\newtheorem{Thm}{Theorem}[section]
\newtheorem{Cor}[Thm]{Corollary}
\newtheorem{Lem}[Thm]{Lemma}
\newtheorem{Prop}[Thm]{Proposition}
\theoremstyle{definition}
\newtheorem{Def}[Thm]{Definition}
\theoremstyle{remark}
\def\ldots{\mathinner{\ldotp\ldotp\ldotp}}
\def\cdots{\mathinner{\cdotp\cdotp\cdotp}}
\def \cal{\mathcal}
\def \diam{\text{diam }}
\def\Lip{\text{Lip}}
\def\eps{\varepsilon}
\def\Mdb{\mathbb M}
\def\Ndb{\mathbb N}
\def\Qdb{\mathbb Q}
\def\Rdb{\mathbb R}
\def\Zdb{\mathbb Z}
\def\eps{\varepsilon}
\def\n{\overline n}
\def\m{\overline m}
\def\Lip{\text{Lip}}
\newcommand{\bib}{\bibitem}
\begin{document}

\title{The non-linear geometry of Banach spaces after Nigel Kalton}

\author{G. Godefroy}

\address{Institut de Mathématiques de Jussieu, 4 place Jussieu, 75005 Paris, France}
\email{godefroy@math.jussieu.fr}

\author{G. Lancien}

\address{Universit\'{e} de Franche-Comt\'{e}, Laboratoire de Math\'{e}matiques UMR 6623,
16 route de Gray, 25030 Besan\c{c}on Cedex, FRANCE.}
\email{gilles.lancien@univ-fcomte.fr}

\author{V. Zizler}

\address{Department of Mathematical Sciences, University of Alberta, Central Academic Building, T6G 2G1 Edmonton, Alberta, Canada}
\email{zizler@math.cas.cz}


\subjclass[2010]{Primary 46B80; Secondary 46B20, 46B85}
\thanks{The second author was partially supported by the P.H.C. Barrande 26516YG}

\maketitle

\centerline{\it Dedicated to the memory of Nigel J. Kalton}

\begin{abstract}

This is a survey of some of the results which were obtained in the last twelve years on the non-linear geometry of Banach spaces. We focus on the contribution of the late Nigel Kalton.

\end{abstract}

\markboth{G. Godefroy, G. Lancien, V. Zizler}{The non-linear geometry of Banach spaces after Nigel Kalton}

\section{Introduction}

Four articles among Nigel Kalton's last papers are devoted to the non-linear geometry of Banach spaces (\cite{K3},
\cite{K4}, \cite{K5}, \cite{K6}). Needless to say, each of these works is important, for the results and also for the open problems it
contains. These articles followed a number of contributions due to Nigel Kalton (sometimes assisted by co-authors) which reshaped
the non-linear geometry of Banach spaces during the last decade. Most of these contributions took place after Benyamini-Lindenstrauss'
authoritative book \cite{BL} was released, and it seems that they are not yet accessible in a unified and organized way. The present
survey adresses this need, in order to facilitate the access to Kalton's results (and related ones) and to help trigger further research in this
widely open field of research. Nigel Kalton cannot be replaced, neither as a friend nor as the giant of mathematics he was. But his wish
certainly was that research should go on,  no matter what. This work is a modest attempt to fulfill this wish, and to honor his memory.

Let us outline the contents of this article. Section 2 gathers several tables, whose purpose is to present in a handy way what is known so far about the stability of several isomorphic classes under non-linear isomorphisms or emdeddings. We hope that this section will provide the reader with an easy access to the state of the art. Of course these tables contain a number of question marks, since our present knowledge is far from complete, even for classical Banach spaces. Section 3  displays several results illustrating the non-trivial fact that asymptotic structures are somewhat invariant under non-linear isomorphisms. Section 4 deals with embeddings of special graphs into Banach spaces, and the use of such embeddings for showing the stability of certain properties under non-linear isomorphisms. The non-separable theory is addressed in Section 5. Non-separable spaces behave quite differently from separable ones and this promptly yields to open problems (and even to undecidable ones in ZFC). Section 6 displays the link between coarse embeddings of discrete groups (more generally of locally finite metric spaces) into the Hilbert space or super-reflexive spaces, and the classification of manifolds up to homotopy equivalence. This section attempts to provide the reader with some feeling on what the Novikov conjecture is about, and some connections between the non-linear geometry of Banach spaces and the ``geometry of groups" in the sense of Gromov. Finally, the last section 7 is devoted to the Lipschitz-free spaces associated with a metric space, their use and their structure (or at least, what is known about it). Sections 2 and 6 contain no proof, but other sections do. These proofs were chosen in order to provide information on the tools we need.

This work is a survey, but it contains some statements (such as Theorem \ref{GP}, Theorem \ref{GKL} or the last Remark in section 5) which were not published  before. Each section contains a number of commented open questions. It is interesting to observe that much of these questions are fairly simple to state. Answering them could be less simple. Our survey demonstrates that non-linear geometry of Banach spaces is a meeting point for a variety of techniques, which have to join forces in order to allow progress. It is our hope that the present work will help stimulate such efforts. We should however make it clear that our outline of Nigel Kalton's last papers does not exhaust the content of these articles. We strongly advise the interested reader to  consult them for her/his own research.

\section{Tables}

This section consists of five tables: Table 1 lists a number of classical spaces and check when these Banach spaces are characterized by their metric or their uniform structure. Table 2 displays what is known about Lipschitz embeddings from a classical Banach space into another, and Table 3 does the same for uniform embeddings. Table 4 investigates the stability of certain isomorphism classes (relevant to a classical property) under Lipschitz or uniform homeomorphism. And finally, Table 5 does the same for non-linear embeddability.

References are given within the tables themselves, but in order to improve readability, we almost always used symbols (whose meaning is explained below) rather than using the numbering of the reference list. Questions marks mean of course that to the best of our knowledge, the corresponding question is still open.

Our notation for Banach spaces is standard. All Banach spaces will be real. From the recent textbooks that may be used in the area we mention \cite{AK0} and \cite{FHHMZ}.

\medskip

$\clubsuit$= Benyamini-Lindenstrauss book \cite{BL}.
\smallskip

$\spadesuit$= Kalton recent papers.
\smallskip

$\sharp$ = Mendel-Naor papers \cite{MN} and \cite{MN2}.
\smallskip


$\Delta$= Godefroy-Kalton-Lancien papers \cite{GKL} and \cite{GKL2001}.
\smallskip

$\square$= Godefroy-Kalton paper on free spaces \cite{GK}.
\smallskip

$\heartsuit$= Johnson-Lindenstrauss-Schechtman paper \cite{JLS}.
\smallskip

$\Diamond$= Textbook \cite{FHHMZ}.

\smallskip

$\diamond$= Basic linear theory or topology.

\smallskip

? = Unknown to the authors.

\vfill\eject

\begin{table}
\caption{Spaces determined by weaker structures} \vspace{0.5cm}
\begin{tabular}{|c|c|c|c|c|c|c|c|c|}
\hline
\hline
& Determined by & Determined by & \\
Space & its Lipschitz & its uniform & \\
& Structure & Structure & \\
\hline

$\ell_2$ &yes& yes&\\

        & & $\clubsuit$ &\\

\hline

$\ell_p$ &yes & yes &\\

$1<p<\infty$ & & & \\

$p\not=2$              & & $\clubsuit$  &     \\

\hline

$\ell_1$ &? & ? &\\

\hline

$c_0$ & yes & ? & \\

      &$\clubsuit$ &  &\\

\hline

$L_p$ & yes &? &\\

$1<p<\infty$ & & & \\

$p\not=2$  &$\clubsuit$ & &\\

\hline

$L_1$ & ? & ? &\\
\hline

$C[0, 1]$ & ? & ? &\\

\hline


$\ell_2(c)$ &yes&yes &  \\

                   & & $\clubsuit$& \\

\hline

$c_0(c)$ & no & no &\\

              &$\clubsuit$& &\\

\hline

$\ell_\infty$ &?& ? & \\


\hline

$\ell_p\oplus \ell_q$ &yes& yes &\\
$1<p<q<\infty$ & & &\\
$p, q\not= 2$  & &$\clubsuit$+$\spadesuit$ &\\

\hline

$\ell_p\oplus \ell_2$ &yes& ? &\\
$1<p<\infty$ & & & \\
$p\not= 2$   & $\clubsuit$& & \\

\hline

$J$ & yes & ?&\\
James' space    &$\clubsuit$+\cite{Cas}& &\\
\hline \hline

\end{tabular}

\end{table}

\begin{table}
\caption{Lipschitz Embeddings from the 1st column into the 1st
row} \vspace{0.5cm}
\begin{tabular}{|c|c|c|c|c|c|c|c|c|c|c|c|}
\hline
\hline
Space &$\ell_2$ & $\ell_q$ & $\ell_1$ & $c_0$ & $L_q$ & $L_1$ &  $C[0, 1]$ &$\ell_2(c)$ & $c_0(c)$ &$\ell_\infty$&\\
 & &$1<q<\infty$ & & &$1<q<\infty$ & & & & & & \\
      &  & $q\not= 2$& &           &$q\not= 2$&  & & & & & \\

\hline

$\ell_2$ &yes &no &no &yes &yes &yes &yes &yes &yes &yes & \\
         & &$\clubsuit$ &$\clubsuit$ &$\clubsuit$ &$\diamond$ &$\diamond$ &$\diamond$ &$\diamond$ & $\clubsuit$
&$\diamond$ & \\

\hline

$\ell_p$ &no &yes iff&no &yes &yes iff &yes iff &yes &no&yes &yes& \\
$1<p<\infty$              &   & $p=q$    &   &    &$q\le p<2$ &$p<2$ & & & &  & \\
         & & & & &or $p=q$ & & & & & & \\
$p\not= 2$           &$\clubsuit$   &$\clubsuit$  & $\clubsuit$ & $\clubsuit$&$\clubsuit$&$\clubsuit$ &$\diamond$ &$\clubsuit$ &$\clubsuit$ &$\diamond$ & \\

\hline

$\ell_1$&no &no &yes &yes &no &yes &yes &no &yes &yes& \\

       &$\clubsuit$  &$\clubsuit$& & $\clubsuit$& $\clubsuit$&$\diamond$ &$\diamond$ &$\clubsuit$ &$\clubsuit$&$\diamond$ & \\

\hline

$c_0$  &no  &no &no &yes &no &no &yes &no&yes &yes& \\

    &$\clubsuit$  &$\clubsuit$&$\clubsuit$ & &$\clubsuit$ &$\clubsuit$&$\diamond$ &$\clubsuit$
&$\diamond$ &$\diamond$ &\\

\hline

$L_p$ & no &no &no &yes &yes iff &yes iff &yes &no &yes &yes& \\
$1<p<\infty$           &    &   &   &    & $q\le p<2 $ & $p<2$              & & & & & \\
                & & & & &or $p=q$  & & & & & & \\

$p\not=2$ &$\clubsuit$ &$\clubsuit$ &$\clubsuit$ &$\clubsuit$ &$\clubsuit$ &$\clubsuit$ &$\diamond$ &$\clubsuit$ &$\clubsuit$ &$\diamond$ & \\

\hline

$L_1$ &no  &no &no &yes &no &yes &yes&no&yes &yes& \\

   &$\clubsuit$  &$\clubsuit$&$\clubsuit$&$\clubsuit$&$\clubsuit$ & &$\diamond$ &$\clubsuit$ &$\clubsuit$ &$\diamond$ & \\

\hline

$C[0, 1]$ &no  &no &no &yes &no &no &yes &no &yes &yes& \\

      &$\clubsuit$   &$\clubsuit$&$\clubsuit$&$\clubsuit$& $\clubsuit$&$\clubsuit$ & &$\clubsuit$ &$\clubsuit$ &$\diamond$ & \\

\hline

$\ell_2(c)$ &no &no &no &no &no &no &no &yes &? &yes & \\

&$\diamond$ &$\diamond$ &$\diamond$ &$\diamond$ &$\diamond$
&$\diamond$ &$\diamond$ & &
&$\spadesuit$ & \\

\hline

$c_0(c)$ &no &no &no &no &no &no &no&no &yes &yes& \\

              &$\diamond$  &$\diamond$ &$\diamond$ &$\diamond$ &$\diamond$ &$\diamond$ &$\diamond$ &$\clubsuit$& &$\spadesuit$ & \\

\hline

$\ell_\infty$&no  &no &no &no &no &no &no &no &? &yes & \\

              &$\diamond$ &$\diamond$ &$\diamond$ &$\diamond$& $\diamond$&$\diamond$ &$\diamond$ &$\clubsuit$ & && \\

\hline \hline

\end{tabular}

\end{table}



\begin{table}
\caption{Uniform Embeddings from the 1st column into the 1st row}
\vspace{0cm}
\begin{tabular}{|c|c|c|c|c|c|c|c|c|c|c|c|}
\hline
\hline
Space &$\ell_2$  & $\ell_q$& $\ell_1$ & $c_0$ & $L_q$ & $L_1$ &  $C[0, 1]$ & $\ell_2(c)$& $c_0(c)$ &$\ell_\infty$&\\
      &          &   $q\in (1,\infty)$& & & $q\in (1,\infty)$ & & & & & & \\

& &$q\not= 2$ & & &$q\not= 2$ & & & & & &\\

\hline

$\ell_2$ &yes &yes &yes &yes&yes &yes &yes &yes &yes&yes &\\

         & &$\clubsuit$ &$\clubsuit$ &$\clubsuit$ &$\diamond$ &$\diamond$ &$\diamond$ &$\diamond$ &$\clubsuit$ &$\diamond$&\\

\hline

$\ell_p$&yes iff &yes if&yes iff&yes &yes iff&yes iff &yes &yes iff &yes &yes & \\

$p\in (1,\infty)$       &$p<2$ &$p\le q$ &$p<2$ & &$p\le q$ &$p<2$ & &$p<2$ & & &  \\
 $p\not= 2$       & & or $p<2$ & & & or $p<2$ & & & & & &   \\
        & &\cite{Alb}+$\clubsuit$ & & & & & & & & & \\
        & & & & & & & & & & & \\

        &&no if & & & & & & & & &    \\
         & &  $p>2$ & & & & & & & & &  \\
         & & and  $q<p$ & & & & & & & & &  \\
      & $\clubsuit$  &$\sharp$   &$\clubsuit$ &$\clubsuit$ &$\sharp$ & $\clubsuit$&$\diamond$&$\clubsuit$&$\clubsuit$ &$\diamond$ & \\

\hline

$\ell_1$& yes&yes&yes &yes &yes&yes &yes &yes &yes &yes &\\

       &$\clubsuit$  & $\clubsuit$& &$\clubsuit$ & $\sharp$&$\diamond$ &$\diamond$ &$\clubsuit$ &$\clubsuit$
&$\diamond$ & \\

\hline

$c_0$&no  &no &no &yes&no &no &yes &no &yes &yes &\\

     &$\clubsuit$&$\spadesuit$ or $\sharp$ &$\clubsuit$ & &$\spadesuit$ or $\sharp$ &$\clubsuit$ & &$\clubsuit$ &$\diamond$ &$\diamond$ &\\

\hline

$L_p$ &yes iff &no if &yes iff&yes&yes iff &yes iff &yes&yes iff &yes &yes &\\
$p\in (1,\infty)$ &$p<2$ &$p>2$ & $p<2$& &$p\le q$&$p<2$ & &$p<2$ & & &   \\
                  & &and $q<p$ & & &or $p<2$ &  & & & & & \\

$p\not=2$  & &$\sharp$ & & & &  & & & & & \\

           & & & & & &  & & & & & \\
           & &yes if & & & &  & & & & & \\
           & &$p<2$ & & & &  & & & & & \\
           & &$\clubsuit$ & & & &  & & & & & \\
           & & & & & &  & & & & & \\
           & &? if & & & &  & & & & & \\
           &$\clubsuit$ &$2<p\le q$ &$\clubsuit$ &$\clubsuit$ &$\sharp$ &$\clubsuit$ &$\diamond$ &$\clubsuit$ &$\clubsuit$ &$\diamond$ &\\

\hline

$L_1$ & yes&yes &yes &yes&yes &yes &yes &yes &yes &yes &\\

     &$\clubsuit$&$\clubsuit$ &$\clubsuit$ &$\clubsuit$ &$\clubsuit$ & &$\diamond$ &$\clubsuit$ &$\clubsuit$ &$\diamond$ &\\

\hline

$C[0, 1]$&no &no &no & yes&no&no &yes &no &yes
 &yes &\\

         &$\clubsuit$&$\spadesuit$ or $\sharp$ &$\clubsuit$ &$\clubsuit$&$\spadesuit$ or $\sharp$ &$\clubsuit$ & &$\clubsuit$&$\clubsuit$&$\diamond$ &\\

\hline

$\ell_2(c)$ &no &no &no &no &no &no &no &yes&? &yes & \\
                   &$\diamond$ &$\diamond$ &$\diamond$ &$\diamond$ &$\diamond$ & &$\diamond$ & & &$\spadesuit$ & \\

\hline

$c_0(c)$ &no &no &no &no &no &no &no &no &yes&yes&  \\

                &$\diamond$ &$\diamond$ &$\diamond$ &$\diamond$ &$\diamond$ &$\diamond$ &$\diamond$ &$\clubsuit$& &$\spadesuit$ & \\

\hline

$\ell_\infty$ &no &no &no &no &no &no &no &no &?&yes &\\

              &$\diamond$ &$\diamond$ &$\diamond$ &$\diamond$ &$\diamond$ &$\diamond$ &$\diamond$ &$\clubsuit$ & & &  \\

\hline \hline

\end{tabular}

\vskip 1cm

A uniform embedding of a Banach space $X$ into a Banach space $Y$ is a uniform \hfill {}
homeomorphism  from $X$ onto a \underline{subset} of $Y$. Let us also mention that the same \hfill {}
table can be written about coarse embeddings (see the definition in section 3). \hfill {}
The set of references to be used is the same except for one paper by Nowak \cite{No}, \hfill {}
where it is proved that for any $p\in [1,\infty)$, $\ell_2$ coarsely embed into $\ell_p$.\hfill{}

\end{table}




\begin{table}
\caption{Stability under type of homeomorphism} \vspace{0.5cm}
\begin{tabular}{|c|c|c|c|}
\hline
\hline

Property & Lipschitz & uniform &\\

         &  & &\\

\hline

hilbertian & yes & yes & \\

           & &$\clubsuit$&\\

\hline

superreflexivity & yes & yes& \\
                 &  & $\clubsuit$&\\

\hline

reflexivity & yes & no &\\

            &$\clubsuit$& $\clubsuit$& \\

\hline

RNP & yes & no &\\

    &$\clubsuit$& $\clubsuit$&\\

\hline

Asplund & yes & no & \\

        &$\clubsuit$&$\clubsuit$&\\

\hline

containment of $\ell_1$ & ? & no& \\
                        & &$\clubsuit$ & \\

\hline

containment of $c_0$ & ? & ? & \\

\hline

BAP & yes & no &\\

    &$\square$&$\heartsuit$ &\\

\hline

Commuting BAP &?& no&\\
& & $\heartsuit$&\\

\hline

Existence of Schauder basis &? & ?&\\

\hline

Existence of M-basis & no & no &\\

                   &  $\Diamond$& & \\

\hline

Existence of unconditional basis &no& no&\\

                               &  $\Diamond$& & \\

\hline

renorming by Frechet smooth norm   & ? & no &\\

                                   &   & $\clubsuit$&\\

\hline

renorming by LUR norm & ? & ?&\\

\hline
renorming by UG norm & no& no &\\

                     &$\Diamond$& &\\

\hline

renorming by WUR norm &? &? &\\

\hline

renorming by AUS norm &yes &yes&\\

                     & &$\Delta$ &\\

\hline \hline

\end{tabular}
\end{table}

\begin{table}
\caption{Properties shared by embedded spaces} \vspace{0.5cm}
\begin{tabular}{|c|c|c|c|}
\hline
\hline

Property & Lipschitz & coarse-Lipschitz &\\

         &  & &\\

\hline

Hilbertian & yes & yes & \\

           &$\clubsuit$  &$\clubsuit$ &\\

\hline

superreflexivity & yes & yes & \\
                 &$\clubsuit$  &$\clubsuit$ &\\

\hline

reflexivity & yes& no &\\

            &$\clubsuit$& $\clubsuit$ & \\

\hline

RNP & yes & no &\\

    &$\clubsuit$&$\clubsuit$ &\\

\hline

Asplund & no & no & \\

        &$\clubsuit$&$\clubsuit$&\\

\hline

renorming by Frechet smooth norm   & no & no &\\

                                   &$\clubsuit$   & $\clubsuit$&\\

\hline

renorming by LUR norm & ? & ?&\\

\hline
renorming by UG norm & no& no &\\

                     &$\Diamond$& &\\

\hline

renorming by WUR norm &? &? &\\

\hline

renorming by AUS norm &no&no&\\
                      &$\clubsuit$& &\\

\hline

reflexive+renorming by AUS norm &yes& ?&\\
                                &$\clubsuit$& &\\

\hline
reflexive+renorming by AUS norm &yes &yes &\\
+renorming by AUC norm & &\cite{BKL} &\\

\hline \hline

\end{tabular}
\end{table}

We say that a Banach space $X$ is determined by its Lipschitz (respectively uniform) structure if a Banach space $Y$ is linearly isomorphic to $X$ whenever $Y$ is Lipschitz homeomorphic (respectively uniformly homeomorphic) to $X$.

${\ }$ \vfill\eject

A Lipschitz embedding of a Banach space $X$ into a Banach space $Y$ is a Lipschitz homeomorphism from $X$ onto a \underline{subset} (in general non linear) of $Y$.

${\ }$ \vfill\eject

${\ }$ \vfill\eject

More precisely, the question addressed is the following. If a Banach space $X$ Lipschitz or coarse-Lipschitz (see definition in section 3) embed into a Banach space $Y$ which has one the properties listed in the first column, does $X$ satisfy the same property?

${\ }$ \vfill\eject


\section{Uniform and asymptotic structures
of Banach spaces}

In this section we will study the stability of the uniform
asymptotic smoothness or uniform asymptotic convexity of a Banach
space under non linear maps such as uniform homeomorphisms and
coarse-Lipschitz embeddings.

\subsection{Notation - Introduction}

\begin{Def} Let $(M,d)$ and $(N,\delta)$ be two metric
spaces and $f:M\to N$ be a mapping.

\smallskip\noindent (a) $f$ is a {\it Lipschitz isomorphism} (or {\it Lipschitz homeomorphism}) if $f$ is a
bijection and $f$ and $f^{-1}$ are Lipschitz. We denote
$M \buildrel {Lip}\over {\sim} N$ and we say that $M$ and $N$ are {\it Lipschitz equivalent}.

\smallskip\noindent (b) $f$ is a {\it uniform homeomorphism} if $f$ is a
bijection and $f$ and $f^{-1}$ are uniformly continuous (we denote
$M \buildrel {UH}\over {\sim} N$).

\smallskip\noindent (c) If $(M,d)$ is unbounded, we define
$$\forall s>0,\ \ Lip_s(f)=\sup\{\frac{\delta((f(x),f(y))}{d(x,y)},\ d(x,y)\ge s\}\ \
{\rm and}\ \ Lip_\infty(f)=\inf_{s>0}Lip_s(f).$$

\noindent $f$ is said to be {\it coarse Lipschitz} if
$Lip_\infty(f)<\infty$.

\smallskip\noindent (d) $f$ is a {\it coarse Lipschitz embedding} if there exist
$A>0, B>0, \theta\geq 0$ such that

$$\forall x,y\in M \ \ d(x,y)\ge \theta \Rightarrow Ad(x,y)\le \delta(f(x),f(y))\le Bd(x,y).$$
We denote $M \buildrel {CL}\over {\hookrightarrow} N$.

More generally, $f$ is a {\it coarse embedding} if there exist two real-valued
functions $\rho_1$ and $\rho_2$ such that $\lim_{t\rightarrow+\infty}\rho_1(t)=+\infty$ and

$$\forall x,y\in M \ \ \rho_1(d(x,y))\le \delta(f(x),f(y))\le \rho_2(d(x,y)).$$

\smallskip\noindent (e) An $(a,b)$-net in the metric space $M$ is a subset $\cal M$ of $M$
such that for every $z\neq z'$ in $\cal M$,  $d(z,z')\ge a$ and for
every $x$ in $M$, $d(x,\cal M)< b$.

\noindent Then a subset $\cal M$ of $M$ is a {\it net} in $M$ if it is an $(a,b)$-net for some $0<a\le b$.

\smallskip\noindent (f) Note that two nets in the same infinite
dimensional Banach space are always Lipschitz equivalent (see
Proposition 10.22 in \cite{BL}).

\noindent Then two infinite dimensional Banach spaces $X$ and $Y$ are said to be {\it net
equivalent} and we denote $X \buildrel {N}\over {\sim} Y$, if
there exist a net $\cal M$ in $M$ and a net $\cal N$ in $N$ such
that $\cal M$ and $\cal N$ are Lipschitz equivalent.
\end{Def}

\noindent{\bf Remark.} It follows easily from the triangle
inequality that a uniformly continuous map defined on a Banach
space is coarse Lipschitz and a uniform homeomorphism between
Banach spaces is a bi-coarse Lipschitz bijection (see Proposition 1.11 in \cite{BL} for details). Therefore if $X$
and $Y$ are uniformly homeomorphic Banach spaces, then they are
net equivalent. It has been proved only recently by Kalton in
\cite{K4} that there exist two net equivalent Banach spaces that
are not uniformly homeomorphic. However the finite dimensional structures of Banach spaces are preserved under net equivalence (see Proposition 10.19 in \cite{BL}, or Theorem \ref{Ribelocal} below).

\medskip The main question addressed in this section is the
problem of the uniqueness of the uniform (or net) structure of a
given Banach space. In other words, whether $X \buildrel {UH}\over
{\sim} Y$ (or $X \buildrel {N}\over {\sim} Y$) implies that $X$ is linearly isomorphic to $Y$ (which we shall denote $X\simeq
Y$)? Even in the separable case, the general answer is negative.
Indeed Ribe \cite{R1} proved the following.

\begin{Thm}{\bf(Ribe 1984)} Let $(p_n)_{n=1}^\infty$ in $(1,+\infty)$ be a
strictly decreasing sequence such that $\lim p_n=1$. Denote
$X=(\sum_{n=1}^\infty L_{p_n})_{\ell_2}$. Then $ X\buildrel
{UH}\over {\sim} X\oplus L_1$.
\end{Thm}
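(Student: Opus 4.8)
The plan is to exhibit Ribe's space $X=(\sum_{n=1}^\infty L_{p_n})_{\ell_2}$ as uniformly homeomorphic to $X\oplus L_1$ by constructing an explicit uniform homeomorphism built from Mazur maps between the summands. The strategy rests on the observation that, since $p_n\to 1$, the space $X$ already contains infinitely many $L_{p_n}$-summands with exponents arbitrarily close to $1$, so adjoining one copy of $L_1$ should be absorbable after a ``shift'' of the summands, in the spirit of a Pe\l czy\'nski-type decomposition carried out at the nonlinear level. First I would recall the Mazur map $M_{p,q}\colon L_p\to L_q$, defined on the unit sphere by $M_{p,q}(f)=\sgn(f)\,|f|^{p/q}$ and extended positively homogeneously, which is a uniform homeomorphism between the unit balls of $L_p$ and $L_q$ (indeed between the whole spaces, with moduli of continuity depending on $p,q$); the key quantitative fact is that these moduli are controlled uniformly when $p,q$ stay in a compact subinterval of $(1,\infty)$, but degenerate as $p,q\to 1$.

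The central step is to produce a uniform homeomorphism $T\colon X\to X\oplus L_1$ summand-by-summand. The naive idea would be to map the $n$-th summand $L_{p_n}$ to $L_{p_{n+1}}$ via $M_{p_n,p_{n+1}}$ and thereby free up the first slot for a copy of $L_1\cong\lim_n L_{p_n}$; however, an infinite composition/shift of Mazur maps will \emph{not} be uniformly continuous globally, because the moduli deteriorate as $p_n\to 1$. The technical heart of Ribe's construction is to arrange the shift so that the $\ell_2$-sum structure damps these deteriorating moduli: one rescales the action on the $n$-th coordinate block so that the non-uniformity incurred on that block is multiplied by a small factor (coming from the $\ell_2$-norm of the tail), and one checks that the resulting map and its inverse are uniformly continuous on bounded sets and, via the coarse-Lipschitz behavior at large scales, globally uniformly continuous. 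I would set this up by writing a point of $X$ as $(f_n)_n$ with $(\|f_n\|_{p_n})_n\in\ell_2$, defining $T$ coordinatewise with a carefully chosen sequence of Mazur maps and truncation radii, and verifying the two-sided uniform continuity estimate
\begin{equation*}
\omega_T(t)\to 0 \quad\text{and}\quad \omega_{T^{-1}}(t)\to 0 \quad\text{as } t\to 0.
\end{equation*}

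The main obstacle, as indicated above, is precisely the failure of uniform equicontinuity of the Mazur maps $M_{p,q}$ as the exponents approach $1$: a direct shift cannot work, and the whole subtlety of Ribe's theorem lies in balancing the $\ell_2$-outer-norm against the degenerating inner moduli so that the aggregate map stays uniformly continuous in both directions. I expect that making this balancing rigorous --- choosing the rescaling weights and interpolating scales so that the contributions of the ``bad'' tail summands are summably small in the relevant modulus --- is where essentially all the work resides, whereas the algebraic identification $L_1\cong(\sum L_{p_n})$-limit and the bookkeeping of the shift are comparatively routine. An alternative, perhaps cleaner, route would be to invoke a general absorption principle: show that $X\cong X\oplus X$ (linearly) together with a nonlinear Pe\l czy\'nski decomposition argument, reducing the claim to the statement that $L_1$ embeds uniformly as a complemented-in-the-uniform-sense piece of $X$; but since the table in Section~2 records that such uniform-embedding questions for $L_1$ are delicate, I would regard the explicit Mazur-map construction as the safer backbone of the proof.
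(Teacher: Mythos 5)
The paper itself gives no proof of this statement: it is quoted from Ribe's paper \cite{R1} (see also the presentation in \cite{BL}), so your proposal must be measured against Ribe's actual argument, and there it contains a fatal inversion. Your ``key quantitative fact'' about Mazur maps is false in both halves. On unit spheres one has, for $1\le p<q$, $\|M_{p,q}f-M_{p,q}g\|_q\le 2\|f-g\|_p^{p/q}$ together with a reverse Lipschitz estimate with constant of order $q/p$; these moduli do \emph{not} degenerate as $p,q\to 1$ --- on the contrary, as $q/p\to 1$ the Mazur maps become nearly isometric, and this equi-uniformity as $p_n\to 1$ is precisely what makes Ribe's construction possible. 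What fails is your parenthetical claim that the positively homogeneous extension is a uniform homeomorphism ``between the whole spaces'': for $u,v\in S_{L_p}$ one has $\|\tilde M(Ru)-\tilde M(Rv)\|_q\approx R^{1-p/q}\,\|Ru-Rv\|_p^{p/q}$, which blows up as $R\to\infty$, so the radial extension is uniformly continuous only on bounded sets. Indeed no global uniform homeomorphism between $L_1$ and $L_p$ ($p>1$) can exist, since $L_p$ is superreflexive, $L_1$ is not, and superreflexivity is invariant under uniform homeomorphisms (Table 4); your summand-by-summand scheme, if it worked as written, would prove far too much. You have therefore located the difficulty in the wrong place: the problem you propose to cure by ``$\ell_2$-damping'' (moduli degenerating as $p_n\to 1$) does not exist, while the real obstruction --- the large-scale failure of the radial Mazur extension --- is untouched by a shift $L_{p_n}\to L_{p_{n+1}}$ and by your rescaling heuristic, which gives no mechanism for absorbing the unbounded space $L_1$.

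Ribe's actual mechanism is localization in \emph{scale}, not repair of the modulus: on the ball of radius $R$ the radial extension of $M_{1,p_n}$ has modulus $\omega(t)\lesssim t+R^{(p_n-1)/p_n}t^{1/p_n}$, so choosing $R_n\to\infty$ with $(p_n-1)\log R_n\to 0$ makes the maps $M_{1,p_n}$ equi-uniformly continuous on balls $B_{L_1}(0,R_n)$ of radii tending to infinity. The heart of the proof is then a delicate gluing, over annuli and between consecutive summands, of these ball-sized pieces, so that every bounded part of $L_1$ is eventually carried into the tail of the $\ell_2$-sum with two-sided moduli surviving the patching; neither the truncation condition $(p_n-1)\log R_n\to 0$ nor any workable gluing appears in your sketch. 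Finally, your fallback route is not available either: there is no ``nonlinear Pe\l czy\'nski decomposition'' --- uniform homeomorphisms do not respect direct-sum decompositions, and the absence of such an absorption principle in the uniform category is exactly why Ribe's example is nontrivial (and why it can destroy reflexivity, which no linear decomposition argument could do).
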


Therefore reflexivity is not preserved under coarse-Lipschitz
embeddings or even uniform homemorphisms. On the other hand, Ribe
\cite{R2} proved that local properties of Banach spaces are
preserved under coarse-Lipschitz embeddings. More precisely.

\begin{Thm}\label{Ribelocal}{\bf(Ribe 1978)} Let $X$ and $Y$ be two Banach spaces
such that $X \buildrel {CL}\over {\hookrightarrow} Y$. Then there
exists a constant $K\ge 1$ such that for any finite dimensional
subspace $E$ of $X$ there is a finite dimensional subspace $F$ of
$Y$ which is $K$-isomorphic to $E$.
\end{Thm}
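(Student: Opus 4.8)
The plan is to upgrade the coarse-Lipschitz embedding to a genuine bi-Lipschitz embedding by rescaling and passing to an ultrapower, then to linearize that embedding by differentiation, and finally to push the resulting finite-dimensional linear copies back down into $Y$. Fix the constants $A,B>0$ and $\theta\ge 0$ from the definition of $X \buildrel {CL}\over {\hookrightarrow} Y$, and normalize $f(0)=0$. For $n\in\N$ set $g_n(x)=\tfrac1n f(nx)$; then $g_n(0)=0$ and, as soon as $n\|x-y\|\ge\theta$, one has $A\|x-y\|\le \|g_n(x)-g_n(y)\|\le B\|x-y\|$. Fixing a non-principal ultrafilter $\mathcal{U}$ on $\N$ and defining $F(x)=[(g_n(x))_n]\in Y_{\mathcal{U}}$ (adjusting the finitely many initial terms to make each sequence bounded, which does not affect the class), the scale constraint $n\|x-y\|\ge\theta$ becomes vacuous along $\mathcal{U}$, so $F:X\to Y_{\mathcal{U}}$ satisfies $A\|x-y\|\le\|F(x)-F(y)\|_{\mathcal{U}}\le B\|x-y\|$ for \emph{all} $x,y\in X$. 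Thus $F$ is a genuine bi-Lipschitz embedding into the Banach space $Y_{\mathcal{U}}$.

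Next I would linearize $F$ on a given finite-dimensional $E\subset X$. Since $Y_{\mathcal{U}}$ need not have the Radon--Nikod\'ym property, one cannot differentiate $F$ directly; instead I view $F$ as a Lipschitz map into the dual space $(Y_{\mathcal{U}})^{**}$ and invoke the Heinrich--Mankiewicz differentiation theorem, which (the domain $E$ being separable) yields a point $x_0\in E$ at which $F$ is weak$^*$-Gâteaux differentiable, the derivative $T=D^{w*}F(x_0):E\to (Y_{\mathcal{U}})^{**}$ being a bounded linear operator. The upper estimate $\|Tu\|\le B\|u\|$ is immediate from weak$^*$ lower semicontinuity of the norm. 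The main obstacle is the lower estimate, since a weak$^*$ limit of difference quotients can lose norm; I would recover it by a one-dimensional argument. Along almost every line in a fixed direction $u$, the Lipschitz curve $t\mapsto F(x_0+tu)$ satisfies the weak$^*$ fundamental theorem of calculus $\langle F(b)-F(a),w\rangle=\int_a^b\langle F'(t),w\rangle\,dt$ for each $w$ in the predual, whence $A|b-a|\,\|u\|\le\|F(b)-F(a)\|\le\int_a^b\|F'(t)\|\,dt$; as this holds for every interval, Lebesgue differentiation forces $\|F'(t)\|\ge A\|u\|$ a.e. By Fubini and a density argument over a countable dense set of directions, one may choose the differentiability point $x_0$ so that $\|Tu\|\ge A\|u\|$ holds for all $u\in E$. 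Hence $T$ is a linear embedding of $E$ into $(Y_{\mathcal{U}})^{**}$ with $A\|u\|\le\|Tu\|\le B\|u\|$.

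It remains to descend to $Y$ itself. The finite-dimensional image $T(E)\subset (Y_{\mathcal{U}})^{**}$ is, by the principle of local reflexivity, $(1+\varepsilon)$-embeddable into $Y_{\mathcal{U}}$, which in turn is finitely representable in $Y$, so $T(E)$ is $(1+\varepsilon)$-embeddable into $Y$. Composing these, for any $\varepsilon>0$ the space $E$ is linearly $K$-isomorphic to a finite-dimensional subspace $F$ of $Y$ with $K\le (B/A)(1+\varepsilon)^2$, a constant depending only on $A$ and $B$ and not on $E$; this establishes the theorem with $K=K(A,B)$. I expect the delicate step to be the differentiation in the absence of the Radon--Nikod\'ym property together with the transfer of the lower bound to a single common point, everything else being a routine assembly of the ultrapower and local-reflexivity machinery.
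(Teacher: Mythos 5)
Your proof is correct, and it is essentially the canonical argument for this theorem: the paper itself gives no proof (it cites Ribe's paper and Proposition 10.19 of Benyamini--Lindenstrauss), and that reference proof proceeds exactly as you do, by rescaling to kill the threshold $\theta$, passing to an ultrapower to get a genuine bi-Lipschitz embedding, linearizing via Heinrich--Mankiewicz weak$^*$-G\^{a}teaux differentiation with the lower bound recovered along lines plus Fubini, and descending by local reflexivity together with the finite representability of $Y_{\mathcal U}$ in $Y$. Note that this is also precisely the machinery the survey deploys in its proof of Theorem \ref{BaLa}, implication (vi)$\Rightarrow$(ii), so your route matches the paper's own toolkit.
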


\noindent{\bf Remark.} If we combine this result with Kwapien's
theorem, we immediately obtain that a Banach space which is net equivalent
to $\ell_2$ is linearly isomorphic to $\ell_2$.

\medskip As announced, we will concentrate on some asymptotic
properties of Banach spaces. So let us give the relevant
definitions.

\begin{Def} Let $(X,\|\ \|)$ be a Banach space and
$t>0$. We denote by $B_X$ the closed unit ball of $X$ and by $S_X$
its unit sphere. For $x\in S_X$ and $Y$ a closed linear subspace
of $X$, we define
$$\overline{\rho}(t,x,Y)=\sup_{y\in S_Y}\|x+t y\|-1\ \ \ \ {\rm and}\ \ \
\ \overline{\delta}(t,x,Y)=\inf_{y\in S_Y}\|x+t y\|-1.$$ Then
$$\overline{\rho}_X(t)=\sup_{x\in S_X}\ \inf_{{\rm
dim}(X/Y)<\infty}\overline{\rho}(t,x,Y)\ \ \ \ {\rm and}\ \ \ \
\overline{\delta}_X(t)=\inf_{x\in S_X}\ \sup_{{\rm
dim}(X/Y)<\infty}\overline{\delta}(t,x,Y).$$ The norm $\|\ \|$ is said to be
{\it asymptotically uniformly smooth} (in short AUS) if
$$\lim_{t \to 0}\frac{\overline{\rho}_X(t)}{t}=0.$$
It is said to be {\it asymptotically uniformly convex} (in short
AUC) if
$$\forall t>0\ \ \ \ \overline{\delta}_X(t)>0.$$
\end{Def}

These moduli have been first introduced by Milman in \cite{M}. We also refer the reader to \cite{JLPS} and \cite{Du} for reviews on these.

\medskip\noindent {\bf Examples.}

\smallskip (1) If $X=(\sum_{n=1}^\infty F_n)_{\ell_p}$, $1\le p<\infty$ and the $F_n$'s are finite
dimensional, then
$\overline{\rho}_X(t)=\overline{\delta}_X(t)=(1+t^p)^{1/p}-1$. Actually, if a separable reflexive Banach space has equivalent norms with moduli of asymptotic convexity and smoothness of power type $p$, then it is isomorphic to a subspace of an $l_p$-sum of finite dimensional spaces \cite{JLPS}.

\smallskip (2) For all $t\in (0,1)$, $\overline{\rho}_{c_0}(t)=0$. And again, if $X$ is separable and
$\overline{\rho}_{X}(t_0)=0$ for some $t_0>0$ then $X$ is isomorphic to a subspace of $c_0$ \cite{GKL}.

\medskip We conclude this introduction by
mentioning the open questions that we will comment on in the course
of this section.

\smallskip\noindent {\bf Problem 1.} Let $1<p<\infty$ and $p\neq 2$. Does
$\ell_p\oplus \ell_2$ have a unique uniform or net structure? Does $L_p$ have a unique uniform or net structure?

\smallskip\noindent {\bf Problem 2.} Assume that $Y$ is a reflexive AUS Banach space and that $X$ is a
Banach space which coarse-Lipschitz embeds into $Y$. Does $X$
admit an equivalent AUS norm?

\smallskip\noindent {\bf Problem 3.}  Assume that $Y$ is an AUC Banach space and that $X$ is a Banach space which coarse-Lipschitz embeds into $Y$. Does $X$
admit an equivalent AUC norm?

\subsection{The approximate midpoints principle}

Given a metric space $X$, two points $x,y \in X$, and $\delta >0$,
the approximate metric midpoint set between $x$ and $y$ with error
$\delta$ is the set:
$$Mid(x,y,\delta)=\left \{z \in X: ~\max\{d(x,z), d(y,z)\} \leq (1+\delta)\frac{d(x,y)}{2} \right \}.$$

The use of approximate metric midpoints in the study of nonlinear
geometry is due to Enflo in an unpublished paper and has since
been used extensively,  e.g. \cite{Bo1}, \cite{Gor} and \cite{JLS}.

The following version of the approximate midpoint Lemma was
formulated in \cite{KR} (see also \cite{BL} Lemma 10.11).

\begin{Prop}\label{midpoint}  Let $X$ be a normed space and suppose $M$ is a metric space.
Let $f:X\to M$ be a coarse Lipschitz map.  If $Lip_\infty(f)>0$
then for any $t,\eps>0$ and any $0<\delta<1$ there exist $x,y\in
X$ with $\|x-y\|>t$ and
$$ f(\mathrm{Mid}(x,y,\delta))\subset \mathrm{Mid}(f(x),f(y),(1+\eps)\delta).$$
\end{Prop}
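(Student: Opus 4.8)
The plan is to exploit the fact that $Lip_s(f)$ is non-increasing in $s$ and decreases to $L:=Lip_\infty(f)$, which is strictly positive by hypothesis. The idea is to choose $x,y$ very far apart and positioned so that $f$ \emph{almost realizes} its asymptotic Lipschitz ratio on the pair $(x,y)$; this simultaneously forces $d(f(x),f(y))$ to be large (the lower bound we need) while the large-scale Lipschitz estimate controls $d(f(x),f(z))$ and $d(f(y),f(z))$ from above.

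First I would record a purely geometric fact about approximate midpoints in a normed space: if $z\in\mathrm{Mid}(x,y,\delta)$ then, by the triangle inequality, $\|x-z\|\ge\|x-y\|-\|y-z\|\ge\frac{1-\delta}{2}\|x-y\|$, and symmetrically $\|y-z\|\ge\frac{1-\delta}{2}\|x-y\|$. Since $\delta<1$ this lower bound has a positive coefficient, so once $\|x-y\|$ is taken large enough, \emph{every} approximate midpoint $z$ lies at distance at least $s^\ast$ from both $x$ and $y$, for any prescribed threshold $s^\ast$. This is exactly what lets us apply the asymptotic (large-scale) Lipschitz estimate to each of $d(f(x),f(z))$ and $d(f(y),f(z))$.

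Now fix $t,\eps>0$ and $0<\delta<1$, and choose $\eta>0$ small enough that $\frac{(1+\eta)(1+\delta)}{1-\eta}\le 1+(1+\eps)\delta$; this is possible because the left side tends to $1+\delta<1+(1+\eps)\delta$ as $\eta\to0$. Using $Lip_s(f)\downarrow L$, pick $s^\ast$ with $Lip_{s^\ast}(f)\le(1+\eta)L$, and then choose $s_1>\max\{t,\frac{2s^\ast}{1-\delta}\}$. Since $Lip_{s_1}(f)\ge L>0$, the definition of $Lip_{s_1}(f)$ as a supremum produces $x,y\in X$ with $\|x-y\|\ge s_1$ and $d(f(x),f(y))\ge(1-\eta)L\,\|x-y\|$. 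By the choice of $s_1$ we have $\|x-y\|>t$ and $\frac{1-\delta}{2}\|x-y\|\ge s^\ast$, so every $z\in\mathrm{Mid}(x,y,\delta)$ satisfies $\|x-z\|,\|y-z\|\ge s^\ast$.

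It then remains to chain the estimates. For $z\in\mathrm{Mid}(x,y,\delta)$, since $\|x-z\|\ge s^\ast$ we get $d(f(x),f(z))\le Lip_{s^\ast}(f)\|x-z\|\le(1+\eta)L(1+\delta)\frac{\|x-y\|}{2}$, while the lower bound gives $\frac{d(f(x),f(y))}{2}\ge(1-\eta)L\frac{\|x-y\|}{2}$. Dividing, $d(f(x),f(z))\le\frac{(1+\eta)(1+\delta)}{1-\eta}\cdot\frac{d(f(x),f(y))}{2}\le\bigl(1+(1+\eps)\delta\bigr)\frac{d(f(x),f(y))}{2}$, and the identical computation bounds $d(f(y),f(z))$; hence $f(z)\in\mathrm{Mid}(f(x),f(y),(1+\eps)\delta)$, as required. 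The one subtle point, and the only place where the hypothesis $Lip_\infty(f)>0$ is used essentially, is securing the \emph{lower} bound $d(f(x),f(y))\ge(1-\eta)L\|x-y\|$: a coarse Lipschitz map supplies upper bounds for free, but the lower bound must be manufactured by selecting the pair $(x,y)$ so as to nearly attain the supremum defining $Lip_{s_1}(f)$, which is legitimate precisely because that supremum stays bounded below by $L>0$.
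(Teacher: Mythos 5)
Your proof is correct, and it is essentially the standard argument: the paper itself does not reproduce a proof but cites \cite{KR} and Lemma 10.11 of \cite{BL}, where the same scheme is used --- choose $\eta$ by continuity, take $s^\ast$ with $Lip_{s^\ast}(f)\le(1+\eta)Lip_\infty(f)$, pick a far-apart pair nearly attaining $Lip_{s_1}(f)\ge Lip_\infty(f)$ for the lower bound, and note that every approximate midpoint is at distance at least $\tfrac{1-\delta}{2}\|x-y\|$ from both endpoints so the large-scale upper estimate applies. Your closing observation that $Lip_\infty(f)>0$ is needed precisely to manufacture the lower bound on $d(f(x),f(y))$ is exactly the right diagnosis.
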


In view of this Proposition, it is natural to study the
approximate metric midpoints in $\ell_p$. This is done in the next
lemma, which is rather elementary and can be found in \cite{KR}.

\begin{Lem}\label{midpointellp} Let $1\le p<\infty$. We denote $(e_i)_{i=1}^\infty$ the canonical basis of
$\ell_p$ and for $N\in \Ndb$, let $E_N$ be the closed linear span of
$\{e_i,\ i>N\}$. Let now $x,y\in \ell_p$, $\delta\in (0,1)$,
$u=\frac{x+y}{2}$ and $v=\frac{x-y}{2}$. Then

\smallskip\noindent (i) There exists $N\in \Ndb$ such that $u+\delta^{1/p}\|v\|B_{E_N} \subset Mid(x,y,\delta).$

\smallskip\noindent (ii) There is a compact subset $K$ of $\ell_p$ such that $Mid(x,y,\delta)\subset
K+2\delta^{1/p}\|v\|B_{\ell_p}$.
\end{Lem}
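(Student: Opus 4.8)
The plan is to treat both parts as elementary norm computations in $\ell_p$, exploiting that the norm is additive in the $p$-th power across disjoint supports. Throughout I write $P_N$ for the canonical projection of $\ell_p$ onto $E_N$ and $Q_N=I-P_N$ for the projection onto $\mathrm{span}\{e_1,\dots,e_N\}$; since $v\in\ell_p$, the tail $\eta_N:=\|P_N v\|$ tends to $0$ as $N\to\infty$, and this is the only place infinite-dimensionality enters. I may assume $v\neq 0$, the case $v=0$ being trivial. Note that $d(x,y)=2\|v\|$, that $x=u+v$ and $y=u-v$, and that membership of $z$ in $\mathrm{Mid}(x,y,\delta)$ is governed by $\|x-z\|=\|v-(z-u)\|$ and $\|y-z\|=\|v+(z-u)\|$.

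For (i), I would take $z=u+\delta^{1/p}\|v\|\,w$ with $w\in B_{E_N}$ and estimate $\|v\mp\delta^{1/p}\|v\|w\|$; by symmetry of $B_{E_N}$ under $w\mapsto-w$ it suffices to bound one sign. Since $w$ is supported on indices $>N$, disjoint-support additivity gives $\|v-\delta^{1/p}\|v\|w\|^p=\|Q_N v\|^p+\|P_N v-\delta^{1/p}\|v\|w\|^p\le(\|v\|^p-\eta_N^p)+(\eta_N+\delta^{1/p}\|v\|)^p$. The right-hand side is continuous in $\eta_N$ and equals $(1+\delta)\|v\|^p$ at $\eta_N=0$, which is $\le(1+\delta)^p\|v\|^p$ because $p\ge 1$. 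Hence for $N$ large enough that $\eta_N$ is suitably small the bound $(1+\delta)^p\|v\|^p$ holds, which is exactly $\|v-\delta^{1/p}\|v\|w\|\le(1+\delta)\|v\|$, so $z\in\mathrm{Mid}(x,y,\delta)$.

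For (ii), I would fix $N$ and prove the uniform tail estimate $\|P_N(z-u)\|\le 2\delta^{1/p}\|v\|$ for every $z\in\mathrm{Mid}(x,y,\delta)$; granting this, the set $K:=u+\overline{\{Q_N(z-u):z\in\mathrm{Mid}(x,y,\delta)\}}$ is a translate of a bounded subset of the finite-dimensional space $Q_N\ell_p$, hence compact, and the decomposition $z=\bigl(u+Q_N(z-u)\bigr)+P_N(z-u)$ exhibits $z\in K+2\delta^{1/p}\|v\|B_{\ell_p}$. To prove the tail estimate, set $z'=z-u$. Since $Q_N(v-z')$ and $Q_N(v+z')$ average to $Q_N v$, the triangle inequality gives $\max\{\|Q_N(v-z')\|,\|Q_N(v+z')\|\}\ge\|Q_N v\|$; I choose the sign for which the $Q_N$-part dominates and apply disjoint-support additivity to the corresponding constraint $\|v\mp z'\|\le(1+\delta)\|v\|$, obtaining $\|P_N z'\|\le\eta_N+\bigl(((1+\delta)^p-1)\|v\|^p+\eta_N^p\bigr)^{1/p}$.

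The main obstacle is making the constant in (ii) sharp, namely $2\delta^{1/p}$ rather than something larger. As $\eta_N\to 0$ the last estimate tends to $((1+\delta)^p-1)^{1/p}\|v\|$, so everything hinges on the inequality $(1+\delta)^p-1\le(2^p-1)\delta$ for $\delta\in(0,1)$, which follows from convexity of $\delta\mapsto(1+\delta)^p$ (the graph lies below its chord on $[0,1]$). Because $(2^p-1)^{1/p}<2$, the limiting value is strictly below $2\delta^{1/p}\|v\|$, leaving a definite gap that absorbs the $\eta_N$-terms once $N$ is large enough. Taking the larger of the two indices produced in (i) and (ii) completes the proof.
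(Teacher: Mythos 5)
The paper gives no proof of this lemma at all: it defers to Kalton--Randrianarivony \cite{KR}, and your argument is exactly the standard one from that source — disjoint-support additivity of $\|\cdot\|^p$ across the projections $Q_N$ and $P_N$, a tail estimate $\|P_N(z-u)\|\le 2\delta^{1/p}\|v\|$ for (ii), and a compact finite-dimensional part. Your proof is correct, and the chord inequality $(1+\delta)^p-1\le(2^p-1)\delta$ with $(2^p-1)^{1/p}<2$ (strict for every $p\ge 1$) is a clean way to see why the constant $2$ in (ii) leaves a genuine gap to absorb the $\eta_N$-terms, so part (ii) is airtight as written.

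One small patch is needed in (i). Your continuity argument requires \emph{strict} inequality at $\eta_N=0$, i.e.\ $(1+\delta)\|v\|^p<(1+\delta)^p\|v\|^p$, which holds only for $p>1$; at $p=1$ the value at $\eta_N=0$ equals the target, and continuity alone does not rule out exceeding the target for small $\eta_N>0$. The statement survives because at $p=1$ your right-hand side $(\|v\|-\eta_N)+(\eta_N+\delta\|v\|)$ is identically $(1+\delta)\|v\|$ — indeed for $p=1$ the triangle inequality gives (i) for \emph{every} $N$ with no tail argument at all — so you should simply treat $p=1$ separately (or observe monotonicity of the bound in $\eta_N$ and dispose of that case directly) rather than invoke continuity uniformly in $p$.
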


We can now combine Proposition \ref{midpoint} and Lemma
\ref{midpointellp} to obtain



\begin{Cor}\label{midpointellqellp} Let $1\le p<q<\infty$.

Then $\ell_q$ does not coarse Lipschitz embed into $\ell_p$.
\end{Cor}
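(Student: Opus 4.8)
The plan is to argue by contradiction using the approximate midpoint principle together with the explicit description of midpoint sets in $\ell_p$. Suppose $f:\ell_q\to\ell_p$ were a coarse Lipschitz embedding, so that there are constants $A,B>0$ and $\theta\ge 0$ with $A\|x-y\|\le \|f(x)-f(y)\|\le B\|x-y\|$ whenever $\|x-y\|\ge\theta$. In particular $Lip_\infty(f)\ge A>0$, so Proposition \ref{midpoint} applies. The strategy is to exploit the mismatch between the ``size'' of midpoint sets in $\ell_q$ and in $\ell_p$: by Lemma \ref{midpointellp}(i) applied in $\ell_q$, a midpoint set $Mid(x,y,\delta)$ in $\ell_q$ contains a large translated ball of a tail subspace $E_N$ of radius proportional to $\delta^{1/q}\|v\|$, whereas by Lemma \ref{midpointellp}(ii) applied in $\ell_p$, the image midpoint set $Mid(f(x),f(y),(1+\eps)\delta)$ is trapped inside a compact set plus a ball of radius proportional to $\delta^{1/p}$ times the relevant half-difference.

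First I would fix $\eps>0$ and $0<\delta<1$, and use Proposition \ref{midpoint} to produce points $x,y\in\ell_q$ with $\|x-y\|>t$ (for $t$ chosen large, at least $\theta$, so the embedding estimates are in force) and such that $f(Mid(x,y,\delta))\subset Mid(f(x),f(y),(1+\eps)\delta)$. Writing $v=\tfrac{x-y}{2}$, Lemma \ref{midpointellp}(i) in $\ell_q$ gives an $N$ with $u+\delta^{1/q}\|v\|B_{E_N}\subset Mid(x,y,\delta)$; this is an infinite-dimensional ball, so it contains an infinite sequence of points, say $\delta^{1/q}\|v\|e_i$ (translated by $u$) for $i>N$, that are mutually at distance $\delta^{1/q}\|v\|\cdot 2^{1/q}$ apart. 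Applying the lower embedding estimate, their $f$-images are mutually separated by at least $A\,\delta^{1/q}\|v\|\cdot 2^{1/q}$, provided this separation exceeds $\theta$. On the other hand, all these images lie in $Mid(f(x),f(y),(1+\eps)\delta)$, which by Lemma \ref{midpointellp}(ii) in $\ell_p$ sits inside $K+2\big((1+\eps)\delta\big)^{1/p}\|\tfrac{f(x)-f(y)}{2}\|B_{\ell_p}$, and the radius here is controlled by $C\,\delta^{1/p}\|v\|$ using the upper embedding estimate $\|f(x)-f(y)\|\le B\|x-y\|$.

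The contradiction then comes from a packing/compactness count. Since $K$ is compact, only finitely many points of an infinite $r$-separated family can lie within $K+\text{(a ball of radius }R)$ once $r$ is large compared to $R$; more precisely, the image family is $A\,2^{1/q}\delta^{1/q}\|v\|$-separated but confined to a set of ``thickness'' $O(\delta^{1/p}\|v\|)$ off a compact core. Because $q>p$, we have $1/q<1/p$, so as $\delta\to 0$ the separation $\delta^{1/q}$ dominates the confinement radius $\delta^{1/p}$ (their ratio $\delta^{1/q-1/p}\to\infty$), forcing infinitely many well-separated points into an essentially compact set, which is impossible. The main obstacle, and the step requiring the most care, is making this counting rigorous: one must choose $\delta$ small enough (and $t$ large enough to keep all relevant distances above $\theta$) so that the tail ball in $E_N$ escapes the compact set $K$, and then invoke total boundedness of $K$ to derive the contradiction. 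I expect the exponent comparison $1/q<1/p$ to be precisely what drives the argument, and the bookkeeping of the constants $A,B,\eps$ and the threshold $\theta$ to be the delicate part.
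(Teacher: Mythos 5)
Your proposal is correct and is exactly the argument the paper intends: the paper gives no written proof, stating only that the corollary follows by combining Proposition \ref{midpoint} with Lemma \ref{midpointellp}, and your execution (an infinite $2^{1/q}\delta^{1/q}\|v\|$-separated family in the tail ball of $\mathrm{Mid}(x,y,\delta)\subset\ell_q$, whose $f$-images are $A$-separated but confined to $K+C\delta^{1/p}\|v\|B_{\ell_p}$, contradicting total boundedness as $\delta\to 0$ since $1/q<1/p$) is precisely that combination, including the correct handling of the threshold $\theta$ by taking $t$ large.
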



\noindent {\bf Remark.} This statement can be found in \cite {KR} but was implicit in \cite{JLS}. It already indicates that,
because of the approximate midpoint principle, some uniform
asymptotic convexity has to be preserved under coarse Lipschitz
embeddings. This idea will be pushed much further in section
\ref{uac}.

\subsection{Gorelik principle and applications}
Our goal is now to study the stability of the uniform asymptotic
smoothness under non linear maps. The first tool that we shall
describe is the Gorelik principle. It was initially devised by
Gorelik in \cite{Gor} to prove that $\ell_p$ is not uniformly
homeomorphic to $L_p$, for $1<p<\infty$. Then it was developed by
Johnson, Lindenstrauss and Schechtman \cite{JLS} to prove that for
$1<p<\infty$, $\ell_p$ has a unique uniform structure. It is
important to underline the fact that the Gorelik principle is only
valid for certain bijections. In fact, the uniqueness of the
uniform structure of $\ell_p$ can be proved without the Gorelik
principle, by using results on the embeddability of special metric graphs as we
shall see in section \ref{uas}. Nevertheless, some other results still need the use of the Gorelik principle. This principle is
usually stated for homeomorphisms with uniformly continuous inverse (see Theorem 10.12 in \cite{BL}). Although it is probably known, we have
not found its version for net equivalences. Note that a Gorelik principle is proved in \cite{BL} (Proposition 10.20) for net equivalences between a Banach space and $\ell_p$. So we will describe here how to obtain a general statement.

\begin{Thm}\label{GP} {(Gorelik Principle.)} Let $X$ and $Y$ be two  Banach spaces. Let $X_0$ be a closed linear subspace
of $X$ of finite codimension. If $X$ and $Y$ are net equivalent, then there are continuous maps $U:X\to Y$ and $V:Y\to X$,
and constants $K,C,\alpha_0> 0$ such that:
$$\forall x\in X\ \ \|VUx-x\|\le C\ \ {\rm and}\ \ \forall y\in Y\
\ \|UVy-y\|\le C\ \ \ {\rm and}$$ for all $\alpha>\alpha_0$ there
is a compact subset $M$ of $Y$ so that
$$\frac{\alpha}{16K}B_Y \subset M+CB_Y+U(\alpha B_{X_0}).$$
\end{Thm}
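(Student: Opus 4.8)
The plan is to manufacture from the net equivalence a pair of everywhere-defined continuous maps $U\colon X\to Y$ and $V\colon Y\to X$ that are coarse Lipschitz and approximately inverse to one another, and then to feed these into the classical topological core of the Gorelik principle. Concretely, let $\mathcal M$ be an $(a,b)$-net in $X$ and $\mathcal N$ a net in $Y$, and let $\phi\colon\mathcal M\to\mathcal N$ be the Lipschitz equivalence between the nets provided by $X\buildrel{N}\over\sim Y$, with $\|\phi(m)-\phi(m')\|\le L\|m-m'\|$ on $\mathcal M$ together with the analogous lower bound. I would extend $\phi$ to all of $X$ by a partition of unity: choose a locally finite partition of unity $(\lambda_m)_{m\in\mathcal M}$ subordinate to the cover $\{x:\|x-m\|<2b\}$ (locally finite because the net is $a$-separated) and set $Ux=\sum_m\lambda_m(x)\phi(m)$. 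Since only net points within $2b$ of $x$ are active, $Ux$ stays within a fixed distance of $\phi(m)$ for any active $m$; this makes $U$ continuous, Lipschitz for large distances, and of bounded oscillation on bounded sets, with $\Lip_\infty(U)\le L$. Symmetrically one builds $V$ from $\phi^{-1}$ and the net $\mathcal N$. Set $K=\max(\Lip_\infty U,\Lip_\infty V)$.

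Next I would record the uniform approximate-inverse estimates $\|VUx-x\|\le C_0$ and $\|UVy-y\|\le C_0$. For $x\in X$ pick an active $m\in\mathcal M$; then $\|x-m\|\le 2b$ and $\|Ux-\phi(m)\|$ is bounded, while $\phi(m)\in\mathcal N$ and $\phi^{-1}(\phi(m))=m$, so $\|V(\phi(m))-m\|$ is bounded by the same mechanism. Because a coarse Lipschitz map has bounded oscillation over any fixed bounded distance, $\|VUx-V(\phi(m))\|$ is bounded as well, and combining these with $\|m-x\|\le 2b$ yields a constant $C_0$. The symmetric computation controls $UV$. The one point requiring care is that $C_0$ be genuinely independent of $x$ and $y$; this is exactly where one uses that $\mathcal M$ and $\mathcal N$ are honest $(a,b)$-nets (uniform separation and density) and not merely abstract bi-Lipschitz partners.

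For the covering assertion, fix $\alpha$ large and $y\in\frac{\alpha}{16K}B_Y$. I would invoke the topological lemma underlying the Gorelik principle (the Brouwer-degree argument combined with Gorelik's almost-isometric positioning of a complement of $X_0$, as in the proof of Theorem 10.12 in \cite{BL}): there is a compact set $A=A_\alpha\subset\frac{\alpha}{2}B_X$, depending on $X_0$ and $\alpha$ but \emph{not} on $y$, such that any continuous $h\colon A\to X$ with $\sup_{g\in A}\|h(g)-g\|$ below the lemma's threshold (a fixed fraction of $\alpha$) must satisfy $h(A)\cap X_0\neq\emptyset$. Apply this to the continuous map $h(g)=V(Ug+y)$. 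Estimating $\|h(g)-g\|\le\|V(Ug+y)-V(Ug)\|+\|VUg-g\|\le K\|y\|+C'+C_0\le\frac{\alpha}{16}+C'+C_0$, the numerical factor $\tfrac1{16}$ is adjusted precisely so that for $\alpha\ge\alpha_0$ this stays below the threshold. Hence there is $g^\ast\in A$ with $x:=V(Ug^\ast+y)\in X_0$, and $\|x\|\le\|g^\ast\|+\|x-g^\ast\|\le\frac{\alpha}{2}+\frac{\alpha}{16}+C'+C_0\le\alpha$, so $x\in\alpha B_{X_0}$. Finally $\|UVz-z\|\le C_0$ with $z=Ug^\ast+y$ gives $\|Ux-(Ug^\ast+y)\|\le C_0$, i.e. $y\in Ux-Ug^\ast+C_0B_Y$. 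Taking $M=-U(A)$, compact since $A$ is compact and $U$ continuous, and $C=C_0$, this reads $\frac{\alpha}{16K}B_Y\subset M+CB_Y+U(\alpha B_{X_0})$.

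The delicate part is not the topological lemma, which is classical, but the passage from a mere net equivalence to the maps $U,V$ carrying a \emph{uniform} approximate-inverse constant $C_0$ together with controlled $\Lip_\infty$: one must verify that the partition-of-unity extension does not degrade the coarse Lipschitz constants and that $C_0$ is uniform over all of $X$ and $Y$. Once that is secured, the whole difference from the homeomorphism case is absorbed into the two $C_0$-errors, and matching the coarse Lipschitz constant $K$ against the scale $\alpha$ in the threshold inequality is what produces the factor $\frac1{16K}$.
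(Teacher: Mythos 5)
Your proposal is correct and follows essentially the same route as the paper's proof: partition-of-unity extensions $U,V$ of the net equivalence, the uniform approximate-inverse estimates $\|VUx-x\|\le C$ and $\|UVy-y\|\le C$, the Brouwer/Bartle--Graves compact-set lemma applied to the map $a\mapsto V(y+Ua)$, and the conclusion via $M=-U(A)$. One small repair: the cover of $X$ by $2b$-balls around an $a$-separated net need not be locally finite in infinite dimensions (e.g.\ $\{e_n\}$ in $c_0$ is $1$-separated and bounded), so the locally finite continuous partition of unity should be obtained from paracompactness of metric spaces rather than from the separation of the net --- which is what the paper implicitly does.
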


\begin{proof} Suppose that $\cal N$ is a net of the Banach space $X$, that
$\cal M$ is a net of the Banach space $Y$ and that $\cal N$ and
$\cal M$ are Lipschitz equivalent. We will assume as we may that
$\cal N$ and $\cal M$ are $(1,\lambda)$-nets for some $\lambda>1$
and that $\cal N= (x_i)_{i\in I}$, $\cal M=(y_i)_{i\in I}$ with
$$\forall i,j\in I\ \ \ K^{-1}\|x_i-x_j\|\le \|y_i-y_j\|\le
K\|x_i-x_j\|,$$  for some $K\ge 1$. Let us denote by $B_E(x,\lambda)$ the open ball of center $x$ and radius $\lambda$ in the Banach space $E$. Then we can find a continuous partition of unity $(f_i)_{i\in I}$ subordinate to $(B_X(x_i,\lambda))_{i\in I}$ and a
continuous partition of unity $(g_i)_{i\in I}$ subordinate to
$(B_Y(y_i,\lambda))_{i\in I}$. Now we set:
$$Ux=\sum_{i\in I} f_i(x)y_i,\ x\in X\ \ {\rm and}\ \
Vy=\sum_{i\in I} g_i(y)x_i,\ y\in Y.$$
The maps $U$ and $V$ are clearly continuous. We shall now state and prove two lemmas about them.

\begin{Lem}\label{L1} (i) Let $x\in X$ be such that $\|x-x_i\|\le r$,
then $\|Ux-y_i\|\le K(\lambda+r)$.

(ii) Let $y\in Y$ be such that $\|y-y_i\|\le r$, then $\|Vy-x_i\|\le
K(\lambda+r)$.
\end{Lem}

\begin{proof} We will only prove (i). If $f_j(x)\neq 0$, then $\|x-x_j\|\le\lambda$. So
$\|x_i-x_j\|\le \lambda+r$ and $\|y_i-y_j\|\le K(\lambda+r)$. We
finish the proof by writing
$$Ux-y_i=\sum_{j,f_j(x)\neq 0} f_j(x)(y_j-y_i).$$
\end{proof}

\begin{Lem}\label{L2} Let $C=(1+K+2K^2)\lambda$. Then
$$\forall x\in X\ \ \|VUx-x\|\le C\ \ {\rm and}\ \ \forall y\in Y\
\ \|UVy-y\|\le C.$$
\end{Lem}

\begin{proof} We only need to prove one inequality. So let $x\in X$ and
pick $i\in I$ such that $\|x-x_i\|\le \lambda$. By the previous
lemma, we have $\|Ux-y_i\|\le 2K\lambda$ and $\|VUx-x_i\|\le
\lambda(K+2K^2)$. Thus $\|VUx-x\|\le (1+K+2K^2)\lambda$.
\end{proof}

We now recall the crucial ingredient in the proof of Gorelik Principle (see step (i) in the proof of Theorem 10.12 in \cite{BL}). This statement relies on Brouwer's fixed point
theorem and on the existence of Bartle-Graves continuous selectors. We refer the reader to \cite{BL} for its proof.

\begin{Prop}\label{G} Let $X_0$ be a finite-codimensional subspace
of $X$. Then, for any $\alpha >0$ there is a compact subset $A$ of
$\frac{\alpha}{2}B_X$ such that for every continuous map
$\phi:A\to X$ satisfying $\|\phi(a)-a\|\le \frac{\alpha}{4}$ for
all $a\in A$, we have that $\phi(A)\cap X_0\neq \emptyset$.
\end{Prop}

We are now ready to finish the proof of Theorem \ref{GP}. Fix $\alpha>0$ such that $\alpha>\max\{8C,96K\lambda\}$ and
$y\in \frac{\alpha}{16K}B_Y$ and define $\phi:A\to X$ by
$\phi(a)=V(y+Ua)$. The map $\phi$ is clearly continuous and we have that for all $a\in A$:
$$\|\phi(a)-a\|\le \|V(y+Ua)-VUa\|+\|VUa-a\|\le
\frac{\alpha}{8}+\|V(y+Ua)-VUa\|.$$ Now, pick $i$ so that
$\|Ua-y_i\|\le\lambda$ and $j$ so that $\|y+Ua-y_j\|\le \lambda$.
Then $\|VUa-x_i\|\le 2K\lambda$ and $\|V(y+ua)-x_j\|\le 2K\lambda$.
But
$$\|x_i-x_j\|\le K\|y_i-y_j\|\le K\|y_i-Ua\|+K\|Ua+y-y_j\|+K\|y\|\le
(2\lambda+\|y\|)K.$$ So
$$\|V(y+Ua)-VUa\|\le
6K\lambda+K\|y\|\le 6K\lambda+\frac{\alpha}{16}\le\frac{\alpha}{8}.$$ Thus
$\|\phi(a)-a\|\le \frac{\alpha}{4}$.

So it follows from Proposition \ref{G} that there exists $a\in A$
such that $\phi(a)\in X_0$. Besides, $\|a\|\le \frac{\alpha}{2}$
and $\|\phi(a)-a\|\le \frac{\alpha}{4}$, so $\phi(a)=V(y+Ua)\in
\alpha B_{X_0}$. But we have that $\|UV(y+Ua)-(y+Ua)\|\le C$. So
if we consider the compact set $M=-U(A)$, we have that $y\in
M+CB_Y+U(\alpha B_{X_0})$. This finishes the proof of Theorem \ref{GP}.

\end{proof}

We can now apply the above Gorelik principle to obtain the net
version of a result appeared in \cite{GKL2001}. This result is new.

\begin{Thm}\label{GKL} Let $X$ and $Y$ be Banach spaces. Assume that $X$ is net equivalent to $Y$ and that $X$ is AUS. Then $Y$
admits an equivalent AUS norm. More precisely, if $\overline
\rho_X(t)\le Ct^p$ for $C>0$ and $p\in (1,\infty)$, then, for any $\eps
>0$, $Y$ admits an equivalent norm $\|\ \|_\eps$ so that
$\overline \rho_{\|\ \|_\eps}(t)\le C_\eps t^{p-\eps}$ for some
$C_\eps>0$.
\end{Thm}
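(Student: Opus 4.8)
The plan is to observe that the only ingredient in the uniform-homeomorphism argument of \cite{GKL2001} that is genuinely special to that setting is the Gorelik principle, and to re-run the whole \cite{GKL2001} scheme with Theorem \ref{GP} substituted for the classical Gorelik principle. Everything else in that proof is insensitive to whether one starts from a uniform homeomorphism or from a net equivalence. We may assume $X$, and hence $Y$ (net equivalence preserves separability, since a countable net exists exactly when the space is separable), is separable, the non-separable case being handled by the corresponding renorming theory.

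First I would pass to the dual and translate asymptotic smoothness into asymptotic convexity: by the standard duality between the two moduli, $\overline\rho_X(t)\le Ct^p$ is equivalent to $X^\ast$ being weak-$\ast$ asymptotically uniformly convex of power type $q=p/(p-1)$, which in turn yields a power-type bound on the weak-$\ast$ Szlenk derivations of $B_{X^\ast}$; in particular $\mathrm{Sz}(X)\le\omega$ (for the unrefined conclusion that $Y$ is merely AUS-renormable, it suffices to transfer $\mathrm{Sz}(X)\le\omega$, which holds because $X$ is AUS). The heart of the matter is to transfer the quantitative derivation estimate from $B_{X^\ast}$ to $B_{Y^\ast}$, and this is exactly where Theorem \ref{GP} enters. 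Given a weak-$\ast$ slice of $B_{Y^\ast}$ one uses the covering $\frac{\alpha}{16K}B_Y\subset M+CB_Y+U(\alpha B_{X_0})$ with $X_0$ of finite codimension (chosen where $X$ is asymptotically flat) and $\alpha$ large. As $\alpha\to\infty$ the compact set $M$ and the fixed ball $CB_Y$ become asymptotically negligible, while $U(\alpha B_{X_0})$ transports the flatness of $X$ on $X_0$ into $Y$; this forces the slices of $B_{Y^\ast}$ to shrink at essentially the rate dictated by $X$, producing a power-type Szlenk bound for $Y$.

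Finally I would feed this bound into the renorming theorem of \cite{GKL2001}: a separable space whose Szlenk index is $\omega$ with a power-type derivation estimate admits, for every $\eps>0$, an equivalent norm with $\overline\rho(t)\le C_\eps t^{p-\eps}$, and the loss of $\eps$ in the exponent is intrinsic to that renorming and is precisely what appears in the conclusion. I expect the transfer step to be the main obstacle: because $U$ and $V$ are non-linear and Theorem \ref{GP} controls $B_Y$ only modulo a compact set and a fixed bounded ball, the argument must be run asymptotically in $\alpha$, and the delicate point is to quantify how the finite-codimensional flatness of $X$ survives the covering and re-emerges as a sharp slice estimate in $Y^\ast$, so that the power type $q$ (equivalently $p$) is preserved up to an arbitrarily small loss.
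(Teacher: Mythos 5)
Your overall plan --- rerun the scheme of \cite{GKL2001} with Theorem \ref{GP} substituted for the classical Gorelik principle --- is exactly the paper's strategy, and several of your side remarks are sound (separability does pass through net equivalence; the $\eps$-loss is indeed unavoidable, by Kalton's example in \cite{K5}). But the step you yourself flag as ``the main obstacle'' is a genuine gap, and the way you propose to organize it would not run as written. You want to apply the covering $\frac{\alpha}{16K}B_Y\subset M+CB_Y+U(\alpha B_{X_0})$ directly to weak$^*$ slices of $B_{Y^*}$ \emph{for the original norm of $Y$} and extract a power-type Szlenk bound. Two things block this. First, since $U$ is nonlinear and controlled only at large scales, the set $U(\alpha B_{X_0})$ by itself tells a single functional $y^*\in Y^*$ nothing: the asymptotic flatness of $X$ becomes legible to $Y^*$ only through large-scale difference quotients $\langle y^*,Ux-Ux'\rangle/\|x-x'\|$, and the finite-codimensional $X_0$ fed into Theorem \ref{GP} must be chosen depending on the pair $(x,x')$ at which such a quotient is nearly maximal. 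Second, one application of Theorem \ref{GP} at scale $\alpha$ yields slice information only at widths tied to that scale; no single scale produces an estimate valid for all $\eps$ simultaneously, which is what a Szlenk power bound requires.

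This is precisely the hole the paper's mechanism fills, and it is not optional packaging: one defines on $Y^*$ the dual norms $|y^*|_k=\sup\{\langle y^*,Ux-Ux'\rangle/\|x-x'\|:\ \|x-x'\|\ge 4^k\}$, checks that they are equivalent for $k$ large (two-sided coarse control of $U$ and $V$), uses Theorem \ref{GP} together with the AUS modulus of $X$ to prove for each $k$ a weak$^*$ convexity estimate with a scale-dependent defect, and then averages over $N$ consecutive scales, $\|y^*\|_N=\frac1N\sum_{k=k_0+1}^{k_0+N}|y^*|_k$, so that the defects become negligible; the predual of $\|\cdot\|_N$ is the desired AUS norm, and the degradation from $t^p$ to $t^{p-\eps}$ arises from this finite multiscale averaging, not from a black-box ``power-type Szlenk implies AUS'' renorming theorem. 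Your route (transfer a Szlenk bound, then renorm via \cite{GKL2001}) could in principle be completed, but only after one has an equivalent norm on $Y$ built from $U$ through which to read the slices --- that is, after performing the construction above, at which point the renorming detour is redundant. A secondary flaw: your non-separable disclaimer is empty, since a net equivalence between non-separable spaces does not restrict to separable pieces and the Szlenk-to-AUS renorming you invoke is a separable theorem, whereas the paper's direct construction requires no separability at all.
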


\noindent  The proof is actually done by constructing a sequence
of dual norms as follows:
$$\forall y^*\in Y^*\ \ \ |y^*|_k=\sup\Big{\{}\frac{\langle y^*,Ux-Ux'\rangle}{\|x-x'\|},\
\|x-x'\|\ge 4^k\Big{\}}.$$ For $k$ large enough they are all equivalent. Then for $N$ large enough
the predual norm of the norm defined by $$\forall y^*\in Y^*\ \
\|y^*\|_N=\frac1N \sum_{k=k_0+1}^{k=k_0+N}|y^*|_k,$$ is the dual of an equivalent AUS norm on $Y$ with the desired modulus of asymptotic smoothness. The proof follows the lines of the argument given in \cite{GKL2001} but uses the above version of Gorelik principle.

\medskip\noindent {\bf Remarks.}

(1) It must be pointed out that the quantitative estimate in the
above result is optimal as it follows from a remarkable example
obtained by Kalton in \cite{K5}.

\smallskip

(2) If the Banach spaces $X$ and $Y$ are Lipschitz equivalent and $X$ is AUS then the norm defined on $Y^*$ by:
$$|y^*|=\sup\Big{\{}\frac{\langle y^*,Ux-Ux'\rangle}{\|x-x'\|},\ x\neq x'\Big{\}}$$
is a dual norm of a norm $|\ |$ on $Y$ such that $\overline \rho_{|\ |}(t)\le c\overline \rho_X(ct)$ for some $c>0$. When $X$ is a subspace of $c_0$, this implies that $Y$ is isomorphic to a subspace of $c_0$. Finally, when $X=c_0$, one gets that $X$ is isomorphic to $c_0$ (see \cite{GKL}).

\smallskip

(3) Other results were originally derived from the
Gorelik principle. We have chosen to present them in the next
subsections as consequences of more recent and possibly more
intuitive graph techniques introduced by Kalton and Randrianarivony
in \cite{KR} and later developed by Kalton in \cite{K6}.

\subsection{Uniform asymptotic smoothness and Kalton-Randrianarivony's
graphs}\label{uas}

The fundamental result of this section is about the minimal
distortion of some special metric graphs into a reflexive and
asymptotically uniformly smooth Banach space. These graphs have
been introduced by Kalton and Randrianarivony in \cite{KR}
and are defined as follows.
\smallskip Let $\Mdb$ be an infinite subset of $\Ndb$ and $k\in \Ndb$ and fix
$a=(a_1,..,a_k)$ a sequence of non zero real numbers. We denote
$$G_k(\Mdb)=\{\n=(n_1,..,n_k),\ n_i\in\Mdb\ \ n_1<..<n_k\}.$$ Then we equip $G_k(\Mdb)$
with the distance
$$\forall \n,\m \in G_k(\Mdb),\ \ d_a(\n,\m)=\sum_{j,\ n_j\neq m_j} |a_j|.$$

Note also that it is easily checked that $\overline \rho _Y$ is an
Orlicz function. Then,  we define the Orlicz sequence space:
$$\ell_{\overline \rho _Y}=\{a\in \Rdb^\Ndb,\ \exists r>0\ \
\sum_{n=1}^\infty \overline \rho _Y\Big(\frac{|a_n|}{r}\Big)<\infty\},$$
equipped with the Luxemburg norm
$$\|a\|_{\overline \rho _Y}=\inf\{r>0,\ \sum_{n=1}^\infty \overline \rho _Y\Big(\frac{|a_n|}{r}\Big)\le 1\}.$$

\begin{Thm}\label{KaRa2}{\bf (Kalton-Randrianarivony 2008)} Let $Y$ be a reflexive
Banach space, $\Mdb$ an infinite subset of $\Ndb$ and
$f:(G_k(\Mdb),d_a)\to Y$ a Lipschitz map. Then for any $\eps>0$,
there exists an infinite subset $\Mdb'$ of $\Mdb$ such that:
$$\diam f(G_k(\Mdb')) \le 2eLip(f)\|a\|_{\overline \rho _Y} +\eps.$$
\end{Thm}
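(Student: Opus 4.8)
The plan is to combine the weak compactness coming from reflexivity with the definition of $\overline\rho_Y$, organised through an iterated weak-limit ``skeleton'' over a carefully chosen subset $\Mdb'$. Normalise so that $\Lip(f)=1$; since $\diam(G_k(\Mdb),d_a)=\sum_{j=1}^k|a_j|<\infty$, the range $f(G_k(\Mdb))$ is bounded, hence relatively weakly compact. \emph{Step 1} is to extract, by a Ramsey / repeated-diagonal argument, an infinite set $\Mdb'\subseteq\Mdb$ together with maps $g_0,g_1,\dots,g_k$ (with $g_k=f$ and $g_0$ a constant vector $y$) such that $g_{j-1}(n_1,\dots,n_{j-1})$ is the weak limit of $g_j(n_1,\dots,n_{j-1},n_j)$ as $n_j\to\infty$ along $\Mdb'$. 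Writing $u_j=g_j-g_{j-1}$, this yields for every $\n\in G_k(\Mdb')$ the telescoping decomposition $f(\n)-y=\sum_{j=1}^k u_j(\n)$, where $\|u_j\|\le|a_j|$ (the Lipschitz estimate together with weak lower semicontinuity of the norm) and, crucially, $u_j$ is weakly null when the $j$-th coordinate runs to infinity with the earlier coordinates frozen.

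The analytic engine, \emph{Step 2}, is the elementary consequence of the definition of $\overline\rho_Y$: if $x\neq0$ and $(v_n)$ is weakly null, then $\limsup_n\|x+v_n\|\le\|x\|\bigl(1+\overline\rho_Y(\limsup_n\|v_n\|/\|x\|)\bigr)$. (A weakly null sequence lies, up to a vanishing error, eventually inside any prescribed finite-codimensional subspace, so one may feed it into the $\inf_{\dim(X/Y)<\infty}$ in the definition, using that $\overline\rho_Y$ is nondecreasing.) Applying this to the partial sums $\sigma_j=\|\sum_{i\le j}u_i\|$, one index at a time from $j=1$ to $k$, produces the recursion $\sigma_j\le\sigma_{j-1}\bigl(1+\overline\rho_Y(|a_j|/\sigma_{j-1})\bigr)$, up to an error that the extraction in Step 1 makes uniformly small over $G_k(\Mdb')$.

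\emph{Step 3} turns this recursion into the Orlicz bound. Set $r=\|a\|_{\overline\rho_Y}$, so that $\sum_{j=1}^k\overline\rho_Y(|a_j|/r)\le1$. The target is $\sigma_k\le e\,r$, which I would read off from $\prod_{j}\bigl(1+\overline\rho_Y(|a_j|/\sigma_{j-1})\bigr)\le\exp\bigl(\sum_j\overline\rho_Y(|a_j|/r)\bigr)\le e$, via $1+t\le e^t$. This requires comparing $\overline\rho_Y(|a_j|/\sigma_{j-1})$ with $\overline\rho_Y(|a_j|/r)$: in the favourable regime $\sigma_{j-1}\ge r$ monotonicity of $\overline\rho_Y$ gives the inequality directly, while in the regime where the accumulated partial sum is still below $r$ one uses the elementary estimates $t-1\le\overline\rho_Y(t)\le t$ (the lower bound holds in every Banach space, by choosing a norming functional for $y$) together with the convexity of $\overline\rho_Y$ to absorb the crossing. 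Granting $\sigma_k\le er+\eps/2$ for all $\n\in G_k(\Mdb')$, the triangle inequality through the common vector $y$ gives $\diam f(G_k(\Mdb'))\le 2er+\eps$, which is the claim after reinstating $\Lip(f)$.

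The main obstacle is Step 1. The delicate point is that for a fixed tuple $\n\in G_k(\Mdb')$ the $j$-th coordinate is confined between its neighbours $n_{j-1}$ and $n_{j+1}$ and cannot genuinely run to infinity; the weak-limit skeleton must therefore be constructed so that the maps $g_j$, the increment bounds $\|u_j\|\le|a_j|$, and the $k$ successive applications of the Step 2 inequality all hold, to within total error $\eps$, simultaneously for \emph{every} $\n\in G_k(\Mdb')$. This is exactly what the repeated extraction (a countable diagonalisation over finite configurations, or an application of Ramsey's theorem to a finite colouring refining the weak topology) is designed to deliver, and it is the step where reflexivity is genuinely used. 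The second, purely technical, difficulty is the constant bookkeeping in Step 3, which must be handled with care to produce the sharp factor $e$ rather than a cruder bound.
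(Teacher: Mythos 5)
Your proposal is correct and takes essentially the same route as the paper, whose own proof is only the indication ``induction on $k$, iterated weak limits of subsequences and a Ramsey argument'': your weak-limit skeleton $g_0,\dots,g_k$ obtained by reflexivity, the Ramsey/diagonal extraction making the modulus inequality hold uniformly over $G_k(\Mdb')$, and the Orlicz bookkeeping via $1+t\le e^t$ are precisely the Kalton--Randrianarivony scheme. The one step you leave compressed, comparing $\overline\rho_Y(|a_j|/\sigma_{j-1})$ with $\overline\rho_Y(|a_j|/r)$ when $\sigma_{j-1}<r$, does go through exactly as you indicate: the elementary bounds $t-1\le \overline\rho_Y(t)\le t$ together with convexity give $\overline\rho_Y'\le 1$, hence $s\mapsto s\bigl(1+\overline\rho_Y(t/s)\bigr)$ is nondecreasing and one may replace $\sigma_{j-1}$ by $\max(\sigma_{j-1},r)$ throughout the recursion.
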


\medskip The proof is done by induction on $k$ and uses iterated weak
limits of subsequences and a Ramsey argument. Such techniques will
be displayed in the next two sections.

\medskip\noindent{\bf Remark.} The reflexivity assumption is
important. Indeed, by Aharoni's Theorem the spaces
$(G_k(\Ndb),d_a)$ Lipschitz embed into $c_0$ with a distortion
controlled by a uniform constant. But $\|a\|_{\overline \rho
_{c_0}}=\|a\|_\infty$, while $\diam G_k(\Mdb')=\|a\|_1$.

\medskip As it is described in \cite{K6} one can deduce the following.

\begin{Cor}\label{KaRa3} Let $X$ be a Banach space and $Y$ be a reflexive Banach space. Assume that $X$
coarse Lipschitz embeds into $Y$. Then there exists $C>0$ such
that for any normalized weakly null sequence $(x_n)_{n=1}^\infty$
in $X$ and any sequence $a=(a_1,..,a_k)$ of non zero real numbers,
there is an infinite subset $\Mdb$ of $\Ndb$ such that:
$$\|\sum_{i=1}^k a_ix_{n_i}\| \le C
\|a\|_{\overline \rho_Y},\ \ {\text for\ every}\ \n \in G_k(\Mdb).$$
\end{Cor}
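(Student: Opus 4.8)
The plan is to feed an appropriately scaled version of the map $\varphi$ realizing the coarse Lipschitz embedding $X \buildrel{CL}\over\hookrightarrow Y$ into the Kalton--Randrianarivony Theorem \ref{KaRa2}, and then to read the conclusion back in $X$ using the weak nullity of $(x_n)$. Fix constants $A,B>0$ and $\theta\ge 0$ so that $A\|u-v\|\le\|\varphi(u)-\varphi(v)\|\le B\|u-v\|$ whenever $\|u-v\|\ge\theta$. Given a normalized weakly null sequence $(x_n)$ and $a=(a_1,\dots,a_k)$, set $a_{\min}=\min_j|a_j|>0$ and, for a scale parameter $t>0$, define $f_t:(G_k(\Mdb),d_a)\to Y$ by $f_t(\n)=\varphi\bigl(t\sum_{i=1}^k a_ix_{n_i}\bigr)$. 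First I would check that the ``support map'' $g(\n)=\sum_i a_ix_{n_i}$ is $2$-Lipschitz from $(G_k(\Mdb),d_a)$ into $X$: indeed $g(\n)-g(\m)=\sum_{j:\,n_j\ne m_j}a_j(x_{n_j}-x_{m_j})$, so normalization gives $\|g(\n)-g(\m)\|\le 2\sum_{j:\,n_j\ne m_j}|a_j|=2d_a(\n,\m)$.

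Since $\varphi$ is only coarse Lipschitz, the composition $f_t=\varphi\circ(tg)$ is a priori not Lipschitz at small scales; the point is that distinct vertices of $G_k(\Mdb)$ are $d_a$-separated by $a_{\min}$. A coarse Lipschitz map satisfies a global affine bound $\|\varphi(u)-\varphi(v)\|\le B\|u-v\|+\kappa$ with $\kappa=4B\theta$ (chaining through an auxiliary point when $\|u-v\|<\theta$), so $\|f_t(\n)-f_t(\m)\|\le 2Bt\,d_a(\n,\m)+\kappa\le\bigl(2Bt+\kappa/a_{\min}\bigr)d_a(\n,\m)$, whence $Lip(f_t)\le 2Bt+\kappa/a_{\min}$. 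Theorem \ref{KaRa2} then produces, for each $\eps>0$, an infinite $\Mdb'\subset\Mdb$ with $\diam f_t(G_k(\Mdb'))\le 2e\,(2Bt+\kappa/a_{\min})\|a\|_{\overline{\rho}_Y}+\eps$.

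The crucial and most delicate step is to convert this diameter estimate, which only controls the \emph{differences} $g(\n)-g(\m)$, into the desired bound on the single vector $g(\n)$. Here I would exploit weak nullity: fix $\n\in G_k(\Mdb')$ and choose $\m^{(l)}\in G_k(\Mdb')$ with all coordinates larger than those of $\n$ and tending to $+\infty$. Then $g(\m^{(l)})\rightharpoonup 0$, so that $t\,g(\n)-t\,g(\m^{(l)})\rightharpoonup t\,g(\n)$, and weak lower semicontinuity of the norm gives $t\|g(\n)\|\le\liminf_l\|t\,g(\n)-t\,g(\m^{(l)})\|$. Provided $t\|g(\n)\|>\theta$ (otherwise $\|g(\n)\|\le\theta/t$ is already negligible once $t$ is large), the lower coarse Lipschitz estimate applies to the pairs $(\n,\m^{(l)})$ for large $l$ and yields $\|t\,g(\n)-t\,g(\m^{(l)})\|\le A^{-1}\|f_t(\n)-f_t(\m^{(l)})\|\le A^{-1}\diam f_t(G_k(\Mdb'))$.

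Combining these estimates and dividing by $t$ gives, for every $\n\in G_k(\Mdb')$, the bound $\|\sum_i a_ix_{n_i}\|\le A^{-1}\bigl(2e(2B+\kappa/(t\,a_{\min}))\|a\|_{\overline{\rho}_Y}+\eps/t\bigr)$. Since $\Mdb'$ has already been fixed and the receding sequences $\m^{(l)}$ live inside it, this holds for all $\n\in G_k(\Mdb')$ simultaneously, so I take $\Mdb=\Mdb'$. Choosing $t$ large and $\eps$ small before selecting $\Mdb'$ absorbs the defect terms and produces the uniform constant $C=4eB/A$, depending only on the embedding and not on $a$ or $(x_n)$. The main obstacle is precisely this transfer step: the diameter alone says nothing about $\|g(\n)\|$, and it is the weak-null hypothesis (together with the reflexivity of $Y$, used inside Theorem \ref{KaRa2}) that lets a ``vanishing at infinity'' comparison point $\m^{(l)}$ turn control of differences into control of norms.
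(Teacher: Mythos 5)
Your proof is correct and takes essentially the same route as the paper, whose (very terse) proof applies Theorem \ref{KaRa2} to $f=g\circ h$ with $h(\n)=\lambda\sum_{i=1}^k a_ix_{n_i}$ for large $\lambda$ --- your scale $t$ plays the role of $\lambda$, and your Lipschitz estimate via the separation $d_a\ge a_{\min}$ and your weak-null $\liminf$ transfer step are exactly the details the paper leaves implicit. The only trivial slip is the final constant: your defect terms yield any $C>4eB/A$ rather than $C=4eB/A$ exactly, which is harmless since the statement only asserts the existence of some uniform $C$.
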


\begin{proof} The result is obtained by applying Theorem
\ref{KaRa2} to $f=g\circ h$, where $g$ is a coarse-Lipschitz
embedding from $X$ into $Y$ and $h:(G_k(\Ndb),d_a)\to X$ is
defined by $h(\n)=\lambda \sum_{i=1}^k a_ix_{n_i}$ for some large
enough $\lambda>0$.
\end{proof}

In fact, this is stated in \cite{K6} in the following more
abstract way.

\begin{Cor} Let $X$ be a Banach space and $Y$ be a reflexive Banach space. Assume that $X$
coarse Lipschitz embeds into $Y$. Then there exists $C>0$ such
that for any spreading model $(e_i)_i$ of a normalized weakly null
sequence in $X$ (whose norm is denoted $\|\ \|_S$) and any finitely supported sequence $a=(a_i)$ in
$\Rdb$:
$$\|\sum a_i e_i\|_S\le C\|a\|_{\overline \rho_Y}.$$
\end{Cor}

\subsection{Applications}

The first consequence is the following.

\begin{Cor}\label{graphellqellp} Let $1\le q\neq p<\infty.$

Then $\ell_q$ does not coarse Lipschitz embed into $\ell_p$.
\end{Cor}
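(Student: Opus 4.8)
The plan is to split the range $1\le q\neq p<\infty$ into the two regimes $q>p$ and $q<p$ and to attack them with the two different obstructions developed above. When $q>p$ there is nothing new to do: this is precisely Corollary \ref{midpointellqellp}, which came from the approximate midpoint principle. Hence I only need to treat $q<p$, and in that regime $p>1$, so that the target $Y=\ell_p$ is reflexive and the graph machinery of Corollary \ref{KaRa3} is available.

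Suppose, for contradiction, that $\ell_q$ coarse Lipschitz embeds into $\ell_p$ with $1<q<p<\infty$. The canonical basis $(e_n)$ of $\ell_q$ is normalized and weakly null (as $\ell_q$ is reflexive), so Corollary \ref{KaRa3} yields a constant $C>0$ such that for every finite block $a=(a_1,\dots,a_k)$ there is an infinite $\Mdb\subset\Ndb$ with $\|\sum_{i=1}^k a_i e_{n_i}\|_q\le C\|a\|_{\overline\rho_{\ell_p}}$ for all $\n\in G_k(\Mdb)$. I would test this on $a=(1,\dots,1)$ ($k$ ones). Since the $e_{n_i}$ have disjoint supports, the left-hand side equals $k^{1/q}$. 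For the right-hand side I use $\overline\rho_{\ell_p}(t)=(1+t^p)^{1/p}-1$: taking $r=p^{-1/p}k^{1/p}$ one gets $(1/r)^p=p/k$, and from $(1+x)^{1/p}-1\le x/p$ it follows that $\overline\rho_{\ell_p}(1/r)\le 1/k$, so $\sum_{n=1}^k\overline\rho_{\ell_p}(1/r)\le 1$ and therefore $\|a\|_{\overline\rho_{\ell_p}}\le p^{-1/p}k^{1/p}$ (equivalently, one may just quote that $\ell_{\overline\rho_{\ell_p}}=\ell_p$ up to equivalence of norms). Combining the two sides gives $k^{1/q}\le Cp^{-1/p}k^{1/p}$ for all $k$, i.e. $k^{1/q-1/p}$ stays bounded; but $q<p$ forces $1/q-1/p>0$, so letting $k\to\infty$ produces the contradiction.

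The remaining case is $q=1<p$, where the argument above fails: $\ell_1$ has the Schur property and thus admits no normalized weakly null sequence, so Corollary \ref{KaRa3} is inapplicable. Here I would instead invoke the preservation of local structure. By Ribe's theorem (Theorem \ref{Ribelocal}), a coarse Lipschitz embedding of $\ell_1$ into $\ell_p$ would produce a constant $K$ such that every finite-dimensional subspace of $\ell_1$, in particular each $\ell_1^n$, is $K$-isomorphic to a subspace of $\ell_p$. This is impossible for $p>1$, since $\ell_p$ has nontrivial type $\min(p,2)>1$ while the type-$\min(p,2)$ constants of the spaces $\ell_1^n$ tend to infinity with $n$; hence the $\ell_1^n$ cannot embed into $\ell_p$ with uniformly bounded distortion. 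This contradiction completes the proof.

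I expect the genuine difficulty to reside entirely in the tools already established rather than in this deduction: Corollary \ref{KaRa3} rests on the Kalton--Randrianarivony graph estimate (Theorem \ref{KaRa2}), and once it is granted the $q<p$ computation is routine. The only real subtlety is organizational, namely recognizing that the inequalities $q>p$ and $q<p$ demand genuinely different obstructions, and that the boundary value $q=1$ slips past the weakly-null machinery and must be recovered through Ribe's theorem and the local (type) structure of $\ell_p$.
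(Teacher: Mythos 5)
Your proof is correct and follows the same bifurcation as the paper: the case $q>p$ is dispatched by the midpoint obstruction of Corollary \ref{midpointellqellp}, and the case $q<p$ by the Kalton--Randrianarivony graph estimate through Corollary \ref{KaRa3}; your computation bounding $\|(1,\dots,1)\|_{\overline\rho_{\ell_p}}$ by $p^{-1/p}k^{1/p}$ (equivalently, the observation that $\ell_{\overline\rho_{\ell_p}}=\ell_p$ up to equivalent norms) is exactly the ``immediate'' deduction the paper leaves unwritten. The one genuine difference is your treatment of $q=1$: the paper's proof says only that the case $q<p$ ``follows immediately from the previous results,'' which, read literally, glosses over the fact that Corollary \ref{KaRa3} requires a normalized weakly null sequence, and $\ell_1$, having the Schur property, admits none. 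Your patch via Ribe's theorem (Theorem \ref{Ribelocal}) together with the divergence of the type constants of $\ell_1^n$ inside a space of type $\min(p,2)>1$ is a correct and self-contained way to close this boundary case; an alternative within the paper's toolkit would be Theorem \ref{reflexive} (coarse Lipschitz embeddability into the reflexive AUS space $\ell_p$ forces reflexivity, contradicting non-reflexivity of $\ell_1$), but since that theorem is only stated later in the section, your local argument is the more appropriate fix at this point of the exposition. In short: same route as the paper, carried out with more care at the endpoint $q=1$.
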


\begin{proof} If $q<p$, this follows immediately from the previous
results.

\noindent If $q>p$, this is Corollary \ref{midpointellqellp}.
\end{proof}

Then we can deduce the following result, proven in \cite{JLS} under the
assumption of uniform equivalence.

\begin{Thm}\label{JLS96} {\bf(Johnson, Lindenstrauss and Schechtman 1996)}

\noindent Let $1<p<\infty$ and $X$ a Banach space such that
$X\buildrel {N}\over {\sim} \ell_p$. Then $X\simeq \ell_p$.
\end{Thm}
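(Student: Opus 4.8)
The plan is to squeeze out of the net equivalence two matching asymptotic $\ell_p$ estimates for weakly null sequences in $X$ --- an upper one coming from the smoothness side and a lower one coming from the convexity side --- and then to upgrade these \emph{sequential} estimates to a genuine linear isomorphism. Note first that net equivalence is symmetric, so it provides coarse Lipschitz embeddings in both directions, $X\buildrel{CL}\over\hookrightarrow\ell_p$ and $\ell_p\buildrel{CL}\over\hookrightarrow X$, and that $X$ is separable because $\ell_p$ is. For the upper estimate I would apply Theorem \ref{GKL} with $\ell_p$ playing the role of the AUS space: since $\overline\rho_{\ell_p}(t)=(1+t^p)^{1/p}-1\le Ct^p$, it follows that $X$ carries an equivalent AUS norm with $\overline\rho_X(t)\le C_\eps t^{p-\eps}$. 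Moreover, applying Corollary \ref{KaRa3} to $X\buildrel{CL}\over\hookrightarrow\ell_p$ and observing that $\|a\|_{\overline\rho_{\ell_p}}$ is equivalent to $\|a\|_p$, one obtains a constant $C$ such that every normalized weakly null sequence in $X$ has a subsequence satisfying the upper $\ell_p$ estimate $\|\sum_i a_i x_{n_i}\|\le C\|a\|_p$.

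The lower estimate is the delicate point, since there is no general transfer principle for asymptotic convexity (this is precisely Problem 3), so here I would exploit the concrete midpoint geometry of $\ell_p$. Applying Proposition \ref{midpoint} to the embedding $g:X\buildrel{CL}\over\hookrightarrow\ell_p$ sends approximate midpoint sets of $X$ into approximate midpoint sets of $\ell_p$, which by Lemma \ref{midpointellp}(ii) lie in a compact set plus a ball of radius comparable to $\delta^{1/p}\|g(x)-g(y)\|$. Pulling this back through the lower coarse Lipschitz bound of $g$ forces the approximate midpoint sets of $X$ to be correspondingly small; this is exactly the mechanism that rules out $\ell_q\buildrel{CL}\over\hookrightarrow\ell_p$ for $q>p$ in Corollary \ref{midpointellqellp}, and run quantitatively it shows that $X$ is AUC of power type $p+\eps$, equivalently that every normalized weakly null sequence in $X$ has a subsequence satisfying a lower $\ell_p$ estimate $c\|a\|_p\le\|\sum_i a_i x_{n_i}\|$.

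With both estimates available, $X$ admits an equivalent norm that is simultaneously AUS and AUC of power type arbitrarily close to $p$; in particular $X$ is both Asplund and has the RNP, hence reflexive. By the characterisation recorded in Example~(1) after the definition of the asymptotic moduli, such a separable reflexive space embeds isomorphically into an $\ell_p$-sum of finite dimensional spaces, and by Ribe's Theorem \ref{Ribelocal} these summands are uniformly isomorphic to $\ell_p^{k_n}$, so that $X$ is isomorphic to a subspace of $\ell_p$. To finish I would use the two-sided $\ell_p$ estimates together with reflexivity to run a blocking argument producing an $\ell_p$ finite dimensional decomposition (FDD) of $X$; combined with $\ell_p\buildrel{CL}\over\hookrightarrow X$ this realizes $\ell_p$ as a complemented subspace of $X$ and $X$ as a complemented subspace of an $\ell_p$-space, and Pe\l czy\'nski's decomposition (every infinite dimensional complemented subspace of $\ell_p$ is isomorphic to $\ell_p$) then yields $X\simeq\ell_p$.

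The hard part is this last passage from the sequential estimates to a global structure. The midpoint and graph arguments only control one weakly null sequence at a time, whereas $X$ is not even known a priori to possess a Schauder basis (cf. Table 4), so constructing the finite dimensional decomposition and the complementation demands a careful gliding hump / skipped blocking argument on top of reflexivity. Establishing the lower estimate of the second paragraph is the other genuinely non-formal step, precisely because it cannot be quoted from a general theorem and must be extracted by hand from the approximate midpoint geometry of $\ell_p$.
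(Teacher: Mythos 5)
Your upper estimate (Theorem \ref{GKL} plus Corollary \ref{KaRa3} applied to $X\buildrel{CL}\over\hookrightarrow\ell_p$) is fine, but the two steps you yourself flag as delicate are genuine gaps, not technicalities. For the lower estimate: Proposition \ref{midpoint} transports approximate midpoint sets of $X$ into those of $\ell_p$, and Lemma \ref{midpointellp}(ii) makes the latter small, but this only constrains the image of sets you cannot describe. The contradiction in Corollary \ref{midpointellqellp} works because Lemma \ref{midpointellp}(i) gives an explicit \emph{large} subset (a ball of a finite-codimensional subspace) inside $Mid(x,y,\delta)$ of the \emph{domain} $\ell_q$; for an abstract $X$ no such lower description of its midpoint sets is available, so ``running the argument quantitatively'' does not produce an AUC renorming or subsequential lower $\ell_p$ estimates. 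Extracting convexity from coarse Lipschitz embeddings is precisely the content of Kalton's breakthrough \cite{K6} (the $\tilde\delta_Y$ theorem of Section 3.6, which required an Orlicz-space detour and even then yields only spreading-model inequalities for the vector $e_1+\cdots+e_k$, not estimates for arbitrary coefficients); the renorming question is the paper's open Problem 3, and the statement ``$X$ coarse Lipschitz embeds into $\ell_p$ implies $X$ is isomorphic to a subspace of $\ell_p$'' is Kalton's 2010 theorem, far beyond a paragraph. Second, even granting two-sided subsequential estimates, your endgame fails: at best $X$ embeds into an $\ell_p$-sum of finite-dimensional spaces, and a subspace of $\ell_p$ need not be isomorphic to $\ell_p$ when $p\neq 2$; the complementation needed for Pe\l czy\'nski's decomposition is nowhere established, since no mechanism in the paper (or anywhere) turns the coarse Lipschitz embedding $\ell_p\buildrel{CL}\over\hookrightarrow X$ into a linear projection onto a copy of $\ell_p$.

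The paper's proof obtains the global linear structure by a different and much shorter route, which supplies exactly the complementation your sketch lacks. Since nets of $X$ and $\ell_p$ are Lipschitz equivalent, the ultrapowers $X_{\mathcal U}$ and $(\ell_p)_{\mathcal U}$ are Lipschitz \emph{isomorphic}, and the classical differentiation theory for Lipschitz bijections (Heinrich--Mankiewicz \cite{HM}) yields that $X$ is isomorphic to a complemented subspace of $L_p[0,1]$. After that the only nonlinear input is soft: assuming $p\neq 2$ (the case $p=2$ being Kwapie\'n's theorem combined with Theorem \ref{Ribelocal}), Corollary \ref{graphellqellp} shows $\ell_2$ does not coarse Lipschitz embed into $\ell_p$, so $X$ contains no isomorphic copy of $\ell_2$, and the Johnson--Odell theorem \cite{JO} (an infinite-dimensional complemented subspace of $L_p$ not containing $\ell_2$ is isomorphic to $\ell_p$) finishes the proof. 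The moral is that the bijectivity of the net equivalence, exploited through ultrapowers, does the heavy lifting that your attempt to rebuild the linear structure from asymptotic sequential estimates cannot.
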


\begin{proof}  Suppose that $X\buildrel {N}\over {\sim} \ell_p$, with $1<p<\infty$. We
may assume that $p\neq 2$. Then the ultra-products $X_{\cal U}$
and $(\ell_p)_{\cal U}$ are Lipschitz isomorphic and it follows
from the classical Lipschitz theory that $X$ is isomorphic to a
complemented subspace of $L_p=L_p([0,1])$. Now, it follows from
Corollary  \ref{graphellqellp} that $X$ does not contain any
isomorphic copy of $\ell_2$. Then we can conclude with a classical
result of Johnson and Odell \cite{JO} which asserts that any
infinite dimensional complemented subspace of $L_p$ that does not
contain any isomorphic copy of $\ell_2$ is isomorphic to $\ell_p$.
\end{proof}

\noindent{\bf Remark.} These linear arguments are taken from
\cite{JLS}. Note that the key step was to show that that $X$ does
not contain any isomorphic copy of $\ell_2$. In the original paper
\cite{JLS} this relied on the Gorelik principle. We have chosen to
present here a proof using this graph argument. In fact, more can
be deduced from this technique.

\begin{Cor} Let $1\le p<q<\infty.$ and $r\ge 1$ such that $r\notin \{p,q\}$.

Then $\ell_r$ does not coarse Lipschitz embed into $\ell_p\oplus
\ell_q$.
\end{Cor}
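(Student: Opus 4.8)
The plan is to set $Y=\ell_p\oplus\ell_q$ and argue according to the position of $r$, reducing wherever possible to the single‑target statements already proved. First observe the elementary fact that the asymptotic smoothness of a direct sum is governed by its \emph{least} smooth summand, so that for a suitable equivalent norm $\overline\rho_Y(t)\le Ct^p$; consequently the Luxemburg norm satisfies $\|a\|_{\overline\rho_Y}\le C^{1/p}\|a\|_p$. If $r<p$ (which forces $p>1$ since $r\ge1$), the unit vector basis $(e_n)$ of $\ell_r$ is normalized and weakly null and $Y$ is reflexive, so Corollary \ref{KaRa3} applied to this basis with $a=(1,\dots,1)$ gives, on a suitable $\Mdb$, $k^{1/r}=\|\sum_{i\le k}e_{n_i}\|_r\le C\|a\|_{\overline\rho_Y}\le C'k^{1/p}$, absurd for large $k$. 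The borderline value $r=1<p$ escapes this argument, but there $Y$ is superreflexive while $\ell_1$ is not, so the stability of superreflexivity under coarse Lipschitz embeddings (Table 5, $\clubsuit$) disposes of it.

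For $r>q$ I would run the approximate midpoint scheme exactly as in Corollary \ref{midpointellqellp}. The only new ingredient is the analogue of Lemma \ref{midpointellp}(ii) for the sum: equipping $Y$ with the $\ell_q$‑sum norm $\|(u,v)\|=(\|u\|_p^q+\|v\|_q^q)^{1/q}$, an elementary computation shows that for all $x,y\in Y$ one has $Mid(x,y,\delta)\subset K+C\delta^{1/q}\|v\|B_Y$ with $K$ compact; the point of the $\ell_q$‑sum is that the non‑dominant summand is confined at scale $\delta^{1/q}$ rather than left free, so the smallness scale is uniformly $\delta^{1/q}$. Since the $\ell_r$‑midpoint sets contain balls $u+\delta^{1/r}\|v\|B_{E_N}$ in tail subspaces (Lemma \ref{midpointellp}(i)) and $\delta^{1/r}>\delta^{1/q}$ when $r>q$, Proposition \ref{midpoint} together with the finite covering number of $K$ yields the usual separation contradiction.

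The genuinely hard case is $p<r<q$, which I expect to be the main obstacle. Here the graph estimate only delivers $r\ge p$ and the midpoint estimate only delivers $r\le q$, so neither excludes the open interval; in fact no estimate comparing $\|\sum a_ie_{n_i}\|$ with an Orlicz norm can, since $\|a\|_q\le\|a\|_r\le\|a\|_p$ is consistent for every $p<r<q$. My plan is to combine the two principles. Use Proposition \ref{midpoint} to produce approximate‑midpoint balls $u+\rho B_{E_N}$ in $\ell_r$ with $\rho=\delta^{1/r}\|v\|$ and $\|v\|$ as large as needed; transport a rescaled graph $h(\n)=u+\sigma\sum_{i\le k}e_{n_i}$ (with $\sigma k^{1/r}=\rho$) into $Y$ by the embedding $g=(g_p,g_q)$; and apply Theorem \ref{KaRa2} to the reflexive coordinate $g_q=P_q\circ g$. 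Because $r<q$, the diameter bound $2e\,\text{Lip}(g_q)\,\sigma k^{1/q}=2e\,\text{Lip}(g_q)\,\rho\,k^{1/q-1/r}$ is $o(\rho)$ as $k\to\infty$, so on a subgraph $G_k(\Mdb')$ the $\ell_q$‑coordinate oscillates by strictly less than the metric lower bound; hence on disjoint $k$‑tuples the lower bound of the embedding must be carried entirely by $g_p$.

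The remaining step, and the crux of the difficulty, is to confront this with the fact that $\ell_r$ does not coarse Lipschitz embed into $\ell_p$ (Corollary \ref{graphellqellp}, the case $r>p$). The clean sub‑case is when $g(x)-g(y)$ concentrates in the $\ell_p$‑summand: then the $\ell_p$‑part of the target midpoint set is confined at the finer scale $\delta^{1/p}<\delta^{1/r}$, and after a Ramsey extraction removing the compact part one finds too many $\delta^{1/r}\|v\|$‑separated points inside a set of covering radius $\delta^{1/p}\|v\|$, a contradiction. The obstacle is that Proposition \ref{midpoint} does not let one prescribe the configuration, and when $g(x)-g(y)$ concentrates in the $\ell_q$‑summand the $\ell_p$‑confinement degrades to scale $\delta^{1/q}>\delta^{1/r}$ and the separation argument collapses. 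Overcoming this is precisely where the full strength of the graph technique of \cite{KR} and \cite{K6} is required: one must iterate the midpoint/graph selection through a Ramsey scheme that stabilises which summand carries the lower bound, thereby reducing the problem, on a suitable infinite subset, to the already‑settled non‑embeddings $\ell_r\not\hookrightarrow\ell_p$ and $\ell_r\not\hookrightarrow\ell_q$.
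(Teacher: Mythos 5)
Your outer cases are sound, and in one respect more careful than the paper's own sketch: for $r<p$ the Orlicz-norm computation via Corollary \ref{KaRa3} is exactly the intended argument, and your superreflexivity patch for $r=1$ addresses a real point (by the Schur property $\ell_1$ has no normalized weakly null sequence, so Corollary \ref{KaRa3} is vacuous there -- the paper's ``it follows immediately from Corollary \ref{KaRa3}'' glosses over this); likewise your $\ell_q$-sum midpoint lemma for $Y=\ell_p\oplus\ell_q$ is correct and settles $r>q$ parallel to Corollary \ref{midpointellqellp}. But in the middle case $p<r<q$ -- the heart of the corollary -- there is a genuine gap, which you yourself flag: the appeal to ``a Ramsey scheme that stabilises which summand carries the lower bound'' is not an argument, and the obstacle you run into is real \emph{within your setup}, because you apply Proposition \ref{midpoint} to the full embedding into $Y$, whose midpoint sets only confine the $\ell_p$-component at the coarse scale $\delta^{1/q}>\delta^{1/r}$, exactly as you computed.

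The missing idea (this is how the paper, following Kalton--Randrianarivony, proceeds) is to decouple the coordinates and apply the midpoint principle to the coordinate map $g=P_p\circ f\colon \ell_r\to\ell_p$ \emph{by itself}, writing $f=(g,h)\colon \ell_r\to\ell_p\oplus_\infty\ell_q$. If $\mathrm{Lip}_\infty(g)=0$, then at large scales the lower coarse Lipschitz bound is carried entirely by $h$, so $h$ is a coarse Lipschitz embedding of $\ell_r$ into $\ell_q$, impossible by Corollary \ref{graphellqellp} since $r\neq q$. If $\mathrm{Lip}_\infty(g)>0$, Proposition \ref{midpoint} yields $x,y$ with $\|x-y\|$ as large as needed and $g(\mathrm{Mid}(x,y,\delta))\subset \mathrm{Mid}_{\ell_p}\bigl(g(x),g(y),(1+\eps)\delta\bigr)$, and now Lemma \ref{midpointellp}(ii) applies \emph{in $\ell_p$}: on the whole source midpoint set, $g$ is confined to a compact set plus a ball of radius $C\delta^{1/p}\|v\|$, irrespective of how $f(x)-f(y)$ splits between the summands -- precisely the uniformity your version lacked. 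Placing the graph $\varphi(\n)=u+\tau k^{-1/r}(e_{n_1}+\dots+e_{n_k})$ inside $u+\delta^{1/r}\|v\|B_{E_N}\subset \mathrm{Mid}(x,y,\delta)$ and applying Theorem \ref{KaRa2} to $h\circ\varphi$ exactly as in your third paragraph, disjoint vertices are $2^{1/r}\tau$-separated with $\tau\sim\delta^{1/r}\|v\|$, the $h$-oscillation on a subgraph $G_k(\Mdb')$ is $O(\tau k^{1/q-1/r})=o(\tau)$ since $r<q$, so the $g$-images of infinitely many pairwise disjoint vertices must be $c\tau$-separated while lying in a compact set plus $C\delta^{1/p}\|v\|B_{\ell_p}$; as $p<r$ gives $\delta^{1/p}\ll\delta^{1/r}$, choosing $\delta$ small and then $k$ and $\|v\|$ large contradicts total boundedness. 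In short, your ``clean sub-case'' argument runs unconditionally once the midpoint principle is aimed at the $\ell_p$-coordinate alone; no case analysis on where $g(x)-g(y)$ concentrates, and no stabilisation scheme, is needed.
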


\begin{proof} When $r>q$, the argument is based on a midpoint technique. If $r<p$, it follows
immediately from Corollary \ref{KaRa3}. So we assume now that
$1\le p<r<q<\infty$ and $f=(g,h):\ell_r \to \ell_p\oplus_\infty
\ell_q$ is a coarse-Lipschitz embedding. Applying the midpoint
technique to the coarse Lipschitz map $g$ and then Theorem
\ref{KaRa2} to the map $h\circ \varphi$ with $\varphi$ of the form
$\varphi(\n)=u+\tau k^{-1/r}(e_{n_1}+..+e_{n_k})$, where $(e_n)$
is the canonical basis of $\ell_r$ and $\tau >0$ is large enough, leads to a contradiction.
\end{proof}

We can now state and prove the main result of \cite{KR}.

\begin{Thm} If $1<p_1<..<p_n<\infty$ are all different from 2, then
$\ell_{p_1}\oplus ...\oplus \ell_{p_n}$ has a unique net
structure.
\end{Thm}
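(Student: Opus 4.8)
The plan is to follow the template of the proof of Theorem \ref{JLS96}, replacing $\ell_p$ by the finite sum $Z=\ell_{p_1}\oplus\cdots\oplus\ell_{p_n}$ and supplying the extra linear structure theory needed to separate the summands. Assume $X \buildrel {N}\over {\sim} Z$. Since net equivalence is symmetric and produces a bi-coarse-Lipschitz bijection, we obtain both $X \buildrel {CL}\over {\hookrightarrow} Z$ and $Z \buildrel {CL}\over {\hookrightarrow} X$, and $X$ is separable ($Z$ is, and a net in $Z$ is countable). The goal $X\simeq Z$ will come from a linear-structure result applied to the image of $X$ under the ultrapower argument, fed by a non-embedding fact coming from the graph and midpoint techniques.

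First I would establish the key non-embedding fact: for every $r\in[1,\infty)$ with $r\notin\{p_1,\dots,p_n\}$, the space $\ell_r$ does not coarse Lipschitz embed into $Z$. This generalizes the preceding Corollary from two to $n$ summands by the same device. Writing a putative embedding as $f=(f_i)_{i=1}^n\colon\ell_r\to\bigoplus_i\ell_{p_i}$ and testing it on the maps $\varphi(\n)=u+\tau k^{-1/r}(e_{n_1}+\cdots+e_{n_k})$, one applies the approximate midpoint technique (Proposition \ref{midpoint} and Lemma \ref{midpointellp}) to each coordinate $f_i$ with $p_i<r$ to make $f_i$ nearly constant on a large midpoint set containing $\varphi(G_k(\Mdb))$, and the Kalton--Randrianarivony estimate (Theorem \ref{KaRa2}, through Corollary \ref{KaRa3}) to each coordinate with $p_i>r$; since $\|k^{-1/r}(1,\dots,1)\|_{\overline\rho_{\ell_{p_i}}}\approx k^{1/p_i-1/r}\to0$ when $p_i>r$, a finite iteration of Ramsey extractions shrinks every coordinate image on a subgraph $G_k(\mathbb{M}')$ whose points remain at mutual distance $\approx\tau$ in $\ell_r$, contradicting the lower estimate of a coarse Lipschitz embedding for $\tau$ large. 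Consequently, since a linear copy of $\ell_r$ inside any space would coarse Lipschitz embed, neither $X$ nor any sub-sum $\bigoplus_{i\in S}\ell_{p_i}$ contains an isomorphic copy of $\ell_r$ for $r\notin\{p_i\}$; in particular (as $2\notin\{p_i\}$) none of them contains $\ell_2$.

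Next, arguing exactly as in the proof of Theorem \ref{JLS96}, the ultrapowers $X_{\mathcal U}$ and $Z_{\mathcal U}=\bigoplus_{i=1}^n(\ell_{p_i})_{\mathcal U}$ are Lipschitz isomorphic. Each $(\ell_{p_i})_{\mathcal U}$ is an $L_{p_i}(\mu_i)$-space, so $Z_{\mathcal U}$ is reflexive and has the Radon--Nikodym property, and the classical Lipschitz differentiation theory (see \cite{BL} and \cite{JLS}) then shows that $X$ is linearly isomorphic to a complemented subspace of $W:=\bigoplus_{i=1}^n L_{p_i}$.

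The main obstacle is the purely linear structure step: to show that a complemented subspace $X$ of $W$ containing no copy of $\ell_2$ splits as $X\simeq\bigoplus_{i=1}^n X_i$, with each $X_i$ a complemented subspace of $L_{p_i}$. This rests on the classical fact that for distinct $p\neq q$ in $(1,\infty)$ every operator $L_p\to L_q$ fixing no copy of $\ell_2$ is strictly singular; applied to the off-diagonal blocks of the projection of $W$ onto $X$ (which fix no $\ell_2$ because $X$ contains none), it lets one block-diagonalize that projection up to a strictly singular perturbation and split $X$ by a Fredholm-type argument. Here the hypothesis $p_i\neq2$ is essential. Once the splitting is obtained, each $X_i$ is a complemented subspace of $L_{p_i}$ containing no $\ell_2$, hence by Johnson--Odell \cite{JO} is finite-dimensional or isomorphic to $\ell_{p_i}$, so $X\simeq\bigoplus_{i\in S}\ell_{p_i}$ for some $S\subseteq\{1,\dots,n\}$ after absorbing finite-dimensional pieces. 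Finally, for each $j$ we have $\ell_{p_j} \buildrel {CL}\over {\hookrightarrow} Z \buildrel {CL}\over {\hookrightarrow} X\simeq\bigoplus_{i\in S}\ell_{p_i}$, so the non-embedding fact of the second paragraph (applied to the sub-sum) forces $j\in S$; thus $S=\{1,\dots,n\}$ and $X\simeq Z$, which proves that $Z$ has a unique net structure.
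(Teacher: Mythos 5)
Your first three steps match the paper's scheme: the bi-coarse-Lipschitz equivalence, the generalized non-embedding fact for $\ell_r$ with $r\notin\{p_1,\dots,p_n\}$ (midpoints for the coordinates with $p_i<r$, Theorem \ref{KaRa2} via Corollary \ref{KaRa3} for those with $p_i>r$, as in \cite{KR}), and the ultrapower argument placing $X$ as a complemented subspace of $W=\bigoplus_i L_{p_i}$, exactly as in the proof of Theorem \ref{JLS96}. Your final bookkeeping step (forcing $S=\{1,\dots,n\}$ by coarse Lipschitz embedding each $\ell_{p_j}$ into the sub-sum) is also a legitimate variant of the paper's appeal to the uniqueness of the net structure of each $\ell_{p_i}$. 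The genuine gap is the linear splitting step. The ``classical fact'' you invoke --- that every operator $L_p\to L_q$, $p\neq q$ in $(1,\infty)$, fixing no copy of $\ell_2$ is strictly singular --- is false when both exponents lie below $2$. Indeed, for $1<p<q<2$ the closed span of i.i.d.\ symmetric $q$-stable random variables in $L_p$ is isomorphic to $\ell_q$ and complemented in $L_p$ (complementation requires $p>1$, which holds here); composing the projection onto this span with an embedding of $\ell_q$ into $L_q$ yields an operator $L_p\to L_q$ which fixes a copy of $\ell_q$, hence is not strictly singular, yet factors through $\ell_q$ and therefore fixes no copy of $\ell_2$. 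Since the theorem allows several $p_i$ in $(1,2)$, your block-diagonalization ``up to strictly singular perturbation'' collapses precisely in that regime. Note also that no two of the spaces $L_{p_i}$, $L_{p_j}$ are ever totally incomparable, since all of them contain $\ell_2$, so no direct appeal to \cite{EW} at the level of $W$ is available either.

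The paper's proof circumvents this differently. For the mixed case $1<p<2<q$ it does not study operators between the $L_{p_i}$'s at all: it applies Johnson's factorization theorem \cite{J} to operators from $X$ itself into $L_q$ --- the hypothesis being that $\ell_2$ does not embed into the \emph{domain} $X$ and that $q>2$ --- to replace $L_q$ by $\ell_q$, obtaining $X\subseteq_c L_p\oplus\ell_q$. Now $L_p$ (with $p<2$) and $\ell_q$ (with $q>2$) genuinely are totally incomparable, so the {\`E}del\v{s}te{\u\i}n--Wojtaszczyk theorem \cite{EW} splits $X\simeq F\oplus G$ with $F\subseteq_c L_p$ and $G\subseteq_c\ell_q$, and then \cite{P} and \cite{JO} identify the summands. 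For the same-side cases $1<p<q<2$ and $2<p<q$ --- the first being exactly where your strict-singularity claim fails (for $2<p\neq q$ your claim is in fact true, by the Kadec--Pe\l czy\'nski dichotomy, though the paper does not argue this way) --- the paper simply quotes the linear arguments of \cite{JLS}, which differ from your Fredholm perturbation scheme. To repair your proposal you would need either to restrict the strict-singularity claim to pairs of exponents separated by $2$ and add the Johnson-factorization reduction together with the \cite{JLS} analysis for same-side pairs, or to import the full inductive argument of \cite{KR}.
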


\begin{proof} We will only sketch the proof for $\ell_p\oplus \ell_q$,
with $1<p<q<\infty$ such that $2\notin\{p,q\}$. Assume that $X$ is
Banach space such that $X\buildrel {N}\over {\sim} \ell_p\oplus
\ell_q$. The key point is again to show that $X$ does not contain
any isomorphic copy of $\ell_2$. This follows clearly from the
above corollary. To conclude the proof, we need to use a few deep
linear results. The cases $1<p<q<2$ and $2<p<q$, were actually
settled in \cite{JLS} for uniform homeomorphisms. So let us only
explain the case $1<p<2<q$. As in the proof of Theorem
\ref{JLS96}, we obtain that $X\subseteq_c L_p\oplus L_q$. Since
$\ell_2 \varsubsetneq X$ and $q>2$, a theorem of Johnson
\cite{J} insures that any bounded operator from $X$ into $L_q$
factors through $\ell_q$ and therefore that $X\subseteq_c
L_p\oplus \ell_q$. Then we notice that $L_p$ and $\ell_q$ are
totally incomparable, which means that they have no isomorphic
infinite dimensional subspaces. We can now use a theorem of
{\`{E}}del\v{s}te{\u\i}n and  Wojtaszczyk \cite{EW} to obtain that
$X\simeq F\oplus G$, with $F\subseteq_c L_p$ and $G\subseteq_c
\ell_q$. First it follows from \cite{P} that $G$ is isomorphic to
$\ell_q$ or is finite dimensional. On the other hand, we know that
$\ell_2 \varsubsetneq F$, and by the Johnson-Odell theorem
\cite{JO} $F$ is isomorphic to $\ell_p$ or finite dimensional.
Summarizing, we have that $X$ is isomorphic to $\ell_p$, $\ell_q$
or $\ell_p\oplus \ell_q$. But we already know that $\ell_p$ and
$\ell_q$ have unique net structure. Therefore $X$ is isomorphic to
$\ell_p\oplus \ell_q$.
\end{proof}

\noindent {\bf Remark.} Let $1<p<\infty$ and $p\neq 2$. It is
clear that the above proof cannot work for $\ell_p\oplus \ell_2$.
As we already mentioned, it is unknown whether
$\ell_p\oplus\ell_2$ has a unique uniform structure. The same
question is open for $L_p$, $1<p<\infty$ (see Problem 1).

\medskip However, let us indicate a few other results from \cite{KR}
that can be derived from Theorem \ref{KaRa2}. The following theorem is related to a
recent result of \cite{HOS} stating that if $2<p<\infty$, a subspace $X$ of $L_p$ which is not
isomorphic to a subspace of $\ell_p\oplus \ell_2$ contains an isomorphic copy of
$\ell_p(\ell_2)$.

\begin{Thm} Let $1<p<\infty$ and $p\neq 2$.

Then $\ell_p(\ell_2)$ and therefore $L_p$ do not coarse Lipschitz
embed into $\ell_p\oplus \ell_2$.
\end{Thm}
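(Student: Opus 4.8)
The plan is to treat $\ell_p(\ell_2)$ first, since $\ell_p(\ell_2)$ embeds isomorphically as a closed linear subspace of $L_p$ and a linear isomorphic embedding is in particular a coarse Lipschitz embedding; composing with the inclusion, any coarse Lipschitz embedding of $L_p$ into $\ell_p\oplus\ell_2$ would restrict to one of $\ell_p(\ell_2)$, so it suffices to rule out $\ell_p(\ell_2)\buildrel{CL}\over{\hookrightarrow}\ell_p\oplus\ell_2$. Assume for contradiction that $f=(g,h)\colon\ell_p(\ell_2)\to\ell_p\oplus_\infty\ell_2$ is such an embedding, with $A\|z-z'\|\le\max(\|g(z)-g(z')\|,\|h(z)-h(z')\|)\le B\|z-z'\|$ for large $\|z-z'\|$. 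I record that $\overline\rho_{\ell_p}$ is of power type $p$ and $\overline\rho_{\ell_2}$ of power type $2$, so Theorem \ref{KaRa2} applied to the coordinate $g$ is governed by $\|a\|_{\overline\rho_{\ell_p}}\asymp\|a\|_p$ and applied to $h$ by $\|a\|_{\overline\rho_{\ell_2}}\asymp\|a\|_2$. A single normalized weakly null sequence in $\ell_p(\ell_2)$ has an $\ell_p$- or an $\ell_2$-spreading model, and for either of these Corollary \ref{KaRa3} holds with no contradiction, precisely because the two spreading rates $p,2$ of the domain match the two summands of the range. Hence one must exploit the genuinely two-parameter asymptotic structure of $\ell_p(\ell_2)$, by combining the approximate midpoint principle (Proposition \ref{midpoint} with Lemma \ref{midpointellp}) on one coordinate with the graph theorem on the other.

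\textbf{The configuration (case $p<2$).} Writing $B_1,B_2,\dots$ for the blocks and $(w^{(j)}_i)_i$ for the canonical $\ell_2$-basis of $B_j$, I would work with
$$z_{\overline n}=u+\frac{\delta^{1/2}t}{k^{1/p}}\sum_{j=1}^{k}w^{(j)}_{n_j},\qquad \overline n=(n_1,\dots,n_k)\in G_k(\Mdb),$$
sitting inside an approximate midpoint set $\mathrm{Mid}(x,y,\delta)$ whose vector $v=\tfrac12(x-y)=\tfrac{t}{k^{1/p}}\sum_{j}w^{(j)}_0$ spreads evenly over the same $k$ blocks, so that $\|v\|=t$. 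An elementary check shows $z_{\overline n}\in\mathrm{Mid}(x,y,\delta)$ and $\|z_{\overline n}-z_{\overline m}\|=\sqrt2\,\delta^{1/2}t\,k^{-1/p}\,d_a(\overline n,\overline m)^{1/p}$ for $a=(1,\dots,1)$; thus the family has macroscopic diameter $\sim\delta^{1/2}t$, while each coordinate change is an across-block (hence $\ell_p$-type) move built from within-block fresh directions. The point of this dual nature is that the configuration lives at the within-block scale $\delta^{1/2}$ but carries the $\ell_p$-across-block metric.

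\textbf{The two squeezings and the contradiction.} Because the midpoint set lies in the domain of $g$ and, by Proposition \ref{midpoint} and the $\ell_p$-estimate of Lemma \ref{midpointellp}(ii), $g(\mathrm{Mid}(x,y,\delta))\subset \mathrm{Mid}(g(x),g(y),(1+\eps)\delta)$ is contained in a compact set plus a ball of radius $\asymp\delta^{1/p}\|g(x)-g(y)\|\lesssim\delta^{1/p}Bt$, a compactness extraction gives $\mathrm{diam}\,g(\{z_{\overline n}\})\lesssim\delta^{1/p}Bt$, which is a factor $\delta^{1/p-1/2}\to0$ smaller than the diameter of the configuration as $\delta\to0$ (here $p<2$). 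Applying Theorem \ref{KaRa2} to $h$ along $\overline n\mapsto z_{\overline n}$, whose Lipschitz constant is $\lesssim B\delta^{1/2}t\,k^{-1/p}$, yields $\mathrm{diam}\,h(\{z_{\overline n}:\overline n\in G_k(\Mdb')\})\lesssim B\delta^{1/2}t\,k^{-1/p}\|a\|_2\asymp B\delta^{1/2}t\,k^{1/2-1/p}\to0$ as $k\to\infty$, again because $p<2$. Choosing $\overline n,\overline m$ with $d_a(\overline n,\overline m)=k$, the lower embedding estimate forces $A\sqrt2\,\delta^{1/2}t\le\max(\mathrm{diam}\,g,\mathrm{diam}\,h)$; dividing by $\delta^{1/2}t$ and letting $\delta\to0$ and then $k\to\infty$, with $t\to\infty$ to keep the measured distance above the coarse threshold and to absorb the additive error terms, contradicts $A>0$. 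The case $p>2$ should be run dually, compressing the $\ell_2$-coordinate $h$ (whose midpoint radius $\delta^{1/2}$ now beats $\delta^{1/p}$) on an across-block configuration and capping $g$ by Theorem \ref{KaRa2}, the surviving exponents becoming $\delta^{1/2-1/p}\to0$ and $k^{1/p-1/2}\to0$.

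\textbf{Main obstacle.} I expect the decisive difficulty to be the compatibility of the two halves. Proposition \ref{midpoint} is existential and does not let one prescribe the geometry of the midpoint vector $v$, whereas the argument needs $v$ spread evenly over exactly the $k$ blocks used by the graph: if $v$ were concentrated the across-block metric would be lost, and if $v$ were spread over far more blocks the within-block room would collapse and $g$ would no longer be compressed below the scale of the configuration. Reconciling this — by applying the midpoint principle to the restriction of the relevant coordinate to a cross-block subspace, or by localizing and iterating it so that the extracted pair has controlled concentration — is the crux, as is the careful balance of the direct-sum midpoint estimate in the range when $p>2$ (where the $\ell_2$-summand governs the asymptotic smoothness) and the bookkeeping of the simultaneous limits $\delta\to0$, $k\to\infty$, $t\to\infty$ so that both squeezing factors and all additive errors vanish at once.
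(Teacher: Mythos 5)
First, a point of comparison: the paper itself does not prove this theorem — it states it as one of the results of \cite{KR} derivable from Theorem \ref{KaRa2}, and the closest argument it actually writes out is the sketch for the corollary that $\ell_r$ does not coarse Lipschitz embed into $\ell_p\oplus\ell_q$ ($p<r<q$). Your skeleton is exactly that hybrid scheme (approximate midpoints squeezing one coordinate, Theorem \ref{KaRa2} capping the other), correctly transplanted, and your $p<2$ computations are sound as far as they go: the points $z_{\overline n}=u+\delta^{1/2}tk^{-1/p}\sum_j w^{(j)}_{n_j}$ do lie in $\mathrm{Mid}(x,y,\delta)$ \emph{provided} $v=tk^{-1/p}\sum_j w^{(j)}_0$, the distances and the two compression factors $\delta^{1/p-1/2}$ and $k^{1/2-1/p}$ are right, and applying Proposition \ref{midpoint} to $g$ alone (rather than to $f$) is the correct move, matching the paper's sketch. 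But the obstacle you flag at the end is not a technicality to be smoothed over later: it is a genuine gap, and it is precisely what distinguishes this theorem from the $\ell_r$ case. In the $\ell_r$ argument the room exploited is the \emph{isotropic tail} room $u+\delta^{1/r}\|v\|B_{E_N}$ of Lemma \ref{midpointellp}(i), which exists whatever $v$ is; your $p<2$ configuration instead needs the anisotropic \emph{within-block} room of size $\delta^{1/2}\|v\|$, which exists only when $v$ is spread over $\sim k$ blocks with comparable weights. If $v$ concentrates in one block (a pure $\ell_2$ direction), the configuration degenerates to a single-block $\ell_2$ graph, the Kalton--Randrianarivony bound for $h$ becomes $\mathrm{Lip}\cdot\|a\|_{\overline\rho_{\ell_2}}\asymp\delta^{1/2}t$ with no gain, and no contradiction results — consistently with the fact that $\ell_2$ does embed into $\ell_2$. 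Moreover your suggested repair (apply Proposition \ref{midpoint} to the restriction of $g$ to a cross-block subspace) fails structurally: inspecting the proof of the midpoint lemma, the inclusion $g(\mathrm{Mid}(x,y,\delta))\subset\mathrm{Mid}(g(x),g(y),(1+\eps)\delta)$ requires the pair $(x,y)$ to nearly attain the \emph{global} $Lip_\infty$ of the map on all of $X$; a pair nearly attaining $Lip_\infty$ of a restriction controls only the in-subspace midpoint set, which has no room for your configuration. Overcoming this (in \cite{KR} it requires genuinely additional work, e.g.\ weighted graphs $d_a$ adapted to the block distribution of the $v$ that the lemma happens to produce, and a separate treatment of the concentrated regime) is the actual content of the theorem, and your proposal does not supply it.

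Second, your one-sentence treatment of $p>2$ is internally inconsistent as stated. On an \emph{across-block} configuration the graph metric scales like $d_a(\overline n,\overline m)^{1/p}$, so Theorem \ref{KaRa2} applied to the $\ell_p$-coordinate $g$ gives $\mathrm{diam}\,\lesssim\mathrm{Lip}\cdot\|a\|_{\overline\rho_{\ell_p}}\asymp\tau k^{-1/p}\cdot k^{1/p}=\tau$ — no $k^{1/p-1/2}$ gain, hence no contradiction. The exponent $k^{1/p-1/2}$ you want can only come from a \emph{within-block} ($\ell_2$-type, $d^{1/2}$-scaled) configuration; fortunately such a configuration can be placed inside a single fresh block of the isotropic tail room $u+\delta^{1/p}\|v\|B_{E_N}$, so for $p>2$ no control of $v$ is needed at all: the midpoint principle applied to $h$ squeezes it by $\delta^{1/2-1/p}\to0$ relative to the tail-room scale, and Theorem \ref{KaRa2} caps $g$ by $k^{1/p-1/2}\to0$. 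In other words, $p>2$ is the easy case once the configuration is corrected, while $p<2$ is where the theorem resists — which is the reverse of the impression your sketch gives. In summary: right strategy, correct detailed computations in the $p<2$ configuration, but the proposal is not a proof — the spread-of-$v$ issue is an unresolved essential gap for $p<2$, and the $p>2$ case needs a different configuration than the one you name.
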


It follows from Ribe's counterexample that reflexivity is not
preserved under uniform homeomorphisms. However, the following is
proved in \cite{BKL}.

\begin{Thm}\label{reflexive} Let $X$ be a Banach space and $Y$ be a reflexive Banach space with an
equivalent AUS norm. Assume that $X$ coarse Lipschitz embeds into
$Y$. Then $X$ is reflexive.
\end{Thm}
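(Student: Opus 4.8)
The plan is to establish the contrapositive obstruction directly: assuming $X$ is non-reflexive, I build graph maps into $Y$ that violate the diameter estimate of Theorem \ref{KaRa2}. First I replace the norm of $Y$ by its equivalent AUS norm (the change of norm is absorbed into the embedding constants), and I fix a coarse Lipschitz embedding $g:X\to Y$ together with constants $A,B,\theta_0>0$ such that $A\|x-x'\|\le\|g(x)-g(x')\|\le B\|x-x'\|$ whenever $\|x-x'\|\ge\theta_0$. The point is to use Theorem \ref{KaRa2} rather than Corollary \ref{KaRa3}, since the sequence I produce need not be weakly null.

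The starting input is James' sequential characterization of reflexivity: if $X$ is not reflexive, then for some $\theta\in(0,1)$ there exist $(x_n)\subset B_X$ and $(x_n^*)\subset B_{X^*}$ with $x_n^*(x_m)=\theta$ for $n\le m$ and $x_n^*(x_m)=0$ for $n>m$. Fix $k\in\Ndb$, take $a=(1,\dots,1)$ of length $k$, and for a large parameter $\lambda>0$ define $h:(G_k(\Ndb),d_a)\to X$ by $h(\n)=\lambda\sum_{i=1}^k x_{n_i}$. Since $\|x_n\|\le1$ one checks that $h$ is $2\lambda$-Lipschitz. The key technical step is to verify that the James structure forces $\|h(\n)-h(\m)\|\ge\lambda\theta$ for all $\n\neq\m$: writing $h(\n)-h(\m)=\lambda\sum_r c_r x_r$ with $c_r\in\{-1,0,1\}$ (each index occurs at most once in $\n$ and once in $\m$), one tests against the functional indexed by the largest index with $c_r\neq0$, which isolates a single $\pm\theta$ contribution. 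Choosing $\lambda$ so that $\lambda\theta\ge\theta_0$, every pair of distinct images of $h$ lies in the coarse Lipschitz regime, so $g\circ h$ is genuinely Lipschitz with $\mathrm{Lip}(g\circ h)\le 2B\lambda$, and Theorem \ref{KaRa2} produces an infinite $\Mdb'\subset\Ndb$ with
$$\diam (g\circ h)(G_k(\Mdb'))\le 4eB\lambda\,\|a\|_{\overline\rho_Y}+\eps.$$

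For the matching lower bound I choose $\n,\m\in G_k(\Mdb')$ with $n_k<m_1$ and test $h(\n)-h(\m)$ against $x_{m_1}^*$: here $x_{m_1}^*(x_{n_i})=0$ for all $i$ while $x_{m_1}^*(x_{m_j})=\theta$ for all $j$, so $\|h(\n)-h(\m)\|\ge\lambda\theta k$, and the lower embedding bound gives $\diam(g\circ h)(G_k(\Mdb'))\ge A\lambda\theta k$. Combining the two inequalities and dividing by $\lambda$ yields $A\theta k\le 4eB\,\|a\|_{\overline\rho_Y}+\eps/\lambda$.

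Finally I feed in the AUS hypothesis through the Orlicz norm. Writing $\|a\|_{\overline\rho_Y}=r_k$, continuity and monotonicity of the Orlicz function $\overline\rho_Y$ give $\overline\rho_Y(1/r_k)=1/k$, so with $t_k=1/r_k\to0$ one has $r_k/k=\overline\rho_Y(t_k)/t_k\to0$ precisely because $\|\cdot\|$ is AUS. Dividing the combined inequality by $k$ and letting $k\to\infty$ forces $A\theta\le0$, contradicting $A,\theta>0$; hence $X$ is reflexive. The main obstacle to get right is the separation estimate $\|h(\n)-h(\m)\|\ge\lambda\theta$ for \emph{all} pairs (not just the disjoint ones), since this is what upgrades the coarse Lipschitz embedding to an honest Lipschitz map on the graph and is exactly where the James triangular condition is used; the rest is the asymptotic bookkeeping $r_k/k\to0$.
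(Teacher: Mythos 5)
Your argument is correct, and its skeleton coincides with the paper's: the proof sketched after Theorem \ref{reflexive} lists James' characterization of reflexivity and Theorem \ref{KaRa2} as two of its three ingredients, and you use both exactly as intended --- James' triangular system $(x_n),(x_n^*)$ to build $h(\n)=\lambda\sum_{i=1}^k x_{n_i}$ on $(G_k(\Ndb),d_a)$, the $\lambda\theta$-separation (via evaluating against the functional of largest active index) to upgrade the coarse Lipschitz embedding $g$ to an honest Lipschitz map $g\circ h$ on the graph, and the diameter bound of Theorem \ref{KaRa2} against the lower bound $A\lambda\theta k$ obtained from two successive tuples. Where you genuinely diverge is the third ingredient: the paper (following \cite{BKL}) invokes the Odell--Schlumprecht renorming theorem \cite{OS} to replace the AUS norm by one satisfying $\overline{\rho}_Y\le\overline{\rho}_{\ell_p}$, whence $\|(1,\dots,1)\|_{\overline{\rho}_Y}\lesssim k^{1/p}$ and the contradiction $k\lesssim k^{1/p}$ is immediate; you instead estimate the Luxemburg norm directly for an arbitrary AUS modulus, deriving $r_k=\|(1,\dots,1)\|_{\overline{\rho}_Y}=o(k)$ from the sole qualitative hypothesis $\lim_{t\to 0}\overline{\rho}_Y(t)/t=0$. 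This substitution buys a more elementary and self-contained proof, since Odell--Schlumprecht is a deep theorem while your replacement is a short computation with the Orlicz function, and it loses nothing in generality. One small point to patch: your identity $\overline{\rho}_Y(1/r_k)=1/k$ tacitly assumes $\overline{\rho}_Y(t)>0$ for all $t>0$ (needed also to conclude $t_k=1/r_k\to 0$). If instead $\overline{\rho}_Y$ vanished on some interval $(0,t_0]$, the identity fails, but then $r_k\le 1/t_0$ is bounded and $r_k/k\to 0$ holds trivially; alternatively, this degenerate case cannot occur here, because the modulus passes to closed subspaces ($\overline{\rho}_Z\le\overline{\rho}_Y$ for $Z\subseteq Y$), so $\overline{\rho}_Y(t_0)=0$ would force every separable infinite-dimensional subspace of $Y$ to embed into $c_0$ by \cite{GKL}, contradicting reflexivity. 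With that one-line remark added, your proof is complete.
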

The proof has three ingredients: a result of Odell and
Schlumprecht \cite{OS} asserting that $Y$ can be renormed in such
a way that $\overline{\rho}_Y \le \overline{\rho}_{\ell_p}$,
James' characterization of reflexivity and Theorem \ref{KaRa2}.

\medskip\noindent {\bf Remark.} Note that Theorems \ref{reflexive} and \ref{KaRa2} seem to take us very close to the solution of Problem 2. See also Corollary \ref{beta} below.

\subsection{Uniform asymptotic convexity}\label{uac}

Until very recently, there has been no corresponding result about
the stability of convexity. The only thing that could be mentioned
was the elementary use of the approximate midpoint principle that
we already described. In a recent article \cite{K6}, Nigel Kalton
made a real breakthrough in this direction. Let us first state his
general result.

\begin{Thm} Suppose $X$ and $Y$ are Banach spaces and that there
is a coarse Lipschitz embedding of $X$ into $Y$. Then there is
 a constant $C>0$ such that for any spreading model $(e_k)_{k=1}^\infty$ of a weakly null normalized sequence in $X$ (whose norm will be denoted $\|\ \|_S$), we have:
 $$\|e_1+..+e_k\|_{\overline \delta_Y}\le C\|e_1+..+e_k\|_S.$$
\end{Thm}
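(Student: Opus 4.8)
The plan is to run the \emph{dual} of the Kalton--Randrianarivony argument, replacing the graph estimate of Theorem~\ref{KaRa2} (which needs reflexivity and produces an \emph{upper} $\overline\rho_Y$-bound) by the approximate midpoint principle of Proposition~\ref{midpoint} (which is valid for an arbitrary target $Y$ and produces a \emph{lower} $\overline\delta_Y$-bound). First I would reformulate the conclusion. Writing $M_k=\|e_1+\cdots+e_k\|_S$ and recalling that for the Luxemburg norm one has, up to the usual continuity of the Orlicz function, the defining relation $k\,\overline\delta_Y\big(1/\|e_1+\cdots+e_k\|_{\overline\delta_Y}\big)=1$, the desired inequality $\|e_1+\cdots+e_k\|_{\overline\delta_Y}\le C M_k$ is equivalent to
$$k\,\overline\delta_Y\Big(\frac{1}{C M_k}\Big)\le 1 \qquad (k\in\Ndb).$$
So it suffices to produce, for a single universal constant $C$ (depending only on the distortion of the embedding), a configuration in $Y$ certifying that the convexity modulus at scale $1/(CM_k)$ is at most $1/k$.

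Next I would record the range estimate, the general-$Y$ analogue of Lemma~\ref{midpointellp}(ii): for every $u',v'\in Y$ and every $\eta\in(0,1)$,
$$\mathrm{Mid}(u',v',\eta)\subset K+c\,\overline\delta_Y^{-1}(\eta)\,\|v'\|\,B_Y$$
with $K$ compact and $c$ absolute. This comes straight from the definition of $\overline\delta_Y$: for $\hat v'=v'/\|v'\|\in S_Y$ choose a finite-codimensional subspace $W$ with $\inf_{w\in S_W}\|\hat v'+tw\|\ge 1+\overline\delta_Y(t)-\eta/2$; if a displacement $z-u'$ had a $W$-component of norm exceeding $c\,\overline\delta_Y^{-1}(\eta)\|v'\|$ then one of $\|z-u'\pm v'\|$ would exceed $(1+\eta)\|v'\|$, contradicting $z\in\mathrm{Mid}(u',v',\eta)$, while the component in a fixed finite-dimensional complement stays bounded, hence ranges in a compact set.

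The bridge is Proposition~\ref{midpoint}, applied to the coarse Lipschitz embedding $g\colon X\to Y$ (an embedding forces $Lip_\infty(g)>0$): for prescribed $\delta,\eps$ and arbitrarily large $s$ it yields $x_0,y_0\in X$ with $\|x_0-y_0\|=2s$ and $g(\mathrm{Mid}(x_0,y_0,\delta))\subset \mathrm{Mid}(g(x_0),g(y_0),(1+\eps)\delta)$, the latter sitting, by the previous paragraph, inside $K+c\,\overline\delta_Y^{-1}((1+\eps)\delta)\,Bs\,B_Y$ (since $\|g(x_0)-g(y_0)\|\le 2Bs$). On the domain side I would feed the weakly null sequence $(x_n)$ into $\mathrm{Mid}(x_0,y_0,\delta)$, writing $v_0=(x_0-y_0)/2$: by weak nullity and a diagonal/Ramsey passage to subsequences one realizes, inside this midpoint set, an infinite family of points tracing out blocks $\sum_i x_{n_i}$ of the spreading model, pairwise separated at a scale governed by $M_k$. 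Since all distances involved exceed $\theta$, $g$ is genuinely bi-Lipschitz-below there, so the compactness pigeonhole against the thin range set forces two images to be close, yielding a lower bound on $\overline\delta_Y^{-1}$ at the relevant scale and hence the displayed inequality.

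The main obstacle is precisely this domain step, and it is where the factor $k$ must be manufactured. Because $x_0,y_0$ are handed to us by the midpoint principle and \emph{cannot} be chosen from the spreading model, moving in the weakly null directions keeps us in $\mathrm{Mid}(x_0,y_0,\delta)$ only through the crude triangle estimate $\|v_0\pm w\|\le\|v_0\|+\|w\|$; used with a single $k$-term block this gives only a trivial \emph{linear} relation $\overline\delta_Y^{-1}(\delta)\gtrsim\delta$ and loses all dependence on $k$. To recover the sharp Orlicz normalization one has to iterate: nest the midpoint construction $k$ times (the convexity counterpart of the induction on $k$ and the iterated weak limits behind Theorem~\ref{KaRa2}), so that each of the $k$ successive weakly null increments is forced, on the $Y$-side, to pay one unit of the convexity modulus $\overline\delta_Y$, while on the $X$-side the increments assemble into a block of $\|\cdot\|_S$-norm $M_k$; the $k$ payments accumulate to the normalization $1/\overline\delta_Y^{-1}(1/k)$. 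Keeping the error terms ($\eps$, the spreading-model approximations, and the compact ``junk'' contributed at each level) under control uniformly across the $k$ nested levels is the delicate point, and it is the technical heart of Kalton's argument in \cite{K6}.
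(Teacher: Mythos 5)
You stall exactly where you say you do, and the remedy you sketch --- nesting the midpoint construction $k$ times inside $X$ --- is not how the factor $k$ is actually produced, and nothing in your outline suggests the multi-level error bookkeeping could be brought under control. In Kalton's argument (the paper proves the intermediate inequality $c\theta\|e_1+\cdots+e_k\|_{\tilde\delta_Y}\le \|\eps_1x_{n_1}+\cdots+\eps_kx_{n_k}\|_{L^1(\Omega,X)}$ and refers to \cite{K6} for the rest), the approximate midpoint principle is applied exactly \emph{once}, but not to the embedding $g:X\to Y$ itself. One first builds, from the weakly null $\theta$-separated sequence and the Rademacher averages $\sigma_k=\sup\{\|\sum_{j=1}^k\eps_jx_{n_j}\|_{L^1(\Omega,X)}\}$, the Orlicz function $F_k$ with slope $\sigma_k/k$ near the origin, the norm $\Lambda_{F_k}$ on $c_{00}$, and the operator $T\xi=\sum_j\xi_j\eps_j\otimes x_j$ into $L_1(\Omega;X)$; the midpoint principle is then applied to the composition $g=f\circ T:(c_{00},\Lambda_{F_k})\to L_1(\Omega;Y)$ with error parameter $\delta=1/k$. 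The whole point of this transfer is that in the Orlicz domain the midpoint set is computable: since $F_k(\sigma_k^{-1})=1/k$, \emph{every} perturbation $\xi+\tau\sigma_k^{-1}e_j$ with $j\ge m$ lies in $\mathrm{Mid}(\eta,\zeta,1/k)$, so in one shot one obtains an infinite family of midpoints whose images are pointwise $(\theta\tau\sigma_k^{-1}-1)$-separated. The factor $k$ enters through the matched pair (slope $\sigma_k/k$ of $F_k$, midpoint error $1/k$), not through $k$ iterated midpoint applications; this is precisely the device that circumvents the obstacle you correctly identified (the midpoint principle hands you $x_0,y_0$ you cannot adapt to the spreading model), and it is missing from your plan.

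A secondary but genuine problem is your $Y$-side ``range estimate'' $\mathrm{Mid}(u',v',\eta)\subset K+c\,\overline\delta_Y^{-1}(\eta)\|v'\|B_Y$. This does not come ``straight from the definition'' in a general Banach space: the finite-codimensional subspace $W$ witnessing the modulus $\overline\delta_Y$ is chosen for the fixed direction $\hat v'$, whereas a midpoint displacement $z-u'$ carries a finite-dimensional component that shifts the center away from $\hat v'$, so the coordinatewise $\ell_p$ computation of Lemma \ref{midpointellp}(ii) does not transfer; moreover, without reflexivity you have no compactness available for the pigeonhole you invoke. The paper's proof needs neither: its $Y$-side input is the two-point inequality for $N=N^2_{\tilde\delta_Y}$, namely $\liminf_{n\to\infty}(\|y-y_n\|+\|z-y_n\|)\ge N(\|y-z\|,t)$ for any bounded $t$-separated sequence, applied pointwise on $\Omega$, integrated, and combined with Jensen's inequality, which yields $\tilde\delta_Y\big(\theta/(32K\sigma_k)\big)\le 1/k$, i.e.\ $\|e_1+\cdots+e_k\|_{\tilde\delta_Y}\le 32K\theta^{-1}\sigma_k$, directly. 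So both load-bearing steps of your proposal --- the mechanism generating the factor $k$ and the compactness argument in $Y$ --- diverge from the actual proof, and the first is exactly the gap you yourself flag without closing it.
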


We will not prove this in detail. We have chosen instead to show
an intermediate result whose proof contains one of the key
ingredients. Let us first describe the main idea. We wish to use
the approximate midpoints principle. But, unless the space is very
simple or concrete (like $\ell_p$ spaces), the approximate
midpoint set is difficult to describe. Kalton's strategy in
\cite{K6} was, in order to prove the desired inequality, to define
an adapted norm on an Orlicz space associated with any given weakly null
sequence in $X$. Then, by composition, he was able to
reduce the question to the study of a coarse-Lipschitz map from that
space to the Orlicz space associated with the modulus
of asymptotic convexity of $Y$. Finally, as in $\ell_p$, the
approximate midpoints are not so difficult to study in an Orlicz
space. Before stating the result, we need some preliminary
notation.

\medskip Let $\phi$ be an Orlicz Lipschitz function. Then,
$N_\phi(1,t)=1+\phi(|t|)$ (for $t\in \Rdb$) extends to a norm
$N_\phi^2$ defined on $\Rdb^2$. We now define inductively a norm
on $\Rdb^j$ by
$N_\phi^j(x_1,..,x_j)=N_\phi^2(N_\phi^{j-1}(x_1,..,x_{j-1}),x_j)$
for $j\ge 2$. These norms are compatible and define a norm
$\Lambda_\phi$ on $c_{00}$. One can check that $\frac12\|\
\|_\phi\le \|\ \|_{\Lambda_\phi}\le \|\ \|_\phi.$

We also need to introduce the following quantities:
$$ \hat\delta_X(t)=\inf_{x\in\partial B_X}\sup_{E}\inf_{y\in\partial B_E}
\{\frac12(\|x+ty\|+\|x-ty\|)-1\}$$ where again $E$ runs through
all closed subspaces of $X$ of finite codimension. The function $\hat\delta_X(t)/t$ is increasing
and so $\hat\delta_X$ is equivalent to the convex function
$$\tilde\delta_X(t)=\int_0^t \frac{\hat\delta_X(s)}{s}\,ds.$$

\begin{Thm} Let $(\eps_i)_{i=1}^\infty$ be a sequence of
independent Rademacher variables on a probability space $\Omega$.
Assume that $X$ and $Y$ are Banach spaces and that there is a coarse
Lipschitz embedding of $X$ into $Y$. Then, there is a constant $c>0$
 such that given any $(x_n)$ weakly
null normalized $\theta$-separated sequence in $X$
and any integer $k$, there exist $n_1<..<n_k$ so that:
$$c\theta\|e_1+\cdots+e_k\|_{\tilde\delta_Y}\le \|\eps_1x_{n_1}+\cdots+\eps_kx_{n_k}\|_{L^1(\Omega,X)},$$
where $(e_k)_{k=1}^\infty$ is the canonical basis of $c_{00}$.
\end{Thm}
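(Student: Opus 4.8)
The plan is to follow Kalton's strategy outlined in the preamble: transport the approximate-midpoint analysis from the space $X$ itself to an Orlicz space built from the given weakly null sequence, where midpoints are tractable. First I would introduce the Orlicz Lipschitz function $\phi=\tilde\delta_Y$ and form the norm $\Lambda_\phi$ on $c_{00}$, together with the associated ``averaging'' map that sends a block of indices $\overline n=(n_1,\dots,n_k)\in G_k(\Mdb)$ to the vector $\sum_{i=1}^k a_i x_{n_i}\in X$. Composing with the coarse Lipschitz embedding $g\colon X\to Y$ (with $Lip_\infty(g)>0$, which I may assume after rescaling since the embedding is lower-bounded), I obtain a coarse Lipschitz map $F\colon(\mathbb R^k,\Lambda_\phi)\to Y$ on the relevant finite-dimensional slice. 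Because the sequence $(x_n)$ is weakly null, normalized and $\theta$-separated, the map $\overline n\mapsto \sum a_i x_{n_i}$ distorts the metric $d_a$ on $G_k(\Mdb)$ comparably to the $\Lambda_\phi$-norm of $a$; this is where the hypotheses on $(x_n)$ are spent.

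Next I would invoke the approximate-midpoint principle, Proposition~\ref{midpoint}, applied to $F$: for suitable $t,\eps$ and $\delta$ I produce points whose $\Lambda_\phi$-midpoint sets map into small-error midpoint sets in $Y$. The crucial computation is to understand the midpoint set in the Orlicz space $(\mathbb R^k,\Lambda_\phi)$. Just as in $\ell_p$ (Lemma~\ref{midpointellp}), the approximate midpoint set between $\pm(e_1+\cdots+e_k)/2$-type configurations contains a large cube in the directions associated with the individual coordinates, with the size of that cube governed by $\|e_1+\cdots+e_k\|_{\tilde\delta_Y}$. I would then push this cube forward through $F$ and read off, on the $Y$-side, that the image must contain a comparably large set sitting inside an approximate midpoint set of $Y$; but the midpoints in $Y$ are controlled precisely by the modulus $\overline\delta_Y$, equivalently by $\tilde\delta_Y$. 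Pairing the Rademacher average $\|\eps_1 x_{n_1}+\cdots+\eps_k x_{n_k}\|_{L^1(\Omega,X)}$ with the $\Lambda_\phi$-geometry (via the comparison $\tfrac12\|\ \|_\phi\le\|\ \|_{\Lambda_\phi}\le\|\ \|_\phi$) converts the midpoint-set size into the left-hand side of the claimed inequality.

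The main obstacle, and the step I would spend the most care on, is the passage to an infinite subset via a Ramsey argument so that a single choice of indices $n_1<\cdots<n_k$ simultaneously realizes the near-spreading-model behaviour of $(x_n)$ and fits the approximate-midpoint geometry. Concretely, Proposition~\ref{midpoint} only guarantees points $x,y$ with $\|x-y\|$ large, not points of the special averaged form needed to extract the coordinates $e_i$; reconciling the abstract midpoint pair with the combinatorial structure of $G_k(\Mdb)$ requires a stabilization (iterated weak limits followed by Ramsey) exactly in the spirit of the proof of Theorem~\ref{KaRa2}. I would carry this out by fixing $k$, running the midpoint principle on the $\Lambda_\phi$-slice to locate a direction of size $\gtrsim\|e_1+\cdots+e_k\|_{\tilde\delta_Y}$, and then using weak-nullity to translate that midpoint displacement into a genuine tail block $\sum_i \eps_i x_{n_i}$, with the constant $c$ absorbing $Lip_\infty(g)$, its lower embedding constant, and the factor $\theta$. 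Once the indices are selected, the inequality $c\theta\|e_1+\cdots+e_k\|_{\tilde\delta_Y}\le\|\eps_1 x_{n_1}+\cdots+\eps_k x_{n_k}\|_{L^1(\Omega,X)}$ follows by combining these estimates.
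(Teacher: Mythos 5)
Your overall architecture (linearize the weakly null sequence into an Orlicz-normed copy of $c_{00}$, compose with the embedding, run the approximate midpoint principle of Proposition \ref{midpoint}, and let the asymptotic convexity of $Y$ produce the estimate) does match Kalton's strategy, but the key construction is miscalibrated in a way the argument cannot survive. You put the norm $\Lambda_\phi$ with $\phi=\tilde\delta_Y$ on the domain. In the actual proof the domain norm is $\Lambda_{F_k}$, where $F_k$ is the Orlicz function of \eqref{F_k} built from $\sigma_k=\sup\{\|\sum_{j=1}^k\eps_jx_{n_j}\|_{L^1(\Omega,X)}:\ n_1<\cdots<n_k\}$ --- that is, adapted to the given sequence, not to $Y$. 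This calibration is precisely what makes the linearization $T\xi=\sum_j\xi_j\eps_j\otimes x_j$ bounded, $\|T\xi\|\le 4\|\xi\|_{\Lambda_{F_k}}$, hence $g=f\circ T$ coarse Lipschitz with controlled constants, and what makes the approximate midpoint set $Mid(\eta,\zeta,1/k)$ contain the tail displacements $\xi+\tau\sigma_k^{-1}e_j$, $j\ge m$. With $\phi=\tilde\delta_Y$ there is no reason for $T$ to be bounded, and your claim that the cube inside the domain midpoint set has size ``governed by $\|e_1+\cdots+e_k\|_{\tilde\delta_Y}$'' is not how the modulus enters at all: the cube size is $\tau\sigma_k^{-1}$, and $\tilde\delta_Y$ enters only on the target side, through property \eqref{needed} of the two-dimensional norm $N^2_{\tilde\delta_Y}$ applied to bounded separated sequences in $Y$, pointwise in $\omega$, then integrated and combined with Jensen's inequality. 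Relatedly, your assertion that $\n\mapsto\sum a_ix_{n_i}$ distorts $d_a$ ``comparably to the $\Lambda_\phi$-norm'' is circular: a lower bound of exactly that kind is what the theorem is trying to establish.

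The final stabilization you plan (iterated weak limits plus Ramsey, as in Theorem \ref{KaRa2}) is also not how this theorem is proved, and it flags a second gap: you have no mechanism for producing, inside the abstract midpoint pair supplied by Proposition \ref{midpoint}, the separated family needed to exploit convexity. The paper gets it from the Rademacher structure itself: the functions $h_j=f\circ(\sum_{i=1}^{m-1}\xi_i\eps_i\otimes x_i+\tau\sigma_k^{-1}\eps_j\otimes x_j)$ lie in $Mid(g(\eta),g(\zeta),2/k)$, and by independence one may replace $\eps_j$ by $\eps_m$ to obtain functions $h'_j$ with $\|h'_i(\omega)-h'_j(\omega)\|\ge\theta\tau\sigma_k^{-1}-1$ for $i>j\ge m$ --- this is where the $\theta$-separation is spent, not in comparing $d_a$ with an Orlicz norm. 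Letting $\tau\to\infty$ then yields $\|e_1+\cdots+e_k\|_{\tilde\delta_Y}\le 32K\theta^{-1}\sigma_k$, and since $\sigma_k$ is a supremum over index choices, suitable $n_1<\cdots<n_k$ exist by near-optimality; no Ramsey extraction or spreading-model passage occurs in this proof (those belong to Theorem \ref{KaRa2} and to the stronger spreading-model statement). To repair your proposal you would need to replace $\phi=\tilde\delta_Y$ by the sequence-adapted $F_k$, drop the $G_k(\Mdb)$ detour, and route the modulus of $Y$ through the separated-sequence inequality \eqref{needed} rather than through domain-side midpoint geometry.
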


\begin{proof} For $k\in \Ndb$, let
$\sigma_k=\sup\left\{\|\sum_{j=1}^k\eps_jx_{n_j}\|_{L^1(\Omega,X)},\
\  n_1<n_2<\cdots<n_k\right\}.$ For each $k,$ define the Orlicz
function $F_k$ by
\begin{equation}\label{F_k}F_k(t)=\begin{cases} \sigma_kt/k, \qquad 0\le t\le 1/\sigma_k\\
t+1/k-1/\sigma_k,\qquad 1/\sigma_k\le
t<\infty.\end{cases}\end{equation} We introduce an operator
$T:c_{00}\to L_1(\Omega; X)$ defined by $T(\xi)
=\sum_{j=1}^{\infty}\xi_j\eps_j\otimes x_j.$  We omit the proof of
the fact that for all $\xi \in c_{00}$: $ \|T\xi\|\le
2\|\xi\|_{F_k}\le 4\|\xi\|_{\Lambda_{F_k}}.$

\noindent Assume now that $f:X\to Y$ is a map such that $f(0)=0$
and
$$ \|x-z\|-1\le \|f(x)-f(z)\|\le K\|x-z\|+1, \qquad x,z\in X.$$
We then define $g:(c_{00},\Lambda_{F_k})\to L_1(\Omega;Y)$ by
$g(\xi)=f\circ T\xi.$ It can be easily checked that $g$ is
coarse-Lipschitz and that $Lip_\infty(g)>0$. So we can apply the
approximate midpoint principle to the map $g$ and obtain that for
$\tau$ as large as we wish, there exist $\eta,\ \zeta\in c_{00}$
with $\|\eta-\zeta\|_{\Lambda_{N_k}}=2\tau$ such that
$$g(Mid(\eta,\zeta,1/k))\subset Mid(g(\eta),g(\zeta), 2/k).$$ Let
$m\in\mathbb N$ so that $\eta,\zeta\in span\,\{e_1,\ldots,e_{m-1}\}.$
It follows from the definition of $\Lambda_{N_k}$ that for $j\ge
m$: $\xi+\tau\sigma_k^{-1} e_j\in \mathrm{Mid}(\eta,\zeta,1/k)$,
where $\xi=\frac12(\eta+\zeta)$.

\noindent Thus the functions
$$h_j=f\circ(\sum_{i=1}^{m-1}\xi_i\eps_i\otimes x_i + \tau\sigma_k^{-1}\eps_j\otimes x_j),\ \ \ j\ge m$$
all belong to $Mid(g(\eta),g(\zeta), 2/k)$.  Since the $\eps_i$'s
are independent so do the functions
$$h'_j=f(\sum_{i=1}^{m-1}\xi_i\eps_i\otimes x_i + \tau\sigma_k^{-1}\eps_m\otimes x_j),\ \ \ j\ge m.$$
Therefore, for all $j\ge m$:
$$\|g(\eta)-h'_j\|+\|g(\zeta)-h'_j\|-\|g(\eta)-g(\zeta)\| \le
2k^{-1}\|g(\eta)-g(\zeta)\|.$$ We shall now use
without proof the following simple property of
$N=N_{\tilde\delta_Y}^2$: for any bounded $t$-separated sequence
in $Y$ and any $z\in Y$,
\begin{equation} \label{needed}\liminf_{n\to\infty}(\|y-y_n\|+\|z-y_n\|)
\ge N(\|y-z\|,t).\end{equation}

\noindent Note that for any $\omega\in\Omega$ we have
$$ \|h'_i(\omega)-h'_j(\omega)\|\ge \theta \tau\sigma_k^{-1}-1, \qquad i>j\ge m.$$
Hence, using \eqref{needed}, integrating and using Jensen's
inequality we get $$\liminf_{j\to\infty}
(\|g(\eta)-h'_j\|+\|g(\zeta)-h'_j\|) \ge N(\|g(\eta)-g(
\zeta)\|,\theta\tau\sigma_k^{-1}-1).$$

\noindent Now $\|g(\eta)-g(\zeta)\|\le 8K\tau+1$ and $N(t,1)-t$ is
a decreasing function so
$$ N_Y(8K\tau+1,\theta\tau\sigma_k^{-1}-1)-(8K\tau+1)\le \frac{2}{k}\|g(\eta)-g(\zeta)\|\le 2(8K\tau+1)k^{-1}.$$
Multiplying by $(8K\tau+1)^{-1}$ and letting $\tau$ tend to
$+\infty$ we obtain that
$$\tilde\delta (\frac{\theta}{16K\sigma_k})\le \frac{2}{k}\ \ \
{\rm\ and\ therefore}\ \ \ \tilde\delta (\frac{\theta}{32K\sigma_k})\le \frac{1}{k}$$ or
$$ \|e_1+\cdots+e_k\|_{\tilde\delta_Y}\le 32K\theta^{-1}\sigma_k.$$
\end{proof}

We will end this section by stating two theorems proved
by Kalton in \cite{K6}. Their proofs use, among many other ideas, the results we just explained on the stability of
asymptotic uniform convexity under coarse-Lipschitz embeddings.

\begin{Thm} {\bf(Kalton 2010)} Suppose $1<p<\infty.$  Then

\noindent (i) If $X$ is a Banach space which can be coarse
Lipschitz embedded in $\ell_p,$ then $X$ is linearly isomorphic to
a closed subspace of $\ell_p.$

\noindent (ii) If $X$ is a Banach space which is net equivalent to
a quotient of $\ell_p$ then $X$ is linearly isomorphic to a
quotient of $\ell_p.$

\noindent (iii) If $X$ can be coarse Lipschitz embedded into a
quotient of $\ell_p$ then $X$ is linearly isomorphic to a subspace
of a quotient of $\ell_p.$

\end{Thm}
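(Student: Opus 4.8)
The plan is to treat the three statements through a common nonlinear-to-linear core, differing only in the linear structure theorem invoked at the end. The starting observation is that for $1<p<\infty$ both $\ell_p$ and every quotient $Q$ of $\ell_p$ are reflexive and, passing to the dual where $Q^{*}$ is a subspace of $\ell_{p'}$ with $p'=p/(p-1)$, both asymptotically uniformly smooth and asymptotically uniformly convex of power type $p$: indeed $\ell_{p'}$ has both moduli of power type $p'$, these properties pass to the subspace $Q^{*}$, and they dualize back to give $\overline\rho_Q(t)\approx\overline\delta_Q(t)\approx t^{p}$, so that the Orlicz norms $\|\cdot\|_{\overline\rho_Q}$ and $\|\cdot\|_{\overline\delta_Q}$ are both equivalent to $\|\cdot\|_{p}$. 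Thus in all three cases the target is a reflexive space $Y$ with $\overline\rho_Y(t),\overline\delta_Y(t)\approx t^{p}$.

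First I would record the soft consequences of the hypothesis. A coarse Lipschitz embedding into the separable space $Y$ forces $X$ to be separable (an uncountable $\theta$-separated set in $X$ would map to an uncountable uniformly separated set in $Y$), and Theorem \ref{reflexive} forces $X$ to be reflexive. Next, applying Corollary \ref{KaRa3} with this $Y$ yields a uniform upper $\ell_p$-estimate on the spreading models of normalized weakly null sequences in $X$, while the asymptotic convexity theorem of \cite{K6} stated above yields the matching lower estimate. After an equivalent renorming this says exactly that $X$ is separable, reflexive, and asymptotically uniformly smooth and convex of power type $p$.

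At this point I would invoke linear structure theory. By the theorem of \cite{JLPS} quoted in Example (1), such an $X$ is isomorphic to a subspace of an $\ell_p$-sum $(\sum_n F_n)_{\ell_p}$ of finite-dimensional spaces. For part (i) the plan is to arrange the building blocks $F_n$ to be (uniformly isomorphic to) finite-dimensional subspaces of $X$; then Ribe's local Theorem \ref{Ribelocal}, applied to $X \buildrel {CL}\over {\hookrightarrow} \ell_p$, shows each $F_n$ is $K$-isomorphic to a subspace of $\ell_p$, whence $(\sum_n F_n)_{\ell_p}$ embeds into $(\sum_n \ell_p)_{\ell_p}=\ell_p$ and $X$ is a subspace of $\ell_p$. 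For part (iii) the identical scheme runs with $\ell_p$ replaced by $Q$: Ribe's theorem now places the $F_n$ uniformly inside the quotient $Q$ of $\ell_p$, and since an $\ell_p$-sum of finite-dimensional subspaces of quotients of $\ell_p$ is again a subspace of a quotient of $\ell_p$, one obtains $X\in SQ_p$. For part (ii) I would add a duality step: net equivalence is symmetric, so $X$ and $Q$ share the same finite-dimensional subspaces via Theorem \ref{Ribelocal} in both directions, and the power type $p$ moduli of $X$ dualize to power type $p'$ moduli of $X^{*}$; the goal is then to realize $X^{*}$ as a subspace of $\ell_{p'}$, which dually exhibits $X=X^{**}$ as a quotient of $\ell_p$.

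The main obstacle is the coordination demanded in the previous paragraph. The finite-dimensional pieces produced by the $\ell_p$-sum decomposition of \cite{JLPS} are a priori blocks of an auxiliary FDD on a superspace, not subspaces of $X$, whereas Ribe's local theorem controls only the finite-dimensional subspaces of $X$. The heart of Kalton's argument must therefore be to build the embedding so that these blocks are genuinely (uniformly isomorphic to) subspaces of $X$, extracted directly from the weakly null asymptotic structure of $X$; this is where reflexivity, the two-sided power type $p$ estimates, and a stabilization/Ramsey argument have to be combined. Two further technical points deserve care: the convexity estimates above are stated for equal coefficients, so promoting them to a genuine power type $p$ modulus of asymptotic convexity (general coefficients) requires the $1$-unconditionality of spreading models of weakly null sequences together with the upper estimate; and the quotient bookkeeping in part (ii) is the most delicate, since the hypothesis constrains $X$ rather than $X^{*}$, so controlling the local structure of $X^{*}$ well enough to place the pieces of its $\ell_{p'}$-sum decomposition inside $\ell_{p'}$ is where the real work lies.
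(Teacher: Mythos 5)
The paper itself states this theorem without proof, attributing it to \cite{K6} and noting only that the proof uses the stability results for asymptotic convexity and smoothness; so your proposal must stand on its own, and while its skeleton (nonlinear hypothesis $\Rightarrow$ separability and reflexivity $\Rightarrow$ two-sided asymptotic $\ell_p$ estimates $\Rightarrow$ linear structure theory plus Ribe's Theorem \ref{Ribelocal} and a blocking argument) does match the shape of Kalton's argument, there is a genuine gap at the pivot. Your sentence ``After an equivalent renorming this says exactly that $X$ is separable, reflexive, and asymptotically uniformly smooth and convex of power type $p$'' does not follow from what you cite. Corollary \ref{KaRa3} and the spreading-model convexity theorem control only spreading models, i.e.\ estimates along single weakly null sequences, whereas AUS/AUC renormability is equivalent (for separable reflexive spaces, via Odell--Schlumprecht \cite{OS}) to two-sided weakly null \emph{tree} estimates, a strictly stronger property: sequence-level asymptotic information does not self-improve to estimates along all branches of countably branching trees. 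The survey itself flags exactly this as open: Problem 2 asks whether coarse Lipschitz embeddability into a reflexive AUS space forces AUS renormability, and the discussion of Problem 6 states that deducing AUS renormability from uniform control of the graphs $G_k(\Ndb)$ --- a hypothesis even stronger than your spreading-model bounds --- would solve Problem 2. If your renorming step worked as written, it would settle these problems. What Kalton actually exploits in \cite{K6} is that Theorem \ref{KaRa2} applies to \emph{arbitrary} Lipschitz maps on $(G_k(\Mdb),d_a)$, not just to maps of the form $\n\mapsto\sum a_ix_{n_i}$ along a fixed sequence; evaluating along the nodes of weakly null trees and stabilizing by Ramsey arguments yields genuine upper and lower $\ell_p$ tree estimates, which is the input the linear machinery of \cite{OS} requires.

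Two further corrections of emphasis. What you call ``the heart of Kalton's argument'' --- realizing the blocks $F_n$ as uniform copies of finite-dimensional subspaces of $X$ --- is in fact the classical part: once $X$ embeds into some $(\sum_n F_n)_{\ell_p}$ (which, absent your renorming step, should come from the tree-estimate theorems of \cite{OS} rather than from \cite{JLPS}), a standard Johnson--Zippin type ``killing the overlap'' blocking produces an embedding into an $\ell_p$-sum of finite-dimensional subspaces of $X$, and then your use of Theorem \ref{Ribelocal} correctly finishes (i) and (iii). Note that this local ingredient is genuinely indispensable: $(\sum_n \ell_\infty^n)_{\ell_p}$ satisfies two-sided $\ell_p$ tree estimates yet is not a subspace of $\ell_p$, so no purely asymptotic hypothesis can conclude. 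Finally, in (ii) your duality step is underpowered for the same reason as the main gap: one needs to dualize the \emph{tree} estimates (two-sided $\ell_p$ estimates on the reflexive space $X$ passing to two-sided $\ell_{p'}$ estimates on $X^*$), which belongs to the Odell--Schlumprecht framework and is not a consequence of dualizing moduli or of the spreading-model statements quoted in this survey; the local structure of $X^*$ can then be controlled by local reflexivity from the two-sided finite representability that net equivalence provides, as you indicate.
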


\noindent {\bf Remark.} On the other hand, Kalton constructed
in \cite{K5} two subspaces (respectively quotients) of $\ell_p$
($1<p\neq 2<\infty$) which are uniformly homeomorphic but not
linearly isomorphic.

\medskip\noindent {\bf Problem 4.} It is not known whether a Banach space which is net equivalent
to a subspace (respectively a quotient) of $c_0$ is linearly
isomorphic to a subspace (respectively a quotient) of $c_0$.

\noindent As we have already seen, a Banach space Lipschitz-isomorphic to a subspace of $c_0$ is linearly isomorphic to a subspace of $c_0$ \cite{GKL}. It is not known if the class of Banach spaces linearly isomorphic to a quotient of $c_0$ is stable under Lipschitz-isomorphisms. However, this question was almost solved by Dutrieux who proved in \cite{Du1} that if a Banach space is Lipschitz-isomorphic to a quotient of $c_0$ and has a dual with the approximation property, then it is linearly isomorphic to a quotient of $c_0$.

\medskip Finally, let us point one last striking
consequence of more general results from \cite{K6}.

\begin{Thm} {\bf(Kalton 2010)} Suppose $1<p,r<\infty$ are such
that $p<\min (r,2)$ or $p>\max(r,2)$, then the space
$(\sum_{n=1}^{\infty}\ell_r^n)_{\ell_p}$ has a unique net
structure.
\end{Thm}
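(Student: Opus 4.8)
The plan is to run the same three-step scheme that yielded the net-uniqueness of $\ell_{p_1}\oplus\cdots\oplus\ell_{p_n}$ and of $\ell_p\oplus\ell_q$: first record the asymptotic moduli of $X=(\sum_n\ell_r^n)_{\ell_p}$, then transfer them to any net-equivalent space $Y$, and finally feed this asymptotic (and local) information into a linear classification. By Example (1), $X$ is reflexive and satisfies $\overline\rho_X(t)=\overline\delta_X(t)=(1+t^p)^{1/p}-1$, so $X$ is simultaneously AUS and AUC of power type $p$. Suppose $Y\buildrel N\over\sim X$; since net equivalence produces coarse Lipschitz embeddings in both directions, I first apply Theorem \ref{reflexive}, with $X$ playing the role of the reflexive AUS target, to conclude that $Y$ is reflexive.

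Next I would transfer the two moduli. Applying Theorem \ref{GKL} to the net equivalence furnishes, for every $\eps>0$, an equivalent norm on $Y$ with $\overline\rho_Y(t)\le C_\eps t^{p-\eps}$; equivalently, by Corollary \ref{KaRa3}, every spreading model $(e_i)$ of a normalized weakly null sequence in $Y$ satisfies an upper $\ell_p$-estimate. Dually, applying the convexity stability theorem of Section \ref{uac} to the coarse Lipschitz embedding of $Y$ into $X$, and using that $\overline\delta_X$ is of power type $p$ so that $\|e_1+\cdots+e_k\|_{\overline\delta_X}\approx k^{1/p}$, yields the matching lower estimate $k^{1/p}\le C\|e_1+\cdots+e_k\|_S$. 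Thus $Y$ carries, up to arbitrarily small $\eps$, an equivalent AUS norm and an equivalent AUC norm both of power type $p$, so $Y$ is reflexive and asymptotically $\ell_p$ from both sides. In particular the Johnson--Lindenstrauss--Preiss--Schechtman embedding theorem quoted in Example (1) (see \cite{JLPS}) places $Y$ isomorphically inside an $\ell_p$-sum of finite dimensional spaces.

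The final step is linear. By the local Ribe theorem (Theorem \ref{Ribelocal}) $Y$ has, uniformly, the same finite dimensional subspaces as $X$, so it contains the blocks $\ell_r^n$ with uniformly bounded distortion, while the previous paragraph pins its asymptotic behaviour to $\ell_p$. Under the hypothesis $p<\min(r,2)$ or $p>\max(r,2)$ this rigidly determines the isomorphism type: I would show that $Y$ embeds as a complemented subspace of $X$ and that $X$ embeds as a complemented subspace of $Y$, and then invoke Pe\l czy\'nski's decomposition method, which applies because $X\simeq(\sum_n X)_{\ell_p}$, to obtain $Y\simeq X$.

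I expect this linear classification to be the main obstacle. The transfer of moduli only shows that $Y$ resembles $X$ asymptotically and locally; producing the two \emph{complemented linear} embeddings out of the coarse data is delicate, and it is precisely here that the arithmetic condition on $p$ and $r$ enters. Its role is to forbid a competing Hilbertian direction: were $p$ to lie strictly between $r$ and $2$, an $\ell_2$ factor could coexist with the asymptotic $\ell_p$ structure and the uniqueness would fail, which is exactly the obstruction that leaves the analogous questions for $\ell_p\oplus\ell_2$ and for $L_p$ open (Problem 1). Carrying out the complementation under the stated hypothesis relies on the finer linear results of \cite{K6}.
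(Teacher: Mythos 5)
The survey states this theorem without proof, presenting it as a ``consequence of more general results from \cite{K6}'', so there is no in-paper argument to compare with; judged on its own terms, your outline follows the right general pattern (transfer of asymptotic information, then a linear classification), but it has genuine gaps at both of its decisive junctures. First, the convexity transfer. The theorem of Section \ref{uac} gives only the constant-coefficient spreading-model inequality $\|e_1+\cdots+e_k\|_{\overline\delta_X}\le C\|e_1+\cdots+e_k\|_S$, which for the embedding of $Y$ into $X$ yields $k^{1/p}\le C\|e_1+\cdots+e_k\|_S$ --- a statement about spreading models, not a renorming. Nothing quoted in the survey converts such estimates into an equivalent AUC norm of power type $p$ (spreading models only see iterated limits along a single sequence, while $\overline\delta$ quantifies over all finite-codimensional subspaces), so your sentence ``Thus $Y$ carries \dots an equivalent AUS norm and an equivalent AUC norm both of power type $p$'' is unsupported. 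Moreover, on the smoothness side Theorem \ref{GKL} only produces power type $p-\eps$, and by the remark following it this loss is optimal (Kalton's example in \cite{K5}); since the JLPS theorem as quoted in Example (1) requires moduli of asymptotic convexity \emph{and} smoothness of the same power type $p$, you cannot invoke it to place $Y$ inside an $\ell_p$-sum of finite-dimensional spaces. (Corollary \ref{KaRa3} does give upper $\ell_p$ spreading-model estimates without an $\eps$-loss, but that again is spreading-model data, not a modulus.)

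Second, the linear endgame. Theorem \ref{Ribelocal} gives uniform embeddings of the $\ell_r^n$ into $Y$ but with no complementation whatsoever, and conversely it does not make $Y$ a complemented subspace of $X$; producing the two \emph{complemented} linear embeddings needed for Pe\l czy\'nski's decomposition (which is indeed available here, since $X\simeq(\sum_n X)_{\ell_p}$) is precisely where the hypothesis $p<\min(r,2)$ or $p>\max(r,2)$ must act, and your text explicitly defers this step to ``the finer linear results of \cite{K6}''. Since the theorem under review is itself stated in the survey as a consequence of those results, your argument is circular at exactly its hardest point: it reduces the statement to the main theorems of the paper being cited rather than proving it. Your heuristic for the excluded range $\min(r,2)\le p\le\max(r,2)$ (a competing Hilbertian direction, as in the open problems for $\ell_p\oplus\ell_2$ and $L_p$, Problem 1) is plausible as motivation but does no mathematical work. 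To turn the sketch into a proof you would need (a) a mechanism upgrading two-sided asymptotic $\ell_p$-information in a reflexive space to a linear embedding of $Y$ into an $\ell_p$-sum of finite-dimensional spaces without the $\eps$-loss --- this is one of the actual ``more general results'' of \cite{K6} --- and (b) the complementation argument in which the arithmetic condition on $p$ and $r$ genuinely enters; neither is supplied by the results quoted in this survey.
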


\section{Embeddings of special graphs into Banach spaces}

In this  section we will study special metric graphs or trees that
are of particular importance for the subject. More precisely we
will study the Banach spaces in which they embed. This will allow
us to characterize some linear classes of Banach spaces by a
purely metric condition of the following type: given a metric
space $M$ (generally a graph), what are the Banach  spaces $X$ so
that $M \buildrel {Lip}\over {\hookrightarrow} X$. Or, given a
family $\cal M$ of metric spaces, what are the Banach spaces $X$
for which there is a constant $C\ge 1$ so that for all $M$ in
$\cal M$, $M \buildrel {C}\over {\hookrightarrow}  X$ (i.e. $M$ Lipschitz embeds into $X$ with distorsion at most $C$).

Most of the time these linear classes were already known to be stable under Lipschitz or
coarse-Lipschitz embeddings, when such characterizations were proved. However, we will show one situation (see Corollary \ref{beta}) where this
process yields new results about such stabilities.

The section will be organized by the nature of the linear
properties that can be characterized by such embedding conditions.

\subsection{Embeddings of special metrics spaces and local properties of Banach
spaces}

We already know (see Theorem \ref{Ribelocal}) that local
properties of Banach spaces are preserved under coarse Lipschitz
embeddings. This theorem gave birth to the ``Ribe program'' which
aims at looking for metric invariants that characterize local
properties of Banach spaces. The first occurence of the ``Ribe
program'' is Bourgain's metric characterization of
superreflexivity given in \cite{Bo1}. The metric invariant
discovered by Bourgain is the collection of the hyperbolic dyadic
trees of arbitrarily large height $N$. We denote
$\Delta_{0}=\{\emptyset\}$, the root of the tree. Let
$\Omega_{i}=\{-1,1\}^{i}$,
$\Delta_{N}=\bigcup_{i=0}^{N}\Omega_{i}$ and
$\Delta_\infty=\bigcup_{i=0}^{\infty}\Omega_{i}$. Then we equip
$\Delta_\infty$, and by restriction every $\Delta_N$, with the
hyperbolic distance $\rho$, which is defined as follows. Let $s$
and $s'$ be two elements of $\Delta_\infty$ and let $u\in
\Delta_\infty$ be their greatest common ancestor. We set
$$\rho(s,s')=|s|+|s'|-2|u|=\rho(s,u)+\rho(s',u).$$

Bourgain's characterization is the following:

\begin{Thm}\label{SRBo} {\bf (Bourgain 1986)} Let $X$ be a Banach space. Then $X$ is not superreflexive if and only
if there exists a constant $C \ge 1$ such that for all $N\in
\Ndb$, $(\Delta_N,\rho)\buildrel {C}\over {\hookrightarrow} X$.
\end{Thm}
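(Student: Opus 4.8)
The plan is to prove the two implications separately, using James's combinatorial characterization of superreflexivity for the constructive direction and the Enflo--Pisier renorming theorem for its converse.

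\smallskip\noindent\emph{The direction ``not superreflexive $\Rightarrow$ uniform embedding''.} Suppose $X$ is not superreflexive. By James's theorem, for every $\theta\in(0,1)$ and every integer $M$ there exist $x_1,\dots,x_M\in B_X$ and $f_1,\dots,f_M\in B_{X^*}$ with $f_i(x_j)=\theta$ for $i\le j$ and $f_i(x_j)=0$ for $i>j$. Fix $N$, take $M=2^{N+1}$ (at least the number of non-root nodes of $\Delta_N$), and label each node $u\in\Delta_N\setminus\{\emptyset\}$ by its rank $r(u)\in\{1,\dots,M\}$ in a depth-first (pre-order) traversal, so that for any node $w$ the entire subtree below one child of $w$ is ranked before the entire subtree below the other child. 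I would then define $\Phi:\Delta_N\to X$ by $\Phi(t)=\sum_{j=1}^{|t|}x_{r(u_j)}$, where $\emptyset,u_1,\dots,u_{|t|}=t$ is the branch from the root to $t$.

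\smallskip\noindent\emph{The estimates.} For $t,t'$ with greatest common ancestor $w$ at level $m$, the shared initial segment cancels and $\Phi(t)-\Phi(t')=\sum_{i\in A}x_i-\sum_{i\in B}x_i$, where $A$ (resp. $B$) is the set of ranks of the strict descendants of $w$ on the branch to $t$ (resp. $t'$). These are disjoint with $|A|=|t|-m$, $|B|=|t'|-m$, so $|A|+|B|=\rho(t,t')$, and the triangle inequality gives $\|\Phi(t)-\Phi(t')\|\le\rho(t,t')$. For the lower bound, if one of $t,t'$ equals $w$ then $B=\emptyset$ and $f_{\min A}$ yields $\|\Phi(t)-\Phi(t')\|\ge\theta|A|=\theta\rho(t,t')$; otherwise $t,t'$ lie in different subtrees of $w$, so the depth-first labelling forces $\max A<\min B$ (up to swapping). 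Then the tail functional $f_{\min B}$ gives $\ge\theta|B|$, while the initial-segment functional $\tfrac12(f_1-f_{\max A+1})$, of norm $\le1$, gives $\ge\tfrac{\theta}{2}|A|$. Hence $\|\Phi(t)-\Phi(t')\|\ge\tfrac{\theta}{2}\max(|A|,|B|)\ge\tfrac{\theta}{4}\rho(t,t')$, so $\Phi$ embeds $(\Delta_N,\rho)$ with distortion $\le 4/\theta$, uniformly in $N$; letting $\theta\to1$ produces the desired constant $C$.

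\smallskip\noindent\emph{The converse, and the main obstacle.} For the other direction I would argue contrapositively: assuming $X$ superreflexive, I must show the distortion needed to embed $\Delta_N$ is unbounded in $N$. By the Enflo--Pisier theorem, $X$ admits an equivalent norm that is uniformly convex of power type $q$, i.e. $\delta_X(\varepsilon)\ge\varepsilon^q/K$ for some $q\ge2$ and $K\ge1$. The hard part will be the quantitative obstruction showing that such a space cannot contain the hyperbolic trees with bounded distortion: at every branching the images of the two child-subtrees are quantitatively pulled together. Concretely, given $\Phi:\Delta_N\to X$ with $\tfrac1D\rho\le\|\Phi(s)-\Phi(t)\|\le\rho(s,t)$, I would apply the power-type convexity inequality to the images of symmetric pairs of descendants hanging off each node, extracting at each of the $\sim\log_2 N$ dyadic scales a definite contraction, and then sum these gains along a branch via a telescoping, martingale-type estimate to obtain $D\gtrsim(\log N)^{1/q}$. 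Since this tends to infinity it contradicts any uniform bound. The genuine difficulty is precisely this multi-scale summation: ensuring that the per-scale convexity gains accumulate rather than cancel, which is exactly where the power-type (as opposed to merely strict) modulus is indispensable and where Bourgain's original self-improvement argument — or, in modern language, the passage through Markov $q$-convexity — carries the load.
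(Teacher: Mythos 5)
The paper itself gives no proof of this statement---it is quoted from Bourgain's article \cite{Bo1}---so your attempt can only be measured against the original argument. Your forward implication (non-superreflexivity implies uniform embeddability of the $\Delta_N$) is complete, correct, and is essentially Bourgain's own construction: James's arrays $f_i(x_j)=\theta$ for $i\le j$, $f_i(x_j)=0$ for $i>j$, summed along branches with a depth-first preorder labelling. The verification is sound: the preorder guarantees $\max A<\min B$ when $t,t'$ split at $w$, the functional $f_{\min B}$ recovers $\theta|B|$, the auxiliary functional $\tfrac12(f_1-f_{\max A+1})$ recovers $\tfrac{\theta}{2}|A|$, and together with the trivial Lipschitz upper bound this gives distortion at most $4/\theta$, uniformly in $N$, as required.

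The converse, however, is where the theorem's real content lies, and there your text is a plan rather than a proof. The gap is concrete: after the Enflo--Pisier renorming ($\delta_X(\varepsilon)\ge \varepsilon^q/K$), applying uniform convexity ``to symmetric pairs of descendants hanging off each node'' yields nothing at a single scale. The convexity inequality pulls the image of a node toward the midpoint of the images of a symmetric pair of its descendants only when the corresponding paths are nearly taut in the image, i.e.\ when $\|\Phi(w)-\Phi(s)\|$ is close to $\|\Phi(w)-\Phi(m)\|+\|\Phi(m)-\Phi(s)\|$; a distortion-$D$ embedding with $D$ bounded away from $1$ leaves multiplicative slack of order $D$ in every such triple, which absorbs any fixed convexity gain. (That no single-scale argument can succeed is shown by the fact that $\Delta_N$ does embed into the $2$-convex space $\ell_2$ with distortion of order $\sqrt{\log N}$.) Moreover, the per-scale ``contractions'' you propose to telescope are attached to different quadruples of points, and there is no single monotone quantity along a branch whose increments they are; a naive summation simply does not close. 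Both defects are repaired only by the ingredient you explicitly defer: Bourgain's self-improvement iteration, which shows roughly that doubling the height of the tree forces a definite additive increase in the $q$-th power of the vertical distortion of a suitably chosen subtree, repeated over the $\sim\log N$ dyadic scales to give $D\gtrsim(\log N)^{1/q}$---or, in the modern packaging you mention, the Markov $q$-convexity inequality of Lee, Naor and Peres, which encodes exactly this multi-scale bookkeeping in a single summed inequality. Since that step is named but not proved, what you have established is one implication only; the implication for which the theorem is celebrated is, as your own text acknowledges, cited rather than proved.
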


\noindent {\bf Remark.} It has been proven by Baudier in \cite{Ba2} that this
is also equivalent to the metric embeddability of the infinite
hyperbolic dyadic tree $(\Delta_\infty,\rho)$. It should also be
noted that in \cite{Bo1} and \cite{Ba2}, the embedding constants
are bounded above by a universal constant.

\medskip We also wish to mention that Johnson and Schechtman \cite{JS} recently
characterized the super-reflexivity through the non embeddability
of other graphs such as the ``diamond graphs'' or the Laakso
graphs. We will only give an intuitive description of the diamond
graphs. $D_0$ is made of two connected vertices (therefore at
distance 1), that we shall call $T$ (top) and $B$ (bottom). $D_1$
is a diamond, therefore made of four vertices $T$, $B$, $L$ (left)
and $R$ (right) and four edges : $[B,L],\ [L,T],\ [T,R]$ and
$[R,B]$. Assume $D_N$ is constructed, then $D_{N+1}$ is obtained
by replacing each edge of $D_N$ by a diamond $D_1$. The distance
on $D_{N+1}$ is the path metric of this new discrete graph. The
graph distance on a diamond $D_N$ will be denoted by $d$. The
result is the following.

\begin{Thm}{\bf (Johnson, Schechtman 2009)} Let $X$ be a Banach space.
Then $X$ is not super-reflexive if and only if there is a constant
$C\ge 1$ such that for all $N\in \Ndb$, $(D_N,d) \buildrel
{C}\over {\hookrightarrow} X$.
\end{Thm}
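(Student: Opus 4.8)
The plan is to prove the two implications by different means: one direction via an \emph{approximate midpoint} obstruction that any embedding of the diamonds must violate in a uniformly convex space, the other via a direct construction of the embeddings from James' characterization of non-super-reflexivity, parallel to Bourgain's construction of the trees $(\Delta_N,\rho)$ used for Theorem \ref{SRBo}.

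For the implication ``$D_N$ embed uniformly $\Rightarrow X$ is not super-reflexive'' I would argue contrapositively. Suppose $X$ is super-reflexive. By the classical Enflo--Pisier renorming theorems, $X$ admits an equivalent norm that is uniformly convex of power type $q$ for some $q\ge 2$; equivalently there is $c>0$ so that
\[
\Big\|\tfrac{x+y}{2}\Big\|^{q}+c\,\Big\|\tfrac{x-y}{2}\Big\|^{q}\le \tfrac12\big(\|x\|^{q}+\|y\|^{q}\big)\qquad(x,y\in X).
\]
The relevant geometric feature of the diamonds is that inside each elementary copy of $D_1$ the side vertices $L,R$ are \emph{exact} midpoints of the poles $T,B$, while nevertheless $d(L,R)=d(T,B)$. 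If $f:D_N\to X$ has distortion at most $C$, then applying the displayed inequality with $x=a-u$, $y=b-u$, where $a,b$ are the images of the poles and $u$ the image of a side vertex of a sub-diamond, gives
\[
\Big\|\tfrac{a+b}{2}-u\Big\|^{q}+c\,\Big\|\tfrac{a-b}{2}\Big\|^{q}\le \tfrac{\|a-u\|^{q}+\|b-u\|^{q}}{2},
\]
so that $u$ is forced close to the true midpoint of $a,b$ at the cost of a definite multiple of the $q$-th power of the diagonal. Summing this fork inequality over all sub-diamonds at all $N$ scales, the midpoint terms telescope while the diagonal terms accumulate, forcing the distortion of any embedding of $D_N$ to grow at least like $N^{1/q}$. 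Since a uniform embedding keeps the distortion bounded, no single $C$ can work for all $N$.

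For the converse, ``$X$ not super-reflexive $\Rightarrow D_N$ embed uniformly'', I would build the embeddings directly. By James' quantitative characterization of the failure of super-reflexivity, for every $n$ there exist $x_1,\dots,x_n\in B_X$ whose signed partial sums $\big\|\sum_{i\le k}x_i-\sum_{i>k}x_i\big\|$ stay $\ge\theta n$ for a fixed $\theta>0$, uniformly in $1\le k\le n$. As in Bourgain's treatment of the trees, I would assign to each vertex of $D_N$ an appropriate partial sum of such a James sequence, letting the left/right branching of the diamonds dictate the signs; the James lower bound then yields the lower Lipschitz estimate and the triangle inequality the upper one, both with constants independent of $N$. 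This produces embeddings $(D_N,d)\buildrel{C}\over{\hookrightarrow}X$ with a uniform $C$.

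The main obstacle is the quantitative self-improvement in the first implication. Plain uniform convexity of a single configuration $D_1$ is \emph{not} enough to reach a contradiction, since there $\|f(L)-f(R)\|$ and $\|f(T)-f(B)\|$ are of the same order; one must pass to the power-type modulus supplied by Pisier and carefully book the telescoping sum against the $4^{k}$-fold branching at scale $k$, so that the cumulative diagonal terms dominate and yield the sharp $N^{1/q}$ (that is, $(\log|D_N|)^{1/q}$) lower bound on distortion. Getting this bookkeeping right, rather than the local geometry of a single diamond, is the technical heart of the argument.
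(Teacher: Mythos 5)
The survey states this theorem without proof, giving only the citation \cite{JS}, so your proposal has to be measured against the original Johnson--Schechtman argument rather than anything in the paper. Your first implication (super-reflexivity excludes uniformly bi-Lipschitz copies of the $D_N$) is correct and is the standard route: Pisier's renorming supplies the power-type-$q$ fork inequality exactly as you write it, the identity $d(L,R)=d(T,B)$ inside each sub-diamond makes the diagonal term a definite fraction of the local scale, and averaging the fork inequality over the two side vertices, then summing over the $4^k$ sub-diamonds at each of the $N$ scales, gives the distortion lower bound of order $N^{1/q}$. Your remark that uniform convexity applied to a single diamond yields nothing, and that the multiscale accumulation is the whole point, is exactly right.

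The converse direction, however, has a genuine gap, located precisely where diamonds differ from the trees $(\Delta_N,\rho)$ of Theorem \ref{SRBo}. A tree is acyclic, so in \cite{Bo1} signs can be chosen independently along distinct branches; $D_N$ is full of cycles, and your map must be \emph{well-defined} on them: the left path and the right path of every sub-diamond share both endpoints. If $f$ assigns partial sums $\sum_{i\le j}\epsilon_i x_i$ of a single James sequence with signs dictated by branching, then equality of the two computed values at the top pole forces $\sum_i\epsilon_i x_i=\sum_i\epsilon_i' x_i$ over the levels of that sub-diamond; for one-sign-change patterns this forces the patterns to coincide (the intermediate sums $\sum_{k<i\le k'}x_i$ do not vanish), and for other patterns, such as alternating ones, James' condition gives no lower bound at all on the midpoint separation $\|f(L)-f(R)\|$, so the diagonal estimate collapses. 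What is actually needed is, coherently at every scale, a \emph{pair} of discrete paths in $X$ with common endpoints whose midpoints are far apart: for a single diamond one can take $u_k=\sum_{i\le k}x_i$ against the straight path $v_k=\frac{k}{n}\sum_{i=1}^{n}x_i$, whose midpoints are
$$\Big\|u_{n/2}-v_{n/2}\Big\|=\frac12\Big\|\sum_{i\le n/2}x_i-\sum_{i>n/2}x_i\Big\|\ \ge\ \frac{\theta n}{2}$$
apart (note $v$ is not a $\pm1$ partial-sum path, so your sign formalism cannot even express it), and then one must run a recursion replacing each increment of each path by a fresh such configuration without spoiling the estimates already secured at coarser scales. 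That recursion, carried out in \cite{JS} (and streamlined in later work via Brunel--Sucheston equal-signs-additive sequences), is the technical heart of the embedding direction and is absent from your sketch.
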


\medskip The metric characterization of the linear type of a
Banach space has been initiated by Enflo in \cite{E2} and
continued by Bourgain, Milman and Wolfson in \cite{BMW}. Let us
first describe a concrete result from \cite{BMW}. For $1\le p\le
2$ and $n\in \Ndb$, $H_p^n$ denotes the set $\{0,1\}^n$
equipped with the metric induced by the $\ell_p$ norm. The metric
space $H_1^n$ is called the Hamming cube. One of their results is
the following.

\begin{Thm} {\bf (Bourgain, Milman, Wolfson 1986)} Let $X$ be a Banach space and $1\le p\le 2$.
Define $p_X$ to be the supremum
 of all $r$'s such that $X$ is of linear type $r$. Then, the following assertions are
 equivalent.

\smallskip (i) $p_X\le p$.

\smallskip (ii) There is a constant $C\ge 1$ such that for all
$n\in \Ndb$, $H_p^n\buildrel {C}\over {\hookrightarrow} X$.

\smallskip In particular, $X$ is of trivial type if and only if $H_1^n\buildrel {C}\over {\hookrightarrow}
X$, for all $n\in \Ndb$ and for some universal constant $C\ge 1$.

\end{Thm}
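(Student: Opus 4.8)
The plan is to route the equivalence through the \emph{Enflo (metric) type} of $X$, which is the nonlinear invariant that simultaneously obstructs the embedding and is governed by the linear type. Recall that a metric space $(M,d)$ has Enflo type $r$ with constant $T$ if for every $n$ and every $g\colon\{-1,1\}^n\to M$ one has
$$\mathbb{E}_\varepsilon\, d\big(g(\varepsilon),g(-\varepsilon)\big)^r\le T^r\sum_{i=1}^n \mathbb{E}_\varepsilon\, d\big(g(\varepsilon),g(\varepsilon^{(i)})\big)^r,$$
where $\varepsilon^{(i)}$ denotes $\varepsilon$ with its $i$-th coordinate reversed. Testing this on the linear maps $g(\varepsilon)=\sum_i\varepsilon_ix_i$ shows at once that Enflo type $r$ implies linear (Rademacher) type $r$. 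The substantial input I will need is the converse at the level of exponents: if $X$ has Rademacher type $r$, then $X$ has Enflo type $r'$ for every $r'<r$. Granting this, the two suprema coincide, so $p_X=\sup\{r:X\text{ has Enflo type }r\}$, and both implications become accessible.

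For (ii) $\Rightarrow$ (i) I argue by contraposition. Assume $p_X>p$, so $X$ has Rademacher type $r$ for some $r>p$ and hence Enflo type $r'$ for some $p<r'<r$. Let $f\colon H_p^n\to X$ be any embedding of distortion at most $C$, normalized so that $s\,d_p(\sigma,\tau)\le\|f(\sigma)-f(\tau)\|\le Cs\,d_p(\sigma,\tau)$. Identifying $H_p^n$ with $\{-1,1\}^n$, two antipodal points differ in all $n$ coordinates, so their $\ell_p$--Hamming distance is $n^{1/p}$, while an edge has distance $1$. Feeding $f$ into the Enflo type $r'$ inequality gives $s^{r'}n^{r'/p}\le T^{r'}\,n\,(Cs)^{r'}$, that is $n^{r'/p-1}\le (TC)^{r'}$. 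Since $r'>p$ the left-hand side tends to infinity, so no uniform bound on the distortion can exist; this is exactly the failure of (ii). Hence (ii) forces $p_X\le p$.

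For (i) $\Rightarrow$ (ii) I invoke the linear structure theory. By the Maurey--Pisier theorem $\ell_{p_X}$ is finitely representable in $X$, and since finite-dimensional subspaces of $L_{p_X}$ embed almost isometrically into some $\ell_{p_X}^N$, the whole space $L_{p_X}$ is finitely representable in $X$. The hypothesis $p_X\le p$ together with $p\le 2$ gives $1\le p_X\le p\le 2$, so by the classical theory of $L_q$-embeddings $\ell_p$ embeds \emph{isometrically} into $L_{p_X}$; in particular the vertex set $\{0,1\}^n\subset\ell_p^n$, which is exactly $H_p^n$, embeds isometrically into $L_{p_X}$. Composing with finite representability, $H_p^n$ embeds into $X$ with distortion at most $1+\varepsilon$ for every fixed $\varepsilon>0$, uniformly in $n$, which yields (ii) (indeed with constant arbitrarily close to $1$).

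The main obstacle is the exponent-converse lemma, that Rademacher type $r$ yields Enflo type $r'$ for $r'<r$; everything else is either a direct cube computation or a citation of known linear theory. I expect to prove it by expanding an arbitrary $g\colon\{-1,1\}^n\to X$ into its Walsh--Fourier chaos and controlling the higher-order terms using the type-$r$ hypothesis level by level, the strict gap between $r$ and $r'$ being precisely what absorbs the losses coming from the higher chaos. Finally, the special case $p=1$ recovers the ``in particular'' assertion: there $p_X\le 1$ means $p_X=1$, i.e. trivial type, and the isometric inclusions $H_1^n\subset\ell_1^n\subset L_1$ make the $L_{p_X}$-embedding step trivial.
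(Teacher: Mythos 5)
The paper offers no proof of this statement: it is quoted verbatim from Bourgain--Milman--Wolfson \cite{BMW} as part of the survey of the Ribe program, so there is no internal argument to measure you against, and your proposal must be judged on its own terms. Judged so, its architecture is correct, and your direction (i) $\Rightarrow$ (ii) is complete and clean: the Maurey--Pisier theorem \cite{MP} gives that $\ell_{p_X}$ is finitely representable in $X$, the isometric embedding $\ell_p\hookrightarrow L_{p_X}$ for $p_X\le p\le 2$ via $p_X$-stable random variables is classical, and since $H_p^n$ is a finite subset of $\ell_p^n$ it sits inside a finite-dimensional subspace of $L_{p_X}$, so finite representability yields distortion $1+\eps$ uniformly in $n$ --- in fact better than the stated constant $C$. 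The cube computation in (ii) $\Rightarrow$ (i) is also right: the diagonal has length $n^{1/p}$ against $n$ unit edges, and the Enflo type $r'$ inequality forces $n^{r'/p-1}\le (TC)^{r'}$, which is absurd for large $n$ when $r'>p$.

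The one genuine caveat is the lemma you defer: that Rademacher type $r$ implies Enflo type $r'$ for every $r'<r$. This is a true theorem, but it is the entire difficulty of that direction --- it is due to Pisier (1986, after the Bourgain--Milman--Wolfson paper), proved essentially by the Walsh-chaos/hypercube-semigroup scheme you sketch, with the losses from higher chaos absorbed by the gap $r'<r$ exactly as you predict (the sharp case $r'=r$ is the much more recent Ivanisvili--van Handel--Volberg theorem). Writing ``I expect to prove it by expanding into chaos'' is a promissory note for the hardest step, not a proof; you should either cite Pisier's theorem explicitly or be prepared for a substantial piece of Fourier analysis on the cube. It is also worth knowing that \cite{BMW} themselves avoided this: they introduced their own, weaker notion of metric (BMW) type, for which the passage from linear type $r$ to metric type $r'<r$ can be carried out by a more elementary self-improvement/iteration argument on subcubes of $\{0,1\}^n$, and that weaker notion already suffices to obstruct the uniform embeddings of $H_p^n$. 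So your route buys a conceptually cleaner invariant (Enflo type, which is now the standard one) at the price of a deeper imported theorem, while the original route buys elementarity at the price of an ad hoc definition; both are legitimate, but as written your proof is complete only modulo Pisier's theorem.
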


The fundamental problem of defining a notion of type for metric
spaces is behind this result. Of course we expect such a notion to
coincide with the linear type for Banach spaces and to be stable
under reasonable non linear embedddings. This program was achieved
with the successive definitions of the Enflo type \cite{E2}, the
Bourgain-Milman-Wolfson type \cite{BMW} and finally the scaled
Enflo type introduced by Mendel and Naor in \cite{MN1}. An
even more difficult task was to define the right notion of metric
cotype. This was achieved by Mendel and Naor in \cite{MN2}.
We will not address this subject in this survey, but we strongly
advise the interested reader to study these fundamental papers.

\medskip Let us also describe a simpler metric characterization of the
Banach spaces without (linear) cotype. First let us recall that a
metric space $(M,d)$ is called {\it locally finite} if all its
balls of finite radius are finite. It is of {\it bounded geometry}
if for any $r>0$ there exists $C(r) \in \Ndb$ such that the
cardinal of any ball of radius $r$ is less than $C(r)$. We will
now construct a particular metric space with bounded geometry. For
$k,n\in \Ndb$, denote
$$M_{n,k}=knB_{\ell_\infty^n} \cap n\Zdb^n.$$
Let us enumerate the $M_{n,k}$'s: $M_1,...,M_i,..$ with
$M_i=M_{n_i,k_i}$ in such a way that diam$(M_i)$ is non
decreasing. Note that $\lim_{i} diam(M_i) =+\infty$. Then let
$M_0$ be the disjoint union of the $M_i$'s ($i\ge 1$) and define
on $M_0$ the following distance:

\noindent If $x,y\in M_i$, $d(x,y)=\|x-y\|_\infty$, where $\|\
\|_\infty$ is the $\ell_\infty^{n_i}$ norm.

\noindent If $x\in M_i$ et $y\in M_j$, with $i<j$, set
$d(x,y)=F(j)$, where $F$ is built so that $F$ is increasing and
$$\forall j\ \ \forall i\le j\ \ F(j)\ge \frac12\, diam(M_i).$$
Note that $\lim_i F(i)=+\infty$. We leave it to the reader to
check that $(M_0,d)$ is a metric space with bounded geometry. We
can now state the following.

\begin{Thm}\label{BaLa} Let $X$ be a Banach space. The following assertions
are equivalent.

\smallskip (i) $X$ has a trivial cotype.

\smallskip (ii) $X$ contains uniformly and linearly the
$\ell_\infty^n$'s.

\smallskip (iii) There exists $C\ge 1$ such that for every locally
finite metric space $M$, $M \buildrel {C}\over {\hookrightarrow}
X$.

\smallskip (iv) There exists $C\ge 1$ such that for every metric space with bounded geometry
$M$, $M \buildrel {C}\over {\hookrightarrow} X$.

\smallskip (v) There exists $C\ge 1$ such that  $M_0 \buildrel {C}\over {\hookrightarrow} X$.

\smallskip (vi) There exists $C\ge 1$ such that for every
finite metric space $M$, $M \buildrel {C}\over {\hookrightarrow}
X$.
\end{Thm}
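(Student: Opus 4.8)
The plan is to establish the cycle $(\mathrm{i})\Leftrightarrow(\mathrm{ii})\Rightarrow(\mathrm{vi})\Rightarrow(\mathrm{iii})\Rightarrow(\mathrm{iv})\Rightarrow(\mathrm{v})\Rightarrow(\mathrm{ii})$. The equivalence $(\mathrm{i})\Leftrightarrow(\mathrm{ii})$ is exactly the Maurey--Pisier theorem: a Banach space has trivial cotype precisely when it contains the $\ell_\infty^n$'s uniformly and linearly. Three of the remaining arrows carry no content and come only from inclusions between the classes of metric spaces involved: every bounded geometry space is locally finite, so $(\mathrm{iii})\Rightarrow(\mathrm{iv})$; the space $M_0$ has bounded geometry, so $(\mathrm{iv})\Rightarrow(\mathrm{v})$; and every finite metric space is locally finite, which also gives $(\mathrm{iii})\Rightarrow(\mathrm{vi})$ should one prefer that route. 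Thus the genuine work lies in the three arrows $(\mathrm{ii})\Rightarrow(\mathrm{vi})$, $(\mathrm{vi})\Rightarrow(\mathrm{iii})$ and $(\mathrm{v})\Rightarrow(\mathrm{ii})$.

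For $(\mathrm{ii})\Rightarrow(\mathrm{vi})$ I would invoke the Fr\'echet--Kuratowski embedding: any finite metric space with $m$ points embeds isometrically into $\ell_\infty^m$ via $x\mapsto(d(x,x_j))_{j=1}^m$. Since $(\mathrm{ii})$ provides, with a single constant $C$, a $C$-isomorphic copy of each $\ell_\infty^m$ inside $X$, composing the two maps embeds every finite metric space into $X$ with distortion at most $C$, uniformly. For $(\mathrm{vi})\Rightarrow(\mathrm{iii})$ the key external input is Baudier's finite-determinacy principle for locally finite metric spaces: if some constant controls the distortion of every finite subset of a locally finite space $M$ into $X$, then $M$ itself embeds into $X$ with distortion depending only on that constant. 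Under $(\mathrm{vi})$ every finite metric space, in particular every finite subset of any locally finite $M$, embeds with distortion $\le C$; hence $M$ embeds with a uniformly bounded distortion, which is $(\mathrm{iii})$.

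The crux is $(\mathrm{v})\Rightarrow(\mathrm{ii})$, where one must recover a \emph{linear} copy of $\ell_\infty^n$ from the purely metric embedding of $M_0$. Embedding $M_0$ with distortion $C$ restricts to a distortion-$C$ embedding of each piece $M_i=M_{n_i,k_i}$, and since every pair $(n,k)$ occurs, every net $M_{n,k}=knB_{\ell_\infty^n}\cap n\Zdb^n$ embeds into $X$ with distortion $\le C$. Fixing $n$, these nets increase with $k$ to the full scaled lattice $n\Zdb^n$, which is itself a net of $\ell_\infty^n$. Normalizing each embedding so that $0$ is fixed and using weak-$*$ compactness in $X^{**}$ (equivalently, passing to an ultrapower $X_{\mathcal U}$), I would patch them along an ultrafilter in $k$ into a single distortion-$C$ embedding of $n\Zdb^n$ into $X^{**}$; since a net is coarse Lipschitz equivalent to its ambient space, this is a coarse Lipschitz embedding of $\ell_\infty^n$ into $X^{**}$. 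Ribe's theorem (Theorem \ref{Ribelocal}) then produces a $K$-isomorphic copy of $\ell_\infty^n$ inside $X^{**}$, with $K$ depending only on $C$, and the principle of local reflexivity pushes this finite-dimensional copy back into $X$ with constant $K+\eps$. As this holds for every $n$ with one uniform constant, $X$ contains the $\ell_\infty^n$'s uniformly, namely $(\mathrm{ii})$.

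The main obstacle I anticipate is precisely this last step: passing from bi-Lipschitz embeddings of the bounded discrete nets $M_{n,k}$ to an honest linear containment of $\ell_\infty^n$. The delicate points are the compactness/ultraproduct patching of the finite nets into an embedding of the full lattice (ensuring that both the lower and the upper Lipschitz bounds survive the limit) and the correct appeal to Ribe's theorem followed by local reflexivity to descend from $X^{**}$ to $X$. By contrast, the finite case $(\mathrm{vi})$ and the locally finite case $(\mathrm{iii})$ are handled cleanly once Baudier's finite-determinacy theorem is granted, so the geometric heart of the argument is the linearization in $(\mathrm{v})\Rightarrow(\mathrm{ii})$.
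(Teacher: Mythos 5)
Your proposal is correct in substance, but it traverses the equivalences along a genuinely different cycle from the paper's. The paper proves (ii)$\Rightarrow$(iii) directly via the Baudier--Lancien construction \cite{BaLa}, gets (v)$\Rightarrow$(vi) by the elementary observation that, after rescaling, every finite metric space is $(1+\eps)$-isometric to a subset of some $M_{n,k}$ (Fr\'echet embedding into $B_{\ell_\infty^n}$ plus the $\frac1k$-net structure), and closes the loop with (vi)$\Rightarrow$(ii) via Schechtman's argument \cite{Sc}: ultraproduct patching of maps defined on the nets $\frac{1}{k}\Zdb^n\cap B_{\ell_\infty^n}$ into a Lipschitz embedding of the whole ball $B_{\ell_\infty^n}$ into $X_{\cal U}$, followed by Heinrich--Mankiewicz weak$^*$-G\^ateaux differentiation \cite{HM} and local reflexivity. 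You instead do (ii)$\Rightarrow$(vi) by Fr\'echet, (vi)$\Rightarrow$(iii) by finite determinacy, and (v)$\Rightarrow$(ii) by patching the pieces $M_{n,k}$ along an ultrafilter in $k$ into a bi-Lipschitz embedding of the lattice, extending to a coarse Lipschitz embedding of $\ell_\infty^n$, and invoking Theorem \ref{Ribelocal}. Your linearization step is thus the paper's Schechtman step with Ribe's theorem substituted for the differentiation argument; note this is a repackaging rather than a simplification, since quantitative proofs of Ribe's theorem run through essentially the same ultraproduct-plus-differentiation machinery, and you do need (it is true, but not stated in Theorem \ref{Ribelocal}) that the constant $K$ depends only on the coarse Lipschitz constants of the embedding, hence only on $C$, uniformly in $n$.

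Two caveats. First, the finite-determinacy principle you invoke for (vi)$\Rightarrow$(iii) is not Baudier's; it is a theorem of Ostrovskii (embeddability of locally finite metric spaces into Banach spaces is finitely determined), a genuinely harder external input than \cite{BaLa}, and the paper's route through (ii)$\Rightarrow$(iii) deliberately avoids it; your argument is fine provided you cite it correctly and use that the output distortion depends only on $C$. Second, your parenthetical ``weak-$*$ compactness in $X^{**}$ (equivalently, passing to an ultrapower $X_{\cal U}$)'' is not an equivalence: pointwise weak$^*$ limits in $X^{**}$ satisfy only $\|w^*\text{-}\lim u_k\|\le\liminf\|u_k\|$, so the upper Lipschitz bound survives but the lower bound can collapse entirely --- for instance $f_k(0)=0$, $f_k(1)=e_k$ in $\ell_2$ defines isometric embeddings of a two-point space whose weak limit is constant. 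Only the ultrapower, where the norm of an ultralimit equals the limit of the norms, preserves both bounds. Since you in fact run the argument in the ultrapower and yourself flag this as the delicate point, the proof stands, but the claimed equivalence should be deleted.
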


\begin{proof} The equivalence between (i) and (ii) is part of classical
results by Maurey and Pisier \cite{MP}.

(ii)$\Rightarrow$ (iii) is due to Baudier and the second author
\cite{BaLa}.

(iii)$\Rightarrow$ (iv) and (iv)$\Rightarrow$ (v) are trivial.

For any $k,n\in \Ndb$, the space $M_0$ contains the space
$M_{n,k}=knB_{\ell_\infty^n} \cap n\Zdb^n$ which is isometric to
the $\frac1k$-net of $B_{\ell_\infty^n}$: $B_{\ell_\infty^n}\cap
\frac1k \Zdb^n$. But, after rescaling, any finite metric space is
isometric to a subset of $B_{\ell_\infty^n}$, for some $n\in \Ndb$.
Thus, for any finite
metric space $M$ and any $\eps>0$, there exist $k,n\in \Ndb$ so
that $M$ is $(1+\eps)$-equivalent to a subset of $M_{k,n}$. The implication
(v)$\Rightarrow$ (vi) is now clear.

The proof of (vi)$\Rightarrow$ (ii) relies on an argument due to Schechtman \cite{Sc}. So assume that (vi) is satisfied and let us
fix $n\in \Ndb$. Then for any $k\in \Ndb$, there exists a map
$f_k:(\frac{1}{k}\Zdb^n \cap B_{\ell_\infty^n},\|\ \|_\infty) \to X$ such that $f_k(0)=0$
and
$$\forall x,y \in \frac{1}{k}\Zdb^n \cap B_{\ell_\infty^n}\ \ \ \|x-y\|_\infty \le
\|f_k(x)-f_k(y)\|\le K\|x-y\|_\infty.$$ Then we can define a map
$\lambda_k:B_{\ell_\infty^n} \to \frac{1}{k}\Zdb^n \cap B_{\ell_\infty^n}$ such that for all
$x\in B_{\ell_\infty^n}$:
$\|\lambda_k(x)-x\|_\infty=d(x,\frac{1}{k}\Zdb^n \cap B_{\ell_\infty^n})$. We now set
$\varphi_k=f_k\circ \lambda_k$.

\noindent Let $\cal U$ be a non trivial ultrafilter. We define
$\varphi: B_{\ell_\infty^n} \to X_{\cal U}\subseteq X_{\cal U}^{**}$
by $\varphi(x)=(\varphi_k(x))_{\cal U}$. It is easy to check that
$\varphi$ is a Lipschitz embedding. Then it follows from results
by Heinrich and Mankiewicz on weak$^*$-G\^{a}teaux
differentiabilty of Lipschitz maps \cite{HM} that $\ell_\infty^n$
is $K$-isomorphic to a linear subspace of $X_{\cal U}^{**}$.
Finally, using the local reflexivity principle and properties of
the ultra-product, we get that $\ell_\infty^n$ is
$(K+1)$-isomorphic to a linear subspace of $X$.

\end{proof}

\subsection{Embeddings of special graphs and asymptotic structure of Banach
spaces}

We will start this section by considering the countably branching
hyperbolic trees. For a positive integer $N$, $T_N=\bigcup_{i=0}^N
\Ndb^i$, where $\Ndb^0:=\{\emptyset\}$. Then
$T_\infty=\bigcup_{N=1}^\infty T_N$ is the set of all finite
sequences of positive integers. The hyperbolic distance $\rho$ is
defined on $T_\infty$ as follows. Let $s$ and $s'$ be two elements
of $T_\infty$ and let $u\in T_\infty$ be their greatest common
ancestor. We set
$$\rho(s,s')=|s|+|s'|-2|u|=\rho(s,u)+\rho(s',u).$$
The following result, that appeared in \cite{BKL}, is an asymptotic
analogue of Bourgain's characterization of super-reflexivity given in Theorem \ref{SRBo} above.

\begin{Thm}\label{main} Let $X$ be a reflexive Banach space. The following assertions are
equivalent.

(i) There exists $C\ge 1$ such that $T_\infty \buildrel {C}\over
{\hookrightarrow} X$.

(ii) There exists $C\ge 1$ such that for any $N$ in $\Ndb$, $T_N
\buildrel {C}\over {\hookrightarrow} X$.

(iii) $X$ does not admit any equivalent asymptotically uniformly
smooth norm \underline{or} $X$ does not admit any equivalent
asymptotically uniformly convex norm.
\end{Thm}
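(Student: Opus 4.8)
The plan is to establish the cycle of implications (i)$\Rightarrow$(ii)$\Rightarrow$(iii)$\Rightarrow$(i). The first implication is immediate, since every $T_N$ sits isometrically inside $T_\infty$, so an embedding $T_\infty\buildrel{C}\over{\hookrightarrow}X$ restricts to $T_N\buildrel{C}\over{\hookrightarrow}X$ for all $N$. Because $T_\infty$ is countable, at each stage I may replace $X$ by the separable closed subspace spanned by the images under consideration, and I use reflexivity to extract weakly convergent subsequences from the bounded families $(f(s\frown n))_{n\in\Ndb}$ of images of the children of a node. This weak-compactness, together with the duality $\overline\rho_X\leftrightarrow\overline\delta_{X^*}$ between the two moduli, is the only role reflexivity plays.

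For (ii)$\Rightarrow$(iii) I argue contrapositively: assuming $X$ admits equivalent AUS \emph{and} AUC norms, I bound the height $N$ of any $T_N$ embedding with a fixed distortion $C$, which forces the distortion to blow up with $N$. Normalise the embedding so that $\tfrac1C\rho(s,s')\le\|f(s)-f(s')\|\le\rho(s,s')$. The combinatorial core is a stabilisation step: exploiting that each node has infinitely many children and extracting weak limits branch by branch (an iterated-weak-limit and Ramsey selection of the kind sketched after Theorem \ref{KaRa2}), I pass to a \emph{full} subtree, still isomorphic to some $T_M$ with $M$ a fixed proportion of $N$, on which the fork differences about the weak limit $y_s$ of the children of $s$ behave like normalised weakly null vectors. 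On this subtree the two moduli are played against one another: each branching forces, via the AUC modulus $\overline\delta$, a definite increase of the norm when two sibling sub-branches separate, so that a descent of $M$ levels accumulates a lower bound growing with $M$; simultaneously the AUS modulus $\overline\rho$, through $\overline\rho(t)/t\to0$, caps the growth of the associated weakly null sums by a quantity sublinear in $M$. Comparing the two yields $M\le\Phi(C,\overline\rho,\overline\delta)<\infty$. I expect this to be the main obstacle: both the simultaneous bookkeeping of $\overline\rho$ and $\overline\delta$ along the tree and the extraction of a genuinely full subtree, rather than a single branch, are delicate, and it is exactly the infinite branching that makes $T_\infty$, as opposed to a weakly null sequence, the correct object.

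For (iii)$\Rightarrow$(i) I construct the embedding from whichever renorming fails. Suppose first that $X$ is not AUS-renormable, i.e. $\Sz(X)>\omega$. The quantitative Szlenk-index analysis then supplies, for every finite height and with a single constant $a>0$, a normalised weakly null tree $(v_w)_{w\in T_\infty}$ whose branches obey a lower-$\ell_1$ estimate $\|\sum_i\theta_i v_{w_i}\|\ge a\sum_i|\theta_i|$ for incomparable selections (such trees cannot survive under AUS, since $\overline\rho(t)/t\to0$ forces sums of normalised weakly null vectors to grow sublinearly). Defining $f(s)=\sum_{i=1}^{|s|}v_{s|_i}$, a pair $s,s'$ with greatest common ancestor at level $j$ gives $f(s)-f(s')=\sum_{i=j+1}^{|s|}v_{s|_i}-\sum_{i=j+1}^{|s'|}v_{s'|_i}$, a signed sum of $\rho(s,s')$ distinct tree-vectors; the triangle inequality bounds its norm above by $\rho(s,s')$ and the lower-$\ell_1$ estimate bounds it below by $a\,\rho(s,s')$, so $f$ is an embedding of distortion at most $a^{-1}$.

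If instead $X$ is not AUC-renormable, reflexive duality makes $X^*$ fail to be AUS-renormable, and dualising the weakly null trees (via biorthogonality, using $X=X^{**}$) produces in $X$, for every height and with one constant, a normalised weakly null tree $(v_w)$ whose branches satisfy a two-sided asymptotic-$c_0$ estimate $c\max_i|\theta_i|\le\|\sum_i\theta_i v_{w_i}\|\le b\max_i|\theta_i|$ for incomparable selections; the summing map now collapses the hyperbolic metric, so a different map is needed. Here I use the weighted Aharoni-type map $f(s)=\sum_{i=0}^{|s|}(|s|-i)\,v_{s|_i}$: for $s,s'$ of equal length with common ancestor at level $j$ the shared terms cancel, leaving $\sum_{i=j+1}^{|s|}(|s|-i)(v_{s|_i}-v_{s'|_i})$, whose norm is comparable, through the two-sided $c_0$ estimate, to the largest weight $|s|-j-1\approx\tfrac12\rho(s,s')$ (unequal lengths are handled analogously), so $f$ is again bi-Lipschitz with distortion controlled by $b/c$. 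In both cases the genuine technical crux is this very first step, namely extracting from the transfinite failure of the renorming weakly null trees of all heights with uniform constants and with the estimates valid across forks and not merely along single branches; this is where the asymptotic-structure machinery of Kalton and of Odell–Schlumprecht is indispensable, and it is the natural companion to the telescoping estimate of the (ii)$\Rightarrow$(iii) step.
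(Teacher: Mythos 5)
The survey states Theorem \ref{main} without proof, referring to \cite{BKL}, so your attempt can only be measured against that paper's argument; your skeleton (separable reduction via weak compactness, Szlenk-derived weakly null trees, a summing-type map, reflexive duality for the AUC case) is the right one, but the two central steps, as you state them, would fail. First, in (iii)$\Rightarrow$(i) the tree estimates you posit are false. A lower estimate $\|\sum_i\theta_i v_{w_i}\|\ge a\sum_i|\theta_i|$ with arbitrary signs along branches would make each infinite branch equivalent to the unit vector basis of $\ell_1$ (the upper bound is the triangle inequality), and your two-sided $c_0$-estimate would embed $c_0$ into $X$; both are impossible in a reflexive space. Moreover you invoke these estimates ``for incomparable selections'', whereas the vectors occurring in $f(s)-f(s')=\sum_{i>j}v_{s|_i}-\sum_{i>j}v_{s'|_i}$ form two \emph{chains}, so the hypothesis, even granted, would not apply. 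What $\Sz(X)>\omega$ actually yields is the weaker branch-wise $\ell_1^+$ structure: one functional $b^*$ per branch with $b^*(v_{s|_i})\ge\theta$, hence lower bounds for \emph{nonnegative} coefficients only. The lower Lipschitz bound for the summing map must then be assembled from the two branch functionals (each controls only its own half of $\rho(s,s')$, whence a loss of a factor $2$) together with an almost-biorthogonality extraction making each functional small on the other branch and on later nodes; dually, $\Sz(X^*)>\omega$ provides $w^*$-null dual derivation trees with \emph{bounded branch partial sums} ($c_0^+$-type, not two-sided $c_0$) and near-biorthogonal witness vectors in $X=X^{**}$. This extraction-plus-pairing, which you name as ``the genuine technical crux'' but do not perform, is precisely the proof. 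Finally, (i) concerns $T_\infty$: equi-embedded $T_N$'s only give (ii), and you need height-$\omega$ trees (available because the nested $w^*$-compact sets $s^n_\eps(B_{X^*})$ have nonempty intersection) or a gluing argument; this is unaddressed.

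Second, the mechanism you describe for (ii)$\Rightarrow$(iii) cannot close. Along a branch the increments $f(s|_i)-f(s|_{i-1})$ are \emph{not} weakly null --- weak nullity holds across siblings, not down the tree --- so neither modulus applies directly to branch sums, and your ``AUS caps the growth sublinearly'' is unjustified. Indeed it must be: if a sublinear AUS cap on $\|f(s)-f(s')\|$ held, then AUS alone, played against the linear lower bound from the bi-Lipschitz hypothesis, would forbid the embedding of tall trees into \emph{any} reflexive AUS space; but the theorem itself (via its dual case) embeds the $T_N$'s with uniform distortion into reflexive AUS spaces that merely fail AUC-renormability, e.g.\ Tsirelson's original space, which is reflexive and asymptotically $c_0$ while its dual $T$ has $\Sz(T)>\omega$. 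The quantitative bookkeeping is also incoherent: on weakly null aggregates an AUC modulus $\overline\delta_X(t)\ge ct^q$ produces growth of order at most $k^{1/q}$, itself sublinear, and since $\overline\delta_X\le\overline\rho_X$ forces $p\le q$ for any power types, comparing $k^{1/q}$ with a $k^{1/p}$ cap yields no contradiction. The actual argument of \cite{BKL} is a node-by-node fork iteration in which the weak limit $y_s$ of the children's images is controlled simultaneously by $\overline\rho$ and $\overline\delta$ at every level, and it degenerates as soon as either modulus is absent --- a feature any correct proof must have, and which your parallel use of the two moduli does not reproduce.
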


We will only mention one application of this result.

\begin{Cor}\label{beta} The class of all reflexive Banach spaces that admit both an equivalent
AUS norm and an equivalent AUC norm is stable under coarse
Lipschitz embeddings.
\end{Cor}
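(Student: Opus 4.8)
The plan is to deduce the corollary from Theorem \ref{main} together with Theorem \ref{reflexive}, the only extra ingredient being that a \emph{uniform} family of Lipschitz embeddings of the trees $T_N$ survives composition with a coarse Lipschitz embedding once one rescales. So suppose $X \buildrel {CL}\over {\hookrightarrow} Y$ via a map $g$, where $Y$ is reflexive and admits both an equivalent AUS norm and an equivalent AUC norm. First I would observe that $X$ is reflexive: since $Y$ is reflexive with an equivalent AUS norm, Theorem \ref{reflexive} applies verbatim. This is what lets us invoke Theorem \ref{main} for $X$ afterwards.

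With $X$ reflexive, Theorem \ref{main} tells us that $X$ admits both an AUS and an AUC norm precisely when condition (ii) there \emph{fails}, i.e. precisely when there is no constant $C\ge 1$ for which $T_N \buildrel {C}\over {\hookrightarrow} X$ for all $N$; indeed, the assertion ``$X$ has both an AUS and an AUC norm'' is the negation of (iii), hence the negation of (ii). So the whole corollary reduces to the following claim: if the trees $T_N$ embed into $X$ with a uniform distortion, then they embed into $Y$ with a uniform distortion. Granting this, the proof finishes by contradiction: were the $T_N$ to embed uniformly into $X$, they would embed uniformly into $Y$, and then Theorem \ref{main} applied to the reflexive space $Y$ would force (iii) for $Y$, contradicting that $Y$ has both an AUS and an AUC norm.

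The heart of the matter is thus the transfer claim, and here is where I would spend the effort. Let $A,B,\theta>0$ be the constants of $g$, so that $\|u-u'\|\ge\theta$ implies $A\|u-u'\|\le \|g(u)-g(u')\|\le B\|u-u'\|$. Suppose $f_N:T_N\to X$ realizes $T_N \buildrel {C}\over {\hookrightarrow} X$; after normalizing we may assume $\rho(s,s')\le \|f_N(s)-f_N(s')\|\le C\rho(s,s')$ for all $s,s'$. The subtlety is that each $T_N$ has \emph{bounded} diameter $2N$, so $g$ does not a priori control it; but the minimal nonzero value of $\rho$ is $1$, uniformly in $N$. Hence, fixing one scale $\lambda\ge\theta$ and replacing $f_N$ by $\lambda f_N$, every pair of distinct vertices is sent to points at distance $\ge\lambda\rho(s,s')\ge\lambda\ge\theta$, where the coarse Lipschitz estimates are in force. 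Composing gives
$$A\lambda\,\rho(s,s')\le \|g(\lambda f_N(s))-g(\lambda f_N(s'))\|\le BC\lambda\,\rho(s,s'),$$
so $g\circ(\lambda f_N)$ embeds $T_N$ into $Y$ with distortion at most $BC/A$, a bound independent of $N$. This establishes the transfer claim and completes the argument.

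I expect the main obstacle to be precisely this bounded-diameter issue: a coarse Lipschitz embedding only sees distances above $\theta$, so one must check that a single rescaling simultaneously pushes all the (infinitely many, but uniformly $\ge 1$) edge-distances of every $T_N$ above the threshold. The point is that $\theta$, $A$, $B$ and the distortion $C$ are all fixed in advance, so one value of $\lambda$ works for the whole family and keeps the output distortion $BC/A$ uniform in $N$, which is exactly what condition (ii) of Theorem \ref{main} requires.
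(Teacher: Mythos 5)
Your proof is correct and takes essentially the same route as the paper: Theorem \ref{reflexive} gives reflexivity of $X$, and the conclusion is then deduced from Theorem \ref{main} by transferring uniform-distortion embeddings of the trees $T_N$ through the coarse Lipschitz embedding. The rescaling step you spell out (distinct points of $T_N$ lie at distance at least $1$, so a single dilation $\lambda\ge\theta$ places all pairs above the coarse threshold, yielding distortion $BC/A$ uniformly in $N$) is precisely the detail the paper leaves implicit behind the phrase ``easily derived from Theorem \ref{main}''.
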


\begin{proof} Assume that $X$ coarse Lipschitz embeds in a space $Y$ which is
reflexive, AUS renormable and AUC renormable. First, it follows
from Theorem \ref{reflexive} that $X$ is reflexive. Now the
conclusion is easily derived from Theorem \ref{main}.
\end{proof}

Note that this class coincide with the class of reflexive spaces $X$
such that the Szlenk indices of $X$ and $X^*$ are both equal to the first infinite ordinal $\omega$ (see \cite{GKL2001}).

\medskip\noindent {\bf Problem 5.} We do not know if the class of all Banach spaces that are both AUS renormable and AUC renormable is stable under coarse Lipschitz
embeddings, net equivalences or uniform homeomorphisms.

\medskip\noindent {\bf Problem 6.}  We now present a variant of Problem 2. As we already indicated, we do not know if the class
of reflexive and AUS renormable Banach spaces is stable under
coarse Lipschitz embeddings. The important results by Kalton and
Randrianarivony on the stability of the asymptotic uniform
smoothness are based on the use of particular metric graphs,
namely the graphs $G_k(\Ndb)$ equipped with the distance:
$$\forall \n,\m \in G_k(\Ndb),\ \ d(\n,\m)=|\{j,\ n_j\neq m_j\}|.$$
It seems interesting to try to characterize the Banach
spaces $X$ such that there exists a constant $C\ge 1$ for which
$G_k(\Ndb) \buildrel {C}\over {\hookrightarrow} X$, for all $k\in
\Ndb$. In particular, one may ask whether a reflexive Banach
space which is not AUS renormable always contains the
$G_k(\Ndb)$'s with uniform distortion (the converse implication is
a consequence of Kalton and Randrianarivony's work). A positive
answer would solve Problem 2.

\subsection{Interlaced Kalton's graphs}

Very little is known about the coarse embeddings of metric spaces
into Banach spaces and about coarse embeddings between Banach
spaces (see Definition 3.1. for coarse embeddings). For quite some time it was not even known if a reflexive
Banach space could be universal for separable metric spaces and
coarse embeddings. This was solved negatively by Kalton in
\cite{K0} who showed the following.

\begin{Thm}\label{coarse} {\bf(Kalton 2007)} Let $X$ be a separable Banach space. Assume that $c_0$
coarsely embeds into $X$. Then one of the iterated duals of $X$
has to be non separable. In particular, $X$ cannot be reflexive.
\end{Thm}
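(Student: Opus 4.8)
The plan is to prove the contrapositive statement: if $X$ is separable and \emph{all} of its iterated duals $X^*,X^{**},X^{(3)},\dots$ are separable, then $c_0$ does not coarsely embed into $X$. This gives Theorem \ref{coarse}, and the last assertion is then immediate, since a separable reflexive space has every iterated dual equal to $X$ or to $X^*$ (both separable), so it can never carry a coarse copy of $c_0$.

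The combinatorial device is the family of interlaced Kalton graphs. For $k\in\Ndb$ and an infinite subset $\Mdb\subseteq\Ndb$, equip $G_k(\Mdb)$ with the interlacing metric: declare $\n=(n_1<\dots<n_k)$ and $\m=(m_1<\dots<m_k)$ adjacent when they interlace (that is, $m_1\le n_1\le m_2\le n_2\le\cdots$, or the same with the roles reversed), and let $d$ be the resulting shortest path distance. Two features are used. First, $c_0$ realizes these graphs coarsely and \emph{uniformly in $k$}: there are maps $\psi_k\colon(G_k(\Ndb),d)\to c_0$ and gauges $\rho_1,\rho_2$ independent of $k$, with $\rho_1(t)\to\infty$, such that $\rho_1(d(\n,\m))\le\|\psi_k\n-\psi_k\m\|_\infty\le\rho_2(d(\n,\m))$; this reflects the asymptotic smoothness of $c_0$, namely that one may spread an arbitrarily long block of new coordinates while keeping the supremum norm bounded. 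Second, $\diam(G_k(\Mdb),d)\to\infty$ as $k\to\infty$ for every infinite $\Mdb$, so each such graph contains pairs that are $d$-far.

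Now suppose towards a contradiction that $g\colon c_0\to X$ is a coarse embedding, with gauges $\sigma_1,\sigma_2$, $\sigma_1(t)\to\infty$. Then $f_k:=g\circ\psi_k\colon G_k(\Ndb)\to X$ are uniformly coarse-Lipschitz and separate $d$-far pairs by at least $\sigma_1\!\circ\!\rho_1$ of their distance, an amount that tends to infinity with $k$. The heart of the proof is to contradict this by a \emph{uniform} diameter bound. \textbf{Main Lemma.} There is a constant $D$, independent of $k$, such that every uniformly coarse-Lipschitz map $f\colon G_k(\Ndb)\to X$ admits an infinite $\Mdb'\subseteq\Ndb$ with $\diam f(G_k(\Mdb'))\le D$. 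Granting this, choose $k$ so large that $\sigma_1(\rho_1(\diam(G_k(\Mdb'),d)))>D$; since $G_k(\Mdb')$ still contains $d$-far pairs and their $f_k$-images are within $D$ of each other, we reach the desired contradiction and conclude that no coarse embedding exists.

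The Main Lemma is exactly where the hypothesis on iterated duals enters, and its proof is the main obstacle. It proceeds by induction on the $k$ levels of the interlacing graph, interleaving a Ramsey stabilization with iterated weak-$*$ limits. One peels off the first coordinate by taking a weak-$*$ cluster value of $f(\n)$ as $n_1\to\infty$ inside $X^{**}$; separability of $X^*$ makes the weak-$*$ topology metrizable on bounded subsets of $X^{**}$, so one may extract a genuinely convergent subsequence and, by a Ramsey/diagonal argument, stabilize the limit on an infinite subset. Iterating this extraction across the remaining coordinates pushes the limit vectors into the successive duals $X^{(4)},X^{(6)},\dots$, and separability of each iterated dual is precisely what keeps every weak-$*$ passage metrizable and lets the diagonalization terminate after $k$ steps. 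The delicate point is to carry this out with estimates that do not deteriorate with $k$: one must arrange the interlacing so that interlaced neighbours collapse to a common weak-$*$ limit while the total displacement accumulated over all levels stays bounded by a single constant $D$. This $k$-independent control is the technical core of Kalton's construction in \cite{K0}; its mechanism breaks down exactly for spaces with a non-separable iterated dual, which is consistent with $c_0$ itself, whose bidual $\ell_\infty$ is non-separable.
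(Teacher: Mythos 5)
You have correctly identified the right family of objects (the interlaced graphs $G_k(\Ndb)$, their uniform coarse realization inside $c_0$, and a $k$-independent stabilization phenomenon in the target), but your Main Lemma is false in the generality your contrapositive requires, and this is a genuine gap rather than a technicality. What you call the Main Lemma is precisely Kalton's $\cal Q$-property for $X$: concentration of every single map $f:G_k(\Ndb)\to X$ with $\omega_f(1)\le\eta$ onto an infinite subgraph with a diameter bound $D(\eta)$ independent of $k$. This does hold when $X$ is reflexive, so your scheme can in principle prove the final assertion that $X$ cannot be reflexive. But it cannot prove the full statement: the James space $J$ is quasi-reflexive, so \emph{all} of its iterated duals are separable, yet $J$ fails the $\cal Q$-property (this is item (2) of the results from \cite{K0} quoted in this very section of the paper). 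Hence no $k$-independent single-map concentration constant exists for $J$, and your Main Lemma cannot be established under the hypothesis ``all iterated duals separable''. The paper's proof uses a different mechanism that survives exactly this obstruction: instead of concentrating one map, it manufactures an \emph{uncountable family} of isometric copies of $G_k(\Ndb)$ inside $c_0$, namely $f_A(\n)=\sum_{i=1}^k s_A(n_i)$ for each infinite $A\subseteq\Ndb$, and exploits separability through a pigeonhole on the cluster values $\partial^k(h\circ f_A)$: Lemma \ref{extract3} yields $A\neq B$ and an infinite $\Mdb$ with $\|h(f_A(\n))-h(f_B(\n))\|\le 2\omega_h(1)+1$ for all $\n\in G_k(\Mdb)$, while $\|f_A(\n)-f_B(\n)\|_\infty=k$ for suitable $\n$. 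The $k$-independent bound there concerns the \emph{difference of two maps evaluated at the same point} (obtained from $\partial^k f_A\approx\partial^k f_B$ together with Lemma \ref{extract2}), not the diameter of the image of a single map; it is this strictly weaker conclusion that still goes through when, in the non-reflexive case, the $w^*$-limits defining $\partial$ take values in the successive (separable) iterated duals.

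Even in the reflexive case, where your Main Lemma is true, your proposal does not actually prove it: you defer what you yourself call ``the technical core'' to Kalton, which is circular for a proof of this theorem. Moreover the sketch misattributes the source of the uniformity. For reflexive $X$ the weak limits defining $\partial$ remain in $X$, so no climb into $X^{(4)},X^{(6)},\dots$ occurs; and metrizability of bounded $w^*$-topologies (your stated use of separability) yields convergent subsequences but no quantitative control whatsoever. The $k$-independent estimates come instead from the fact that the stabilization inequalities are expressed in terms of $\omega_f(1)$ alone, as in Lemma \ref{extract2} ($\|f(\n)\|\le\|\partial^k f\|+\omega_f(1)+\eps$ on $G_k(\Mdb)$), combined with the Ramsey extraction of Lemma \ref{extract}; separability of the target enters through the pigeonhole among uncountably many cluster points, not through metrizability. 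To repair your argument you would either have to restrict the claim to reflexive $X$ and prove the $\cal Q$-property there, or replace the single-map Main Lemma by the two-map closeness statement of Lemma \ref{extract3} applied to the family $\{h\circ f_A\}$, which is the route the paper takes.
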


The idea of the proof is to consider a new graph metric $\delta$
on $G_k(\Mdb)$, for $\Mdb$ infinite subset of $\Ndb$. We will say
that $\n\neq \m \in G_k(\Mdb)$ are adjacent (or $\delta(\n,\m)=1$)
if they interlace or more precisely if
$$m_1\le n_1\le..\le m_k\le n_k\ \ {\rm or}\ \ n_1\le m_1\le..\le n_k\le m_k.$$

\noindent For simplicity we will only show that $X$ cannot be
reflexive. So let us assume that $X$ is a reflexive Banach space
and fix a non principal ultrafilter $\cal U$ on $\Ndb$. For a
bounded function $f:G_k(\Ndb) \to X$ we define $\partial f:
G_{k-1}(\Ndb)\to X$ by
$$\forall \n\in G_{k-1}(\Ndb)\ \ \ \partial f(\n)= w-\lim_{n_k \in \cal
U}f(n_1,..,n_{k-1},n_k).$$

\noindent Note that for $1\le i \le k$, $\partial^i f$ is a
bounded map from $G_{k-i}(\Ndb)$ into $X$ and that $\partial^k f$
is an element of $X$. Let us first state without proof a series of basic lemmas
about the operation $\partial$.

\begin{Lem}\label{extract} Let $h:G_k(\Ndb) \to \Rdb$ be a bounded map and $\eps>0$. Then
there is an infinite subset $\Mdb$ of $\Ndb$ such that
$$\forall \n \in G_k(\Mdb)\ \ \ |h(\n)-\partial^k h|< \eps.$$
\end{Lem}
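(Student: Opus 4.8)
The plan is to realize the iterated ultrafilter limit $\partial^k h$ on a single infinite subset by a greedy ``diagonal'' extraction, the scalar nature of $h$ making all the weak limits ordinary $\cal U$-limits. First I would record the two elementary facts I will lean on: since $h$ is bounded and real-valued, for every bounded real sequence $(a_n)$ the limit $a=\lim_{n\in\cal U}a_n$ exists, and for each $\eta>0$ the set $\{n:|a_n-a|<\eta\}$ belongs to $\cal U$; moreover $\cal U$ is closed under finite intersections and every member of $\cal U$ is infinite. I would then introduce the partial limits: for $0\le i\le k$ and an increasing tuple $(s_1,\dots,s_i)$ set
$$L_i(s_1,\dots,s_i)=\lim_{n_{i+1}\in\cal U}\lim_{n_{i+2}\in\cal U}\cdots\lim_{n_k\in\cal U}h(s_1,\dots,s_i,n_{i+1},\dots,n_k),$$
so that $L_k=h$, $L_0=\partial^k h$, and crucially $L_i(s_1,\dots,s_i)=\lim_{n\in\cal U}L_{i+1}(s_1,\dots,s_i,n)$. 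This is the same data an induction on $k$ would peel off one coordinate at a time, but unfolding it avoids the need to nest the extracted sets.

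Next I would build $\Mdb=\{q_1<q_2<\cdots\}$ greedily. Having chosen $q_1<\cdots<q_m$, consider all pairs $(i,\vec s)$ with $0\le i\le k-1$ and $\vec s$ an increasing $i$-tuple drawn from $\{q_1,\dots,q_m\}$; there are only finitely many. For each such pair the set
$$A_{i,\vec s}=\Big\{n>q_m:\ \big|L_{i+1}(\vec s,n)-L_i(\vec s)\big|<\tfrac{\eps}{k}\Big\}$$
lies in $\cal U$, being the set on which the sequence $n\mapsto L_{i+1}(\vec s,n)$ is $\tfrac{\eps}{k}$-close to its $\cal U$-limit $L_i(\vec s)$, intersected with the cofinite (hence $\cal U$-) set $\{n>q_m\}$. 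Their finite intersection is again in $\cal U$ and therefore infinite, so I may pick $q_{m+1}$ inside it. By construction, whenever a later point $q_j$ is appended, it is $\tfrac{\eps}{k}$-good as an $(i+1)$-st coordinate on top of every increasing $i$-tuple formed from the points chosen before it.

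Finally I would conclude by telescoping. Fix $\n=(q_{j_1},\dots,q_{j_k})\in G_k(\Mdb)$ with $j_1<\cdots<j_k$. Writing $\vec s^{\,(i)}=(q_{j_1},\dots,q_{j_i})$, one has
$$h(\n)-\partial^k h=L_k(\vec s^{\,(k)})-L_0=\sum_{i=0}^{k-1}\Big(L_{i+1}(\vec s^{\,(i)},q_{j_{i+1}})-L_i(\vec s^{\,(i)})\Big).$$
For the $i$-th summand the tuple $\vec s^{\,(i)}$ is formed from $\{q_1,\dots,q_{j_{i+1}-1}\}$ and the appended coordinate is $q_{j_{i+1}}$, so the selection step that chose $q_{j_{i+1}}$ guarantees that this term has modulus less than $\tfrac{\eps}{k}$; summing the $k$ terms gives $|h(\n)-\partial^k h|<\eps$, as required. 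I expect the main obstacle to be exactly the simultaneity of the extraction: a naive induction that first fixes the last coordinate and then applies the inductive hypothesis fails, because the set produced by the inductive step need not belong to $\cal U$ and can entirely miss the $\cal U$-sets governing the last coordinate. The greedy construction above sidesteps this by choosing each new point to be simultaneously good for \emph{all} already-formed tuples at \emph{all} levels $i$, which is possible precisely because $\cal U$ is stable under finite intersections.
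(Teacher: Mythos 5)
Your proof is correct. The paper in fact states this lemma \emph{without} proof (it is one of the ``basic lemmas about the operation $\partial$'' listed before the proof of Theorem \ref{coarse}), and your greedy diagonal extraction --- realizing all the partial iterated limits $L_i=\partial^{k-i}h$ simultaneously by choosing each new point of $\Mdb$ inside a finite intersection of $\cal U$-sets, then telescoping $h(\n)-\partial^k h=\sum_{i=0}^{k-1}\bigl(L_{i+1}(\vec s^{\,(i)},q_{j_{i+1}})-L_i(\vec s^{\,(i)})\bigr)$ --- is exactly the standard argument one would supply, the one in Kalton's original paper \cite{K0}. All the ingredients you invoke are legitimately available here: $\cal U$ is non-principal, so it contains the cofinite sets $\{n>q_m\}$ and every member is infinite, and boundedness of $h$ (hence of each $L_i$) guarantees every $\cal U$-limit exists.
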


\begin{Lem} Let $f:G_k(\Ndb)\to X$ and $g:G_k(\Mdb) \to X^*$ be two bounded
maps. Define $f \otimes g:G_{2k}(\Ndb) \to \Rdb$ by $$(f\otimes
g)(n_1,..,n_{2k})=\langle
f(n_2,n_4,..,n_{2k}),g(n_1,..,n_{2k-1})\rangle.$$ Then
$\partial^2(f\otimes g)=\partial f \otimes \partial g.$
\end{Lem}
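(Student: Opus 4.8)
The plan is to unwind the two definitions and evaluate the two weak limits one coordinate at a time. Since $\partial^2=\partial\circ\partial$ and the two trailing coordinates $n_{2k-1},n_{2k}$ enter the two tensor factors separately, this is essentially a bookkeeping argument with no genuine interchange of limits. Fix $\n=(n_1,\dots,n_{2k-2})\in G_{2k-2}(\Ndb)$. Applying the definition of $\partial$ to the real-valued map $f\otimes g$ gives
$$\partial^2(f\otimes g)(\n)=\lim_{n_{2k-1}\in\mathcal U}\ \lim_{n_{2k}\in\mathcal U}\ \big\langle f(n_2,n_4,\dots,n_{2k}),\,g(n_1,n_3,\dots,n_{2k-1})\big\rangle,$$
the inner limit over $n_{2k}$ being taken first.

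First I would treat the inner limit. The coordinate $n_{2k}$ occurs only as the last argument of $f$, while $g(n_1,n_3,\dots,n_{2k-1})\in X^*$ is a fixed functional. Hence, directly from $\partial f(n_2,\dots,n_{2k-2})=w\text{-}\lim_{n_{2k}\in\mathcal U}f(n_2,\dots,n_{2k})$, the pairing passes to the limit:
$$\lim_{n_{2k}\in\mathcal U}\big\langle f(n_2,\dots,n_{2k}),\,g(n_1,\dots,n_{2k-1})\big\rangle=\big\langle \partial f(n_2,n_4,\dots,n_{2k-2}),\,g(n_1,n_3,\dots,n_{2k-1})\big\rangle.$$
This is nothing more than the definition of weak convergence along $\mathcal U$ tested against a fixed functional.

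Next I would treat the outer limit over $n_{2k-1}$. Now $\partial f(n_2,\dots,n_{2k-2})\in X$ is fixed and $n_{2k-1}$ occurs only as the last argument of $g$. Here reflexivity enters twice: it guarantees that the bounded map $g$ into $X^*$ has a well-defined weak limit $\partial g(n_1,\dots,n_{2k-3})=w\text{-}\lim_{n_{2k-1}\in\mathcal U}g(n_1,\dots,n_{2k-1})$ (bounded subsets of the reflexive space $X^*$ are relatively weakly compact), and it ensures weak convergence in $X^*$ agrees with weak$^*$ convergence, so testing against $\partial f(\dots)\in X=X^{**}$ yields
$$\lim_{n_{2k-1}\in\mathcal U}\big\langle \partial f(n_2,\dots,n_{2k-2}),\,g(n_1,\dots,n_{2k-1})\big\rangle=\big\langle \partial f(n_2,n_4,\dots,n_{2k-2}),\,\partial g(n_1,n_3,\dots,n_{2k-3})\big\rangle.$$

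Finally I would match indices. Since $\partial f:G_{k-1}(\Ndb)\to X$ and $\partial g:G_{k-1}(\Mdb)\to X^*$, the definition of the tensor product reads $(\partial f\otimes\partial g)(\n)=\langle \partial f(n_2,n_4,\dots,n_{2k-2}),\,\partial g(n_1,n_3,\dots,n_{2k-3})\rangle$, which is precisely the expression just obtained; hence $\partial^2(f\otimes g)=\partial f\otimes\partial g$. The one point I would state with care, and the only place with real content, is the second limit: it is exactly where reflexivity is used, both to make $\partial g$ meaningful as a weak (not merely weak$^*$) limit and to let the fixed vector $\partial f(\dots)$ act as a weak$^*$-continuous functional on $X^*$. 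The first limit, by contrast, is an immediate restatement of the definition of $\partial f$, and because the two limits act on disjoint factors no Fubini-type interchange is needed.
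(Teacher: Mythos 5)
Your proof is correct; the paper states this lemma without proof (it is one of a series of ``basic lemmas'' left to the reader), and your computation --- stripping the last coordinate $n_{2k}$ off $f$ via the definition of the weak limit, then the coordinate $n_{2k-1}$ off $g$, with no interchange of limits needed thanks to the interleaved indexing built into the definition of $f\otimes g$ --- is exactly the intended argument. One small refinement: reflexivity is needed only once in your second limit, namely to guarantee that the weak limit defining $\partial g$ exists in the reflexive space $X^*$; once $g(n_1,n_3,\dots,n_{2k-1})$ converges weakly, its pairing with the fixed vector $\partial f(n_2,\dots,n_{2k-2})\in X\subseteq X^{**}$ converges automatically, so no identification of the weak and weak$^*$ topologies is required --- which is precisely why the paper's subsequent remark can adapt the whole argument to non-reflexive $X$ by redefining $\partial$ with $w^*$-limits valued in $X^{**}$.
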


\begin{Lem}\label{extract2} Let $f:G_k(\Ndb)\to X$ be a bounded map and $\eps>0$. Then there is
an infinite subset $\Mdb$ of $\Ndb$ such that
$$\forall \n\in G_k(\Mdb)\ \ \|f(\n)\|\le \|\partial^k f\|+
\omega_f(1)+\eps,$$ where $\omega_f$ is the modulus of continuity of $f$.
\end{Lem}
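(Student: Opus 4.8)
The plan is to deduce the vector-valued estimate from the scalar extraction in Lemma \ref{extract}, using the tensor operation $f\otimes g$ of the preceding lemma to encode interlaced pairs and a Hahn--Banach selection of norming functionals to realize the norm $\|f(\overline p)\|$ as a single pairing. Set $z=\partial^k f\in X$, and for each $\m\in G_k(\Ndb)$ choose $g(\m)\in X^*$ with $\|g(\m)\|=1$ and $\langle g(\m),f(\m)\rangle=\|f(\m)\|$ (an arbitrary unit vector if $f(\m)=0$). The map $g$ need not be continuous, but it is bounded, and that is all the lemmas on $\partial$ require.

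First I would identify $\partial^{2k}(f\otimes g)$. Iterating the identity $\partial^2(f\otimes g)=\partial f\otimes\partial g$ exactly $k$ times yields
$$\partial^{2k}(f\otimes g)=\partial^k f\otimes\partial^k g=\langle\partial^k f,\partial^k g\rangle=\langle z,z^*\rangle,\qquad z^*:=\partial^k g\in X^*.$$
Because $X$ is reflexive the weak limits defining $\partial$ stay inside $X$ and $X^*$, and since the norm is weakly lower semicontinuous each application of $\partial$ does not increase the supremum norm; hence $\|z^*\|\le 1$ and $\partial^{2k}(f\otimes g)\le\|z\|$. Applying Lemma \ref{extract} to the bounded scalar map $h=f\otimes g$ on $G_{2k}(\Ndb)$ gives an infinite $\Mdb\subseteq\Ndb$ with $h(\vec n)<\|z\|+\eps$ for all $\vec n\in G_{2k}(\Mdb)$; unwinding the definition of $\otimes$ this reads
$$\langle f(n_2,n_4,\dots,n_{2k}),\,g(n_1,n_3,\dots,n_{2k-1})\rangle<\|z\|+\eps \quad\text{whenever } n_1<\dots<n_{2k}\ \text{in}\ \Mdb.$$

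The remaining step converts this off-diagonal pairing into $\|f(\overline p)\|=\langle g(\overline p),f(\overline p)\rangle$ by interleaving, and this is where I expect the only real care to be needed: a given $\overline p\in G_k(\Mdb)$ may not leave room in $\Mdb$ for interlacing indices, so I would thin $\Mdb$ out. Writing $\Mdb=\{a_1<a_2<\cdots\}$ and $\Mdb'=\{a_2,a_4,\dots\}$, for $\overline p=(a_{2i_1},\dots,a_{2i_k})\in G_k(\Mdb')$ set $q_j=a_{2i_j+1}\in\Mdb$; then $p_1<q_1<p_2<q_2<\dots<p_k<q_k$ all lie in $\Mdb$, so $\overline q=(q_1,\dots,q_k)$ interlaces $\overline p$ (thus $\delta(\overline p,\overline q)=1$) and the concatenation $(p_1,q_1,\dots,p_k,q_k)$, carrying $\overline p$ in its odd and $\overline q$ in its even slots, lies in $G_{2k}(\Mdb)$. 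The displayed inequality then gives $\langle g(\overline p),f(\overline q)\rangle<\|z\|+\eps$, whence
$$\|f(\overline p)\|=\langle g(\overline p),f(\overline q)\rangle+\langle g(\overline p),f(\overline p)-f(\overline q)\rangle<\|z\|+\eps+\|f(\overline p)-f(\overline q)\|\le\|\partial^k f\|+\omega_f(1)+\eps,$$
the last bound because $\delta(\overline p,\overline q)=1$ forces $\|f(\overline p)-f(\overline q)\|\le\omega_f(1)$. This proves the statement with $\Mdb'$ as the required infinite subset.

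The crux is the pairing bookkeeping: one must pair $f(\overline q)$ against $g(\overline p)$ rather than against $g(\overline q)$, so that the possibly wild oscillation of the functional selection $g$ never appears, and only the controlled oscillation of $f$, measured by $\omega_f(1)$, is spent in passing from $f(\overline q)$ back to $f(\overline p)$. The thinning of $\Mdb$ to $\Mdb'$ is precisely the device ensuring that interlacing companions of every $\overline p$ remain available inside the extracted set.
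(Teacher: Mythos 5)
Your proof is correct, and it follows exactly the route the paper intends: the lemma is stated there without proof (deferring to Kalton's original argument in \cite{K0}), and your assembly of a norming selection $g$, the iterated identity $\partial^{2k}(f\otimes g)=\langle \partial^k f,\partial^k g\rangle$, the scalar extraction of Lemma \ref{extract}, and the interleaving of $\overline p$ with companions from a thinned copy of $\Mdb$ is precisely that argument. In particular you correctly identified the two delicate points — pairing $f(\overline q)$ against $g(\overline p)$ so the discontinuity of $g$ never enters, and passing to $\Mdb'=\{a_2,a_4,\dots\}$ so every $\overline p$ admits an interlacing $\overline q$ inside the extracted set.
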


\begin{Lem}\label{extract3} Let $\eps>0$, $X$ be a separable reflexive Banach space and $I$ be an
uncountable set. Assume that for each $i\in I$, $f_i:G_k(\Ndb)\to
X$ is a bounded map. Then there exist $i\neq j \in I$ and an
infinite subset $\Mdb$ of $\Ndb$ such that
$$\forall \n\in G_k(\Mdb)\ \ \|f_i(\n)-f_j(\n)\|\le \omega_{f_i}(1)+\omega_{f_j}(1)+\eps.$$
\end{Lem}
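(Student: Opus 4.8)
The plan is to reduce everything to Lemma \ref{extract2} applied to the difference $f_i-f_j$, after first using separability to locate a pair of indices whose $\partial^k$-values are close. The three ingredients are: linearity of $\partial^k$, a pigeonhole argument based on separability, and the bound already recorded in Lemma \ref{extract2}.

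First I would record the two elementary facts that make the reduction work. The operation $\partial$ is defined by weak limits along the fixed ultrafilter $\cal U$, and such limits are linear; hence its $k$-fold iterate is linear too, and in particular $\partial^k(f_i-f_j)=\partial^k f_i-\partial^k f_j$ for all $i,j\in I$. (It is here, and only here, that reflexivity of $X$ enters: weak compactness of bounded sets guarantees that the weak limits defining $\partial^k f_i$ exist as elements of $X$.) Second, the triangle inequality gives the subadditivity $\omega_{f_i-f_j}(1)\le \omega_{f_i}(1)+\omega_{f_j}(1)$, and $f_i-f_j$ is again a bounded map $G_k(\Ndb)\to X$.

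The key step is a pigeonhole argument. Since $X$ is separable, the subset $\{\partial^k f_i:\ i\in I\}$ of $X$ is separable and can therefore be covered by countably many balls of radius $\eps/6$. As $I$ is uncountable, at least one of these balls must contain $\partial^k f_i$ for more than one index; I would fix two \emph{distinct} indices $i\neq j$ whose values $\partial^k f_i,\partial^k f_j$ lie in a common such ball, so that $\|\partial^k f_i-\partial^k f_j\|<\eps/3$.

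Finally I would apply Lemma \ref{extract2} to the bounded map $f=f_i-f_j$ with error $\eps/3$, obtaining an infinite subset $\Mdb$ of $\Ndb$ such that for all $\n\in G_k(\Mdb)$,
$$\|f_i(\n)-f_j(\n)\|\le \|\partial^k(f_i-f_j)\|+\omega_{f_i-f_j}(1)+\tfrac{\eps}{3}.$$
Substituting $\|\partial^k(f_i-f_j)\|=\|\partial^k f_i-\partial^k f_j\|<\eps/3$ and $\omega_{f_i-f_j}(1)\le\omega_{f_i}(1)+\omega_{f_j}(1)$ yields $\|f_i(\n)-f_j(\n)\|\le \omega_{f_i}(1)+\omega_{f_j}(1)+\tfrac{2\eps}{3}$, which is the claimed bound. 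The only point requiring genuine care is the pigeonhole: one must extract two distinct indices from an uncountable family against a countable cover, which is precisely where the hypotheses that $I$ is uncountable and $X$ is separable are used. Everything else is linearity of $\partial^k$ and the triangle inequality, so I expect no real obstacle beyond correctly bookkeeping the three copies of $\eps/3$.
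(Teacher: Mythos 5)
Your proof is correct, and it is exactly the intended argument: the paper states Lemma \ref{extract3} without proof (as one of the ``basic lemmas'' about $\partial$), and the natural route --- and the one in Kalton's original paper \cite{K0} --- is precisely yours: linearity of $\partial^k$ (with reflexivity ensuring the iterated weak ultrafilter limits exist), a separability pigeonhole producing $i\neq j$ with $\|\partial^k f_i-\partial^k f_j\|<\eps/3$, and an application of Lemma \ref{extract2} to $f_i-f_j$ together with the subadditivity $\omega_{f_i-f_j}(1)\le\omega_{f_i}(1)+\omega_{f_j}(1)$. The $\eps$-bookkeeping checks out, so there is nothing to add.
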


We are now ready for the proof of the theorem. As we will see, the
proof relies on the fact that $c_0$ contains uncountably many
isometric copies of the $G_k(\Ndb)$'s with too many points far
away from each other (which will be in contradiction with Lemma
\ref{extract3}).

\begin{proof}[Proof of Theorem~\ref{coarse}]
Assume $X$ is reflexive and let $h:c_0\to X$ be a map which is
bounded on bounded subsets of $c_0$. Let $(e_k)_{k=1}^\infty$ be
the canonical basis of $c_0$. For an infinite subset $A$ of $\Ndb$
we now define
$$\forall n\in \Ndb\ \ s_A(n)=\sum_{k\le n,\ k\in A} e_k$$
and
$$\forall \n=(n_1,..,n_k)\in G_k(\Ndb)\ \ f_A(\n)=\sum_{i=1}^k s_A(n_i).$$
Then the $h\circ f_A$'s form an uncountable family of bounded maps
from $G_k(\Ndb)$ to $X$. It therefore follows from Lemma
\ref{extract3} that there are two distinct infinite subsets $A$
and $B$ of $\Ndb$ and another infinite subset $\Mdb$ of $\Ndb$ so
that:
$$\forall \n\in G_k(\Mdb)\ \ \|h\circ f_A(\n)-h\circ f_B(\n)\|\le \omega_{h\circ
f_A}(1)+\omega_{h\circ f_B}(1)+1\le 2\omega_h(1)+1.$$ But, since $A\neq B$,
there is $\n\in G_k(\Mdb)$ with $\|f_A(\n)-f_B(\n)\|=k$. By taking
arbitrarily large values of $k$ we deduce that $h$ cannot be a
coarse embedding.
\end{proof}

\noindent {\bf Remarks.}

(1) Similarly, one can show that $h$ cannot be a uniform embedding, by composing $h$ with
the maps $tf_A$ and letting $t$ tend to zero.

\smallskip (2) It is now easy to adapt this proof in order to obtain the stronger
result stated in Theorem \ref{coarse}. Indeed, one just has to
change the definition of the operator $\partial$ as follows. If
$f:G_k(\Ndb)\to X$ is bounded, define $\partial f:G_{k-1}(\Ndb)\to
X^{**}$ by
$$\forall \n\in G_{k-1}(\Ndb)\ \ \ \partial f(\n)= w^*-\lim_{n_k \in \cal
U}f(n_1,..,n_{k-1},n_k).$$
We leave it to the reader to rewrite the argument.

\smallskip (3) On the other hand, Kalton proved in \cite{K-1} that $c_0$
embeds uniformly and coarsely in a Banach space $X$ with the Schur
property. Note that $X$ does not contain any subspace
linearly isomorphic to $c_0$.

\smallskip (4) We will see in the next section that Kalton recently used a similar
operation $\partial$ and the same graph
distance on $G_k(\omega_1)$, where $\omega_1$ is the first
uncountable ordinal (see \cite{K3}) in order to study uniform
embeddings into $\ell_\infty$.

\medskip\noindent {\bf Problem 7.} In view of this result, the
metric graphs $(G_k(\Ndb),\delta)$ are clearly of special
importance. It is a natural question to characterize the
Banach spaces containing the spaces $(G_k(\Ndb),\delta)$ with
uniformly bounded distortion.

\medskip In  \cite{K0} Kalton pushed the idea behind the
proof of Theorem \ref{coarse} much further and introduced the
following abstract notions in order to study the coarse or uniform
embeddings into reflexive Banach spaces.

\smallskip Let $(M,d)$ be a metric space, $\eps>0$ and $\eta\ge 0$.
We say that $M$ has {\it property} $\cal Q(\eps,\eta)$ if for
every $k \in \Ndb$ and every map $f:(G_k(\Ndb),\delta) \to (M,d)$
with $\omega_f(1)\le \eta$ there exists an infinite subset $\Mdb$
of $\Ndb$ such that:
$$d(f(\sigma),f(\tau))\le \eps\ \ \ \sigma<\tau,\ \ \sigma,\tau\in
G_k(\Mdb).$$ Then $\Delta_M(\eps)$ is the supremum of all $\eta
\ge 0$ so that $M$ has {\it property} $\cal Q(\eps,\eta)$ and
Kalton proves the following general statement.

\begin{Thm} Let $M$ be a metric space and $X$ be a reflexive
Banach space.

\noindent (i) If $M$ embeds uniformly into $X$, then
$\Delta_M(\eps)>0$, for all $\eps>0$.

\noindent (ii) If $M$ embeds coarsely into $X$, then $\lim_{\eps
\to +\infty} \Delta_M(\eps)=+\infty$.
\end{Thm}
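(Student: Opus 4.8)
The plan is to transport a test map $f\colon (G_k(\Ndb),\delta)\to M$ with $\omega_f(1)\le\eta$ into $X$ and to exploit the weak-limit machinery of the operator $\partial$. Fix an embedding $\Phi\colon M\to X$ (uniform in (i), coarse in (ii)) and set $g=\Phi\circ f\colon G_k(\Ndb)\to X$. Since $(G_k(\Ndb),\delta)$ has finite diameter $D_k$, the set $f(G_k(\Ndb))$ has diameter at most $\eta D_k$, and as $\Phi$ carries bounded sets to bounded sets (true for both uniform and coarse embeddings) the map $g$ is bounded. Replacing $X$ by the closed linear span of the countable set $g(G_k(\Ndb))$, we may assume $X$ is separable (still reflexive), so that $\partial$ and Lemma~\ref{extract2} are available. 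Interpreting $\sigma<\tau$ as strict interlacing $m_1<n_1<\cdots<m_k<n_k$ (with $\sigma=(m_i)$, $\tau=(n_i)$), the whole problem reduces to making $\|g(\sigma)-g(\tau)\|$ uniformly small over such pairs in some $G_k(\Mdb')$: in (i) I would then pull the estimate back through the uniform continuity of $\Phi^{-1}$, and in (ii) through $\rho_1(d(f(\sigma),f(\tau)))\le\|g(\sigma)-g(\tau)\|$.

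The key step is a merging device. To a strictly interlacing pair $\sigma<\tau$ I associate the single vertex $(m_1,n_1,m_2,n_2,\dots,m_k,n_k)\in G_{2k}(\Ndb)$, and I define $F\colon G_{2k}(\Ndb)\to X$ by $F(\mathbf p)=g(\pi_{\mathrm{odd}}\mathbf p)-g(\pi_{\mathrm{even}}\mathbf p)$, where $\pi_{\mathrm{odd}}\mathbf p=(p_1,p_3,\dots,p_{2k-1})$ and $\pi_{\mathrm{even}}\mathbf p=(p_2,p_4,\dots,p_{2k})$, so that $F$ on the merged vertex equals $g(\sigma)-g(\tau)$. First I would check, straight from the definition of $\partial$ as an iterated weak limit over the last coordinate, that $\partial^{2k}(g\circ\pi_{\mathrm{odd}})=\partial^{2k}(g\circ\pi_{\mathrm{even}})=\partial^k g$: each pair of successive applications of $\partial$ peels off one weak limit of $g$, the intervening ``unused'' coordinate having no effect. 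By linearity of the weak limit this gives $\partial^{2k}F=0$. Next I would verify $\omega_F(1)\le 2\omega_g(1)$ for the interlacing metric, using that if $\mathbf p,\mathbf q\in G_{2k}(\Ndb)$ interlace then so do $\pi_{\mathrm{odd}}\mathbf p,\pi_{\mathrm{odd}}\mathbf q$ and $\pi_{\mathrm{even}}\mathbf p,\pi_{\mathrm{even}}\mathbf q$ in $G_k(\Ndb)$. Applying Lemma~\ref{extract2} to $F$ (with $2k$ in place of $k$) then yields an infinite $\Mdb'\subseteq\Ndb$ with $\|F(\mathbf p)\|\le\|\partial^{2k}F\|+\omega_F(1)+\eps'=2\omega_g(1)+\eps'$ for every $\mathbf p\in G_{2k}(\Mdb')$; in particular $\|g(\sigma)-g(\tau)\|\le 2\omega_g(1)+\eps'$ for every strictly interlacing pair in $G_k(\Mdb')$, with $\eps'>0$ at our disposal.

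It remains to feed in the embedding estimates. In case (i), $\omega_g(1)\le\omega_\Phi(\eta)$, so the bound above gives $d(f(\sigma),f(\tau))\le\omega_{\Phi^{-1}}\!\big(2\omega_\Phi(\eta)+\eps'\big)$ on $G_k(\Mdb')$. Given the target $\eps>0$, I fix $s_0>0$ with $\omega_{\Phi^{-1}}(s_0)\le\eps$, take $\eps'=s_0/2$, and choose $\eta>0$ so small that $\omega_\Phi(\eta)\le s_0/4$; this shows $M$ has property $\mathcal Q(\eps,\eta)$, so $\Delta_M(\eps)\ge\eta>0$. In case (ii), $\omega_g(1)\le\rho_2(\eta)$, giving $\rho_1(d(f(\sigma),f(\tau)))\le 2\rho_2(\eta)+\eps'$, so with $\eps'=1$ and the generalized inverse $\rho_1^{-1}(s)=\sup\{t\ge 0:\rho_1(t)\le s\}$ (finite because $\rho_1(t)\to+\infty$) I obtain property $\mathcal Q\big(\rho_1^{-1}(2\rho_2(\eta)+1),\eta\big)$ for every $\eta$. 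Since $\Delta_M$ is nondecreasing in its argument, taking $\eta=N$ shows $\Delta_M(\eps)\ge N$ for all $\eps\ge\rho_1^{-1}(2\rho_2(N)+1)$, and letting $N\to+\infty$ gives $\lim_{\eps\to+\infty}\Delta_M(\eps)=+\infty$.

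I expect the main obstacle to be the identity $\partial^{2k}F=0$: this is exactly where the interlacing (rather than the symmetric-difference) structure of the graph is essential, since it is the interlacing of $\sigma$ and $\tau$ that lets their merge lie in $G_{2k}$ and forces the two iterated weak limits to coincide and cancel. The boundedness of $g$ (finiteness of $\mathrm{diam}_\delta G_k(\Ndb)$), the reduction to a separable subspace, and the handling of non-strict interlacing by a limiting argument are routine, and I would treat them briefly.
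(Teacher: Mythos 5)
Your proposal is correct and is, in substance, Kalton's own argument, which the paper states without proof but whose machinery it develops around Theorem \ref{coarse}: your merged map $F=g\circ\pi_{\mathrm{odd}}-g\circ\pi_{\mathrm{even}}$ with $\partial^{2k}F=0$ is precisely the additive analogue of the paper's unnumbered lemma $\partial^{2}(f\otimes g)=\partial f\otimes\partial g$, and your use of Lemma \ref{extract2} to get $\|g(\sigma)-g(\tau)\|\le 2\omega_g(1)+\eps'$ on interlaced pairs mirrors the extraction scheme in the proof of Theorem \ref{coarse}. One small repair: a uniformly continuous map on a general metric space need \emph{not} carry bounded sets to bounded sets (take a uniformly discrete domain), so the boundedness of $g$ should instead be justified by $\omega_g(1)\le\omega_\Phi(\eta)<\infty$ for $\eta$ small in case (i) (resp.\ $\omega_g(1)\le\rho_2(\eta)$ in case (ii), with $\rho_2$ taken nondecreasing, a harmless normalization) together with the fact that $(G_k(\Ndb),\delta)$ is a connected graph of diameter at most $2k$ --- an estimate your argument uses anyway, so nothing else changes.
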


Let us now turn to the case when our metric space is a Banach
space that we shall denote $E$. Then it easy to see that the
function $\Delta_E$ is linear. We denote $\cal Q_E$ the constant
such that for all $\eps >0$, $\Delta_E(\eps)=\cal Q_E \eps$.
Finally, we say that $E$ has the $\cal Q$-{\it property} if $\cal
Q_E>0$. It follows from Lemma \ref{extract2} that a reflexive
Banach space has the $\cal Q$-property. Thus we have:

\begin{Cor}\label{embedQ} If a Banach space $E$ fails the $\cal Q$-property,
then $E$ does not coarsely embed into a reflexive Banach space and
$B_E$ does not uniformly embed into a reflexive Banach space.
\end{Cor}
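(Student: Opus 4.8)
The hypothesis that $E$ fails the $\mathcal Q$-property means $\mathcal Q_E=0$, so by the linearity of $\Delta_E$ we have $\Delta_E(\eps)=\mathcal Q_E\,\eps=0$ for every $\eps>0$. Equivalently, for all $\eps,\eta>0$ the space $E$ fails property $\mathcal Q(\eps,\eta)$: there are $k\in\Ndb$ and a map $g\colon(G_k(\Ndb),\delta)\to E$ with $\omega_g(1)\le\eta$ such that $\diam g(G_k(\Mdb))>\eps$ for every infinite $\Mdb\subseteq\Ndb$. For the first assertion this is all that is needed: since $\lim_{\eps\to+\infty}\Delta_E(\eps)=0\neq+\infty$, the contrapositive of part (ii) of the preceding theorem (applied with $M=E$) shows that $E$ does not coarsely embed into any reflexive Banach space.

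For the second assertion I would argue by contraposition through part (i): it suffices to exhibit a single $\eps_0>0$ with $\Delta_{B_E}(\eps_0)=0$, for then $B_E$ cannot embed uniformly into a reflexive space. The plan is to transfer the bad maps furnished by $\Delta_E\equiv 0$ from $E$ into the unit ball $B_E$. Given a bad map $g$ on $G_k(\Ndb)$ with $\omega_g(1)\le\eta$, the control on the modulus together with the finiteness of the diameter of the graph $(G_k(\Ndb),\delta)$ forces the image $g(G_k(\Ndb))$ to have finite diameter. After translating one value to $0$ and rescaling by a suitable $\lambda>0$, the map $\lambda g$ takes values in $B_E$, has modulus at most $\lambda\eta$, and over every infinite $\Mdb\subseteq\Ndb$ its image still has diameter $>\lambda\eps$. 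Feeding in the bad maps for $E$ at the appropriate scales — available at every scale precisely because $\Delta_E$ is homogeneous — this should produce, for every prescribed modulus $\mu>0$, a map into $B_E$ witnessing the failure of $\mathcal Q(\eps_0,\mu)$ at a fixed scale $\eps_0>0$; hence $\Delta_{B_E}(\eps_0)=0$, and part (i) finishes the argument.

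The delicate point, and the step I expect to be the main obstacle, is the balancing of parameters in this transfer. The diameter $D_k$ of $(G_k(\Ndb),\delta)$ is finite for each $k$ but unbounded as $k\to\infty$, so a map of modulus $\eta$ on $G_k(\Ndb)$ may have image diameter as large as $D_k\eta$; fitting it inside the bounded ball $B_E$ forces a rescaling factor $\lambda$ of order $1/(D_k\eta)$, which simultaneously shrinks the guaranteed separation $\lambda\eps$. To keep this separation bounded below by a fixed $\eps_0$ while driving the modulus to $0$, one must choose the witnessing bad map so that its image diameter is comparable to the separation it maintains, and it is exactly here that the homogeneity of $\Delta_E$ — i.e.\ the availability of scale-$1$ bad maps on graphs of controlled dimension — has to be exploited quantitatively. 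Once this controlled transfer is carried out, $\Delta_{B_E}(\eps_0)=0$ follows and the proof is complete.
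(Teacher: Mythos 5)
Your first assertion is handled exactly as the paper intends, and your argument for it is correct: since $\Delta_E(\eps)=\mathcal{Q}_E\,\eps=0$ for every $\eps>0$, part (ii) of the preceding theorem applied to $M=E$ rules out any coarse embedding of $E$ into a reflexive space. (A small imprecision: failing $\mathcal{Q}(\eps,\eta)$ produces, on every infinite $\Mdb$, a pair $\sigma<\tau$ in $G_k(\Mdb)$ with $d(f(\sigma),f(\tau))>\eps$, which is the separation along interlaced-ordered pairs, not merely a lower bound on the diameter of the image; this matters when you later want to contradict the definition of $\mathcal{Q}(\eps_0,\mu)$ for $B_E$.)

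The uniform half, however, has a genuine gap, and it is precisely the step you flag as ``the main obstacle'' and then assert away. Homogeneity of $\Delta_E$ acts by pure dilation: from a witness $g:G_k(\Ndb)\to E$ of the failure of $\mathcal{Q}(\eps,\eta)$ it produces witnesses of the failure of $\mathcal{Q}(\lambda\eps,\lambda\eta)$ on the \emph{same} graph $G_k(\Ndb)$, with the \emph{same} ratio of separation to image diameter. It gives no control whatsoever on $k$, and in particular no ``scale-$1$ bad maps on graphs of controlled dimension'': the failure of the $\mathcal{Q}$-property only guarantees, for each $\eta$, some $k$ (possibly enormous compared with $\eps/\eta$) and a map whose image has diameter up to $k\eta$ while the guaranteed separation is only $\eps$, so after your forced rescaling by $\lambda\sim 1/(k\eta)$ both the modulus and the separation $\lambda\eps\sim\eps/(k\eta)$ collapse together and no fixed $\eps_0$ survives. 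One can make this precise: since any $f$ on $G_k(\Ndb)$ with $\omega_f(1)\le\eta$ takes values in a translate of $k\eta B_E$ (the graph has diameter $k$), the scaling identity $\Delta_{rB_E}(\eps)=r\,\Delta_{B_E}(\eps/r)$ together with the monotonicity $\Delta_N\ge\Delta_M$ for $N\subseteq M$ shows that $t\mapsto \Delta_{B_E}(t)/t$ is nondecreasing and that $E$ fails the $\mathcal{Q}$-property if and only if $\lim_{t\to 0^+}\Delta_{B_E}(t)/t=0$. This vanishing of the ratio at $0$ is perfectly compatible with $\Delta_{B_E}(\eps)>0$ for every $\eps>0$, which is all that part (i) of the theorem concludes. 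So the intermediate claim your plan requires, namely $\Delta_{B_E}(\eps_0)=0$ for some fixed $\eps_0$, is strictly stronger than anything translation-and-rescaling of witnesses can deliver, no matter how the parameters are tuned; the obstacle you identified is not a balancing problem but a dead end for this route.

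For comparison: the survey itself offers no proof beyond ``Thus we have'' --- the coarse half is indeed the one-line application of (ii), but the uniform half is not a formal consequence of the theorem as displayed and rests on the finer quantitative analysis in Kalton's paper \cite{K0}, where the reflexive-space concentration argument is brought to bear on the rescaled witnesses directly rather than invoked only through the pointwise positivity of $\Delta_{B_E}$. Your write-up reduces the uniform statement to exactly the unproven (and, by the above, unobtainable-by-scaling) claim that bad maps can be chosen with image diameter comparable to their separation, so it is incomplete at the only nontrivial point of the corollary.
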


The fact that $c_0$ fails the $\cal Q$-property follows from
Theorem \ref{coarse} but it is actually an ingredient of its
proof. Then Kalton continues his study of the links between
reflexivity and the $\cal Q$-property. Let us mention without
proof a few of the many interesting results obtained in \cite{K0}.

\smallskip (1) A non reflexive Banach space with the alternating Banach-Saks
property (in particular with a non trivial type) fails the $\cal
Q$-property.

\smallskip (2) The James space $J$ and its dual fail the $\cal
Q$-property.

\smallskip (3) However, there exists a quasi-reflexive but non
reflexive Banach space with the $\cal Q$-property.

\medskip\noindent {\bf Problem 8.} Is there a converse to
Corollary \ref{embedQ}? More precisely: if $E$ is a separable
Banach space with the $\cal Q$-property, does $B_E$ uniformly
embed into a reflexive Banach space or does $E$ coarsely embed
into a reflexive Banach space? The answer is unknown for the space
constructed in the above statement (3).

\section{Nonseparable spaces}

We collect here a few recent results obtained by Nigel Kalton on
nonseparable Banach spaces together with some related open
problems. All the results presented in this section are taken from
Kalton's paper \cite{K3}. They mainly concern
embeddings of nonseparable Banach spaces into $\ell_\infty$. We
start with a positive result.

\begin{Thm}\label{unc} If $X$ has an unconditional basis and is of density character at most
$c$ (the cardinal of the continuum), then it is Lipschitz embeddable into $\ell_\infty$.
\end{Thm}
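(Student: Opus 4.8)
The plan is first to translate the statement into a concrete construction problem. Writing a candidate map as $F=(f_j)_{j\in\Ndb}$ with each $f_j:X\to\Rdb$, the map $F$ is a Lipschitz embedding into $\ell_\infty=\ell_\infty(\Ndb)$ exactly when there are constants $0<a\le b$ with
$$a\|x-y\|\le\sup_{j\in\Ndb}|f_j(x)-f_j(y)|\le b\|x-y\|,\qquad x,y\in X.$$
If every $f_j$ is $1$-Lipschitz the right inequality is automatic, and normalising by $f_j(0)=0$ gives $|f_j(x)|\le\|x\|$, so that $F$ automatically takes values in $\ell_\infty$. Hence the task reduces to producing a \emph{countable} family of $1$-Lipschitz functions, vanishing at $0$, which is lower-norming in the sense of the left inequality.

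Two preliminary reductions organise the construction. Since Lipschitz embeddability depends only on the linear isomorphism class, I would first renorm so that the unconditional basis $(e_\gamma)_{\gamma\in\Gamma}$ is $1$-unconditional; this makes $X$ a Banach lattice in which every band projection $P_E$ ($E\subseteq\Gamma$) has norm one, hence is $1$-Lipschitz, and in which $\|x-y\|=\|\,|x-y|\,\|$ depends only on the coordinate magnitudes $|e_\gamma^*(x)-e_\gamma^*(y)|$. Secondly, the hypothesis $|\Gamma|=\mathrm{dens}(X)\le\mathfrak c=|\{0,1\}^{\Ndb}|$ is exactly what allows a countable target: fix an injection $\gamma\mapsto c(\gamma)\in\{0,1\}^{\Ndb}$, regard each coordinate as a branch of the dyadic tree, and for a finite node $s\in\{0,1\}^{<\Ndb}$ set $\Gamma_s=\{\gamma:\ c(\gamma)\text{ extends }s\}$. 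There are only countably many nodes, the coded bands $P_s:=P_{\Gamma_s}$ are $1$-Lipschitz, and $\bigcap_n\Gamma_{c(\gamma)|_n}=\{\gamma\}$ for each $\gamma$, so the coded bands resolve individual coordinates in the limit. The separating functions are then built from the coded band quantities $x\mapsto\|P_s x\|$ and from distances $x\mapsto\mathrm{dist}(x,C_s)$ to suitable coded convex sets $C_s$, the latter being used to record not merely the size but the \emph{direction} of $x-y$; to capture the signs one also mixes in a fixed independent family of $\pm1$-patterns on $\Gamma$ (which exists precisely because $|\Gamma|\le\mathfrak c$), realised as norm-one sign operators on the lattice.

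The delicate point — and the place where I expect the real work to be — is the lower, bi-Lipschitz estimate. It cannot be obtained from linear functionals: a space such as $\ell_p(\Gamma)$ with $1<p<\infty$ and $|\Gamma|$ uncountable has \emph{no} countable norming subset of $X^*$ (each functional in $\ell_q(\Gamma)$ has only finitely many coordinates exceeding a given level, so countably many of them can norm only countably many of the $\mathfrak c$ coordinate directions $e_\gamma$), and therefore does not even linearly embed into $\ell_\infty$. Thus the $f_j$ must be genuinely nonlinear, and the feature that rescues a countable family is that, for a nonlinear function, the index $j$ witnessing a given pair may depend on the \emph{location} $(x,y)$ and not only on the direction $x-y$; at each fixed location only countably many ``gradients'' need be norming. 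The heart of the argument is to choose the coding $c$, the sets $C_s$ and the independent sign family so that this location-dependent detection works \emph{simultaneously} for all pairs: one must guarantee that for every $x\ne y$ some sufficiently deep coded coordinate isolates a part of $x-y$ carrying a fixed proportion of $\|x-y\|$, without interference from the uncountably many coordinates sharing the same code prefix. This isolation problem is the main obstacle, and it is resolved by Kalton through a careful combinatorial choice together with a limiting/extraction procedure of the same flavour as the weak$^*$-limit and Ramsey arguments used elsewhere in this section.
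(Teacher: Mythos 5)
Your preliminary reductions are sound: the restatement as the search for a countable family of $1$-Lipschitz functions vanishing at $0$ that is uniformly lower-norming, the renorming to a $1$-unconditional basis, and the observation that the family must be genuinely nonlinear (no countable norming family of functionals exists for, say, $\ell_p(\Gamma)$ with $\Gamma$ uncountable, since each dual element has countable support) are all correct and consistent with the paper. But the proposal stops exactly where the theorem lives. You name the uniform lower estimate as ``the heart of the argument'' and then write that it ``is resolved by Kalton through a careful combinatorial choice together with a limiting/extraction procedure'' --- that is, the decisive construction is deferred rather than given, so this is not a proof. Worse, the specific ingredients you propose cannot carry the sign information you need in the form stated: in a $1$-unconditional renorming, a sign-change operator $S_\eps$ is an isometry commuting with every band projection $P_s$, so $\|P_s S_\eps x\|=\|P_s x\|$ for all $x$; consequently the coded band quantities $x\mapsto\|P_s x\|$, even after mixing in your ``independent family of $\pm1$-patterns,'' depend only on the modulus $|x|$ and fail to separate $e_\gamma$ from $-e_\gamma$ (at distance $2$). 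The entire burden of sign-detection thus falls on the distances $\mathrm{dist}(\cdot,C_s)$ to ``suitable coded convex sets,'' which are never defined, and no argument is sketched for why some member of the countable family achieves a fixed proportion of $\|x-y\|$ simultaneously for all pairs. That is precisely the isolation problem you flag, and it remains open in your write-up.

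For comparison, the paper's mechanism is different from your dyadic coding and does solve this in one stroke. Since the density character is at most $c$, the basis is reindexed by $\Rdb$ itself, so the index set carries a linear order with the countable dense subset $\Qdb$; no injection into $\{0,1\}^{\Ndb}$ and no band projections appear. The countable family is indexed by rational data $(a,b,c)\in\Qdb^n\times\Qdb^n\times\Qdb^n$, and one sets $f_{(a,b,c)}(x)=\sup\{\sum_{j=1}^n (a_jx(t_j))_{+}\}$, the supremum running over real tuples $t_1<t_2<\cdots<t_n$ with $b_j<t_j<c_j$ and $\|\sum_{j=1}^n a_je_{t_j}^*\|_{X^*}\le 1$. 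Three features do the work that your sketch leaves open: the dual-norm constraint together with $1$-unconditionality makes each $f_{(a,b,c)}$ automatically $1$-Lipschitz; the positive part $(\,\cdot\,)_{+}$ --- not sign operators --- records directional information in a way compatible with taking suprema; and the supremum over genuinely \emph{real} tuples subject only to rational box and order constraints is exactly how countably many functions detect uncountably many coordinate directions, realizing concretely your (correct) intuition that the witnessing index may depend on the location of the pair. To turn your proposal into a proof you would have to specify the sets $C_s$ and establish the uniform lower bound; as written, the central step is missing.
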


\begin{proof}[Sketch of the main ideas in the proof] Assume that the basis is $1-$
unconditional and that it is indexed by the set $\Rdb$ of real
numbers. Denote by $(e_t^*)_{t\in \Rdb}$ the biorthogonal
functionals of the basis. If $x\in X$, we write $x(t)=e^*_t(x)$.
Suppose that $a, b, c\in \Qdb^n$. We write typically, $a=(a_1,
a_2,\dots, a_n)$ and denote by $-a=(-a_1, -a_2,\dots,-a_n)$.

\noindent Define then a subset $U(a, b, c)\subset \Rdb^n$ by
$(t_1, t_2, \dots, t_n)\in U(a, b, c)$ if $b_j<t_j<c_j$ for $j=1,
2, \dots, n$, $t_1< t_2< \dots, t_n$ and

$$
\|\sum_{j=1}^n a_je_{t_j}^*\|_{X^*}\le 1.
$$

\noindent For $t\in \Rdb$ write $t_{+}=\max(t, 0)$ and define
$f_{(a, b, c)}: X \to \Rdb$ by $f_{(a, b, c)}$ is identically $0$
if $U(a, b, c)$ is empty and otherwise

$$
f_{(a, b, c)}(x)=\sup\Big{\{}\sum_{j=1}^n (a_jx(t_j))_{+},\ (t_1, t_2, \dots, t_n)\in U(a, b, c)\Big{\}}.
$$

\noindent Finally define the map

$$
F(x)=\big(f_{(a, b, c)}(x)\big)_{(a, b, c)\in \bigcup_n Q^n}
$$

\noindent It can then be shown that $F$ is a Lipschitz embedding of $X$ into
$\ell_\infty$.
\end{proof}

\noindent {\bf Problem 9.} Let $X$ be reflexive of density $\le c$.
Is $X$ Lipschitz embeddable in $\ell_\infty$?

\bigskip We now proceed with other spaces of density $\le c$. Let $I$ be a set of
cardinality $c$. It is easy to show, using almost disjoint families, that the
space $c_{0}(I)$ is isometric to a subspace of $\ell_\infty/{c_0}$, and it
follows that there is no linear continuous injective map from $\ell_\infty/{c_0}$
into $\ell_\infty$. But by Theorem \ref{unc} above, $c_{0}(I)$ Lipschitz embeds into $\ell_\infty$.
This was shown much earlier \cite{AL} using the space $JL_{\infty}$, and it can also be seen
by applying Theorem VI. 8. 9 in \cite{DGZ} to any separable compact space $K$
with weight $c$ and with some finite derivative empty. Hence the linear
argument does not extend to the non-linear case. However, Kalton showed:

\begin{Thm}\label{kalton} $C[0, \omega_1]$ or $\ell_\infty/{c_0}$ cannot be
uniformly embedded into $\ell_\infty$.
\end{Thm}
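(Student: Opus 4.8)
$C[0,\omega_1]$ or $\ell_\infty/c_0$ cannot be uniformly embedded into $\ell_\infty$.

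The plan is to transport the interlacing-graph method of Theorem~\ref{coarse} to the uncountable index set $\omega_1$, and to exploit the mismatch between the uncountable cofinality of $\omega_1$ and the merely countable coordinate structure of $\ell_\infty$. Suppose, for a contradiction, that $f\colon Z\to\ell_\infty$ is a uniform embedding, where $Z$ denotes $C[0,\omega_1]$ or $\ell_\infty/c_0$. The first step is to realise the graphs $(G_k(\omega_1),\delta)$ inside $Z$ in a \emph{cumulative} fashion. For $C[0,\omega_1]$ I would use the continuous functions $\chi_{[0,\alpha]}$ ($\alpha<\omega_1$) and set
$$\Phi(\overline\alpha)=\sum_{i=1}^k\chi_{[0,\alpha_i]},\qquad \overline\alpha\in G_k(\omega_1);$$
a direct computation shows that $\delta$-adjacent (interlaced) tuples satisfy $\|\Phi(\overline\alpha)-\Phi(\overline\beta)\|\le 1$, while any two tuples with $\alpha_k<\beta_1$ satisfy $\|\Phi(\overline\alpha)-\Phi(\overline\beta)\|\ge k$. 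For $\ell_\infty/c_0$ one builds a parallel cumulative copy from an increasing tower of almost disjoint sets. Replacing $\Phi$ by $t\Phi$ and composing with $f$, I obtain $g=f\circ(t\Phi)\colon G_k(\omega_1)\to\ell_\infty$, bounded, with interlacing modulus $\omega_g(1)\le\omega_f(t)$ controlled by the uniform continuity of $f$, hence small and independent of $k$ once $t$ is fixed.

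Next I would set up the scalar $\partial$-calculus coordinatewise. Writing $g=(g^{(m)})_{m\in\Ndb}$, define $\partial$ by taking the limit of the last argument along a fixed uniform ultrafilter on $\omega_1$, so that $\partial^k g^{(m)}\in\Rdb$ and $\xi:=\partial^k g\in\ell_\infty$. The crux is the $\omega_1$-analogue of Lemma~\ref{extract2}: for each coordinate $m$ and each $\varepsilon>0$ there is a \emph{club} $C_m\subseteq\omega_1$ with $|g^{(m)}(\overline n)-\xi_m|\le\omega_g(1)+\varepsilon$ for all $\overline n\in G_k(C_m)$. Since the maps are scalar, no reflexivity of the target is needed; the extraction is carried out by a pressing-down (Fodor) argument in place of the Ramsey selection over $\Ndb$ used for Lemmas~\ref{extract}--\ref{extract2}. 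Because a countable intersection of clubs is again a club, the set $C=\bigcap_{m}C_m$ is still uncountable, and on $G_k(C)$ \emph{every} coordinate is simultaneously pinned to $\xi$ with the \emph{same} bound. As the tolerance $\omega_g(1)+\varepsilon$ is uniform in $m$, this upgrades at once to the $\ell_\infty$-estimate
$$\sup_{\overline n\in G_k(C)}\|g(\overline n)-\xi\|\le\omega_g(1)+\varepsilon,\qquad\text{hence}\qquad \diam g(G_k(C))\le 2\omega_g(1)+2\varepsilon.$$
This bound is independent of $k$, and it is here that the source must be $\omega_1$: it is the uncountable cofinality that lets countably many coordinatewise constraints be met on one uncountable set.

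To close the argument I compare this with the lower modulus of the embedding. Fix $\eta_0=1$ and let $\sigma_0>0$ be the corresponding modulus of uniform continuity of $f^{-1}$ on $f(Z)$. I choose $t$ so small that $\omega_f(t)<\sigma_0/4$ and $\varepsilon<\sigma_0/4$, and then $k\ge 1/t$. Selecting in the uncountable set $C$ two tuples $\overline\alpha,\overline\beta\in G_k(C)$ with $\alpha_k<\beta_1$, the cumulative estimate gives $\|t\Phi(\overline\alpha)-t\Phi(\overline\beta)\|\ge tk\ge 1=\eta_0$, whence $\|g(\overline\alpha)-g(\overline\beta)\|\ge\sigma_0$ by the choice of $\sigma_0$. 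But the stabilisation yields $\|g(\overline\alpha)-g(\overline\beta)\|\le\diam g(G_k(C))\le 2\omega_f(t)+2\varepsilon<\sigma_0$, a contradiction. Hence no uniform embedding $f$ exists, for $Z=C[0,\omega_1]$ and for $Z=\ell_\infty/c_0$ alike.

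I expect the main obstacle to be precisely the $\omega_1$-version of the extraction Lemma~\ref{extract2}: producing a genuine \emph{club} (not merely a cofinal set) on which each scalar coordinate is stabilised to its iterated ultrafilter limit. The interlacing distance $\delta$ must be handled with care, and the pressing-down argument has to be organised so that the error is governed by the single quantity $\omega_g(1)$ uniformly over the tuples, so that the coordinatewise bounds assemble into the $\ell_\infty$-bound above. Once that lemma is in place, the countable-intersection-of-clubs step and the rescaling argument are routine.
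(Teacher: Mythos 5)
Your global strategy is the right one and in fact coincides with Kalton's: you realize the interlacing graphs over $\omega_1$ inside $C[0,\omega_1]$ via cumulative sums of the functions $\chi_{[0,\alpha]}$ (the paper uses $\frac1n\sum_{j}x_{\alpha_j}$ with $x_\mu=\chi_{[0,\mu]}$, your $t\Phi$ with $k\ge 1/t$ is the same normalization), you exploit the moduli $\omega_f$ and the modulus of $f^{-1}$ in the same way, and your endgame (straddling tuples from an uncountable set to force $\|x_\nu-x_\mu\|=1$ below a quantity tending to $0$) is exactly the paper's contradiction. Your metric computations ($\le 1$ for interlaced tuples by disjointness of the telescoped intervals, $\ge k$ for separated tuples) are correct. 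But the heart of the proof --- the extraction lemma --- is where your proposal has a genuine gap, and you half-acknowledge it yourself. First, a pressing-down (Fodor) argument cannot deliver what you ask of it: Fodor's lemma produces \emph{stationary} subsets of $\omega_1$, never clubs, so the claimed coordinatewise lemma ``there is a club $C_m$ on which $g^{(m)}$ is pinned to $\xi_m$'' is unsupported by the tool you invoke. Second, if you retreat to stationary (or merely uncountable) sets per coordinate, the assembly step collapses: since $\omega_1$ splits into disjoint stationary sets, countably many stationary sets can have empty intersection, and your bound $\sup_{\overline n\in G_k(C)}\|g(\overline n)-\xi\|\le\omega_g(1)+\varepsilon$ with $C=\bigcap_m C_m$ is exactly the step that fails. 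The club version of the stabilization lemma is precisely what one cannot establish; the paper's Proposition (the $\ell_\infty$-valued stabilization) only yields an \emph{uncountable} set $\Theta$ with $\|f(\alpha)-\xi\|\le L/2$ on $\Theta^{[n]}$, not a club.

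The paper's repair is worth internalizing because it reverses your order of operations. Kalton introduces a notion of \emph{small} subsets of $\Omega_n=\omega_1^{[n]}$ via the derivative $\partial A$ (a tuple is in $\partial A$ if uncountably many extensions lie in $A$; $A$ is small if $\emptyset\notin\partial^nA$). Since a countable union of countable sets is countable, $\partial$ commutes with countable unions, so the small sets form a $\sigma$-ideal. One then shows the scalar lemma: a Lipschitz $h:\Omega_n\to\Rdb$ with constant $L$ admits $\xi$ such that $\{\alpha:|h(\alpha)-\xi|>L/2\}$ is small. Applying this to each of the countably many coordinates of $g$ and taking the union of the bad sets \emph{inside the ideal} (still small, hence the complement is very large), a single Ramsey-type lemma --- every very large set contains $\Theta^{[n]}$ for some uncountable $\Theta$ --- does the extraction once, at the end. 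Uncountability of $\Theta$ (hence cofinality in $\omega_1$) is all the endgame needs; clubs play no role anywhere. Two smaller remarks: the scalar lemma itself rests on the combinatorial fact that any two large sets contain an interlacing pair, not on pressing down; and for $\ell_\infty/c_0$ the paper does not build a separate almost-disjoint tower copy as you propose, but simply quotes Parovi\v{c}enko's theorem that $C[0,\omega_1]$ embeds isometrically into $\ell_\infty/c_0$, which is more economical. With your club lemma replaced by this small-set calculus, your argument becomes the paper's proof.
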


Before discussing the main ideas in the proof of this result, we
need some preparation. For $n\ge 0$, let $\Omega_n=\Omega_1^{[n]}$
be the collection of all $n-$subsets of $\Omega_1=[1, \omega_1)$.
For $n=0$, $\Omega_0=\{\emptyset\}$. We write a typical element of
$\Omega_n$ in the form $\alpha=\{\alpha_1, \dots, \alpha_n\}$,
where $\alpha_1<\alpha_2<\cdots< \alpha_n$. If $n\ge 1$, and
$A\subset\Omega_n$, we define $\partial A\subset \omega_1^{[n-1]}$
by $\{\alpha_1, \dots, \alpha_{n-1}\}\in \partial A$ if and only
if $\{\beta:\{\alpha_1, \dots, \alpha_{n-1}, \beta\}\in A\}$ is
uncountable. If $n=1$, this means that $\emptyset\in \partial A$
if and only if $A$ is uncountable.

We will say that $A\subset \Omega_n$ is {\it large} if
$\emptyset\in\partial^n A$. Otherwise $A$ is {\it small}. We will
say that $A\subset\Omega_n$ is {\it very large} if its complement
is small. Then one can show the following Ramsey type result.

\begin{Lem} If $A$ is a very large subset in $\Omega_n$, then there is an
uncountable set $\Theta\subset \Omega_1$ such that $\Theta^{[n]}\subset
A$.
\end{Lem}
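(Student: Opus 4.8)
The plan is to argue by induction on $n$, the case $n=1$ being immediate: here $A\subset\Omega_1$ is very large exactly when its complement is countable, so $\Theta=A$ is an uncountable set and (identifying ordinals with singletons) $\Theta^{[1]}\subset A$.

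For the inductive step I set $B=\Omega_n\setminus A$, which is small by hypothesis. The first observation I would record is the algebraic identity $\partial^n B=\partial^{n-1}(\partial B)$; since by definition ``$B$ is small'' means $\emptyset\notin\partial^n B$ and ``$\partial B$ is small'' means $\emptyset\notin\partial^{n-1}(\partial B)$, smallness of $B$ is \emph{equivalent} to smallness of $\partial B$ as a subset of $\Omega_{n-1}$. Consequently $\Omega_{n-1}\setminus\partial B$ is very large in $\Omega_{n-1}$, and the induction hypothesis produces an uncountable set $\Theta_1\subset\Omega_1$ with $\Theta_1^{[n-1]}\subset\Omega_{n-1}\setminus\partial B$; that is, every $(n-1)$-subset $s$ of $\Theta_1$ lies outside $\partial B$ and hence, by the very definition of $\partial B$, admits only countably many $\beta$ with $s\cup\{\beta\}\in B$.

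The second step is a transfinite recursion carried out inside $\Theta_1$, producing an increasing enumeration $\Theta=\{\theta_\xi:\xi<\omega_1\}$. At stage $\xi$, having chosen the countable set $P_\xi=\{\theta_\eta:\eta<\xi\}\subset\Theta_1$, I call $\theta\in\Theta_1$ \emph{good} if $s\cup\{\theta\}\in A$ for every $s\in P_\xi^{[n-1]}$, and I take $\theta_\xi$ to be a good element of $\Theta_1$ lying above $\sup P_\xi$. Such a choice is always available: $P_\xi$ has only countably many $(n-1)$-subsets $s$, each of which, lying in $\Theta_1^{[n-1]}$, has only countably many $B$-extensions by the previous paragraph; so the set of non-good $\theta$ is countable, while $\Theta_1$ has uncountably many elements above the countable ordinal $\sup P_\xi$. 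After $\omega_1$ steps $\Theta$ is uncountable, and any $n$-subset $F\subset\Theta$ can be written as $s\cup\{\theta_\xi\}$ with $\theta_\xi=\max F$ and $s\in P_\xi^{[n-1]}$; goodness of $\theta_\xi$ gives $F\in A$, whence $\Theta^{[n]}\subset A$.

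I expect the only delicate point to be the bookkeeping in this recursion, namely guaranteeing that the set of ``bad'' top-elements stays countable at every stage. This is precisely what forces the two-stage structure: one cannot run the recursion against $A$ directly, because an arbitrary $(n-1)$-subset might have uncountably many $B$-extensions. It is the induction hypothesis, applied to the very large set $\Omega_{n-1}\setminus\partial B$, that first confines the construction to a $\Theta_1$ all of whose $(n-1)$-subsets have countable $B$-shadow, after which the counting becomes routine.
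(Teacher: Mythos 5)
Your proof is correct: the observation that $\partial^n B=\partial^{n-1}(\partial B)$, so that smallness of $B$ in $\Omega_n$ is literally the same statement as smallness of $\partial B$ in $\Omega_{n-1}$, legitimately reduces to the inductive hypothesis, and the transfinite recursion inside $\Theta_1$ is sound --- at each stage $\xi<\omega_1$ only countably many $(n-1)$-subsets $s\subset P_\xi$ arise, each with countable $B$-shadow since $s\notin\partial B$, so a good $\theta_\xi$ above $\sup P_\xi$ always exists, and the increasing enumeration guarantees that any $F\in\Theta^{[n]}$ has the form $s\cup\{\theta_\xi\}$ with $s\in P_\xi^{[n-1]}$. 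The survey states this lemma without proof (deferring to Kalton's paper \cite{K3}), and your two-stage argument --- induction on $n$ to tame $\partial B$ first, then the $\omega_1$-recursion --- is essentially the standard argument used there, so there is nothing to flag.
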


We will now make $\Omega_n$ into a graph by declaring
$\alpha\not=\beta$ to be adjacent if they interlace, namely if

$$
\alpha_1\le\beta_1\le\cdots\le\alpha_n\le\beta_n\ \ {\rm or}\ \
\beta_1\le\alpha_1\le\cdots\le \beta_n\le\alpha_n,$$

\noindent and we define $d$ to be the least path metric on
$\Omega_n$, which then becomes a metric space.

We write $\alpha <\beta$ if $\alpha_1<\cdots
<\alpha_n<\beta_1<\cdots\beta_n$. If $\alpha<\beta$, then
$d(\alpha, \beta)=n$ so $\Omega_n$ has diameter $n$.

The next lemma relies on basic properties of the ordered set $\Omega_1$.

\begin{Lem} (i) If $A$ and $B$ are large sets in $\Omega_n$,
then there exist $\alpha\in A$ and $\beta\in B$ so that $\alpha$
and $\beta$ interlace.

\smallskip (ii) If $f$ is a Lipschitz map from $\Omega_n$ into
$\Rdb$, with Lipschitz constant $L$, then there is $\xi\in \Rdb$
so that $\{\alpha,\ |f(\alpha)-\xi|>L/2\}$ is small.

\end{Lem}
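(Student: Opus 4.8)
The plan is to prove the two parts separately, using throughout the one basic property of $\Omega_1=[1,\omega_1)$ that will do all the work: every uncountable subset of $\Omega_1$ meets the tail $(\gamma,\omega_1)$ in an uncountable set for each $\gamma<\omega_1$, since the initial segment $[1,\gamma]$ is countable.

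For (i), I would first unwind the hypothesis that $A$ is large. The condition $\emptyset\in\partial^n A$ says that there are uncountably many $x_1$ with $\{x_1\}\in\partial^{n-1}A$, that for each such $x_1$ there are uncountably many $x_2>x_1$ with $\{x_1,x_2\}\in\partial^{n-2}A$, and so on down to $\{x_1,\dots,x_n\}\in A$; at each stage only countably many candidates lie below a fixed ordinal, so the uncountably many admissible extensions may be taken strictly above the previous coordinate. The same tree description holds for $B$. I would then build interlacing $\alpha\in A$ and $\beta\in B$ greedily and alternately: pick $\alpha_1$ in the (uncountable) set of admissible first coordinates for $A$; pick $\beta_1$ admissible for $B$ with $\beta_1>\alpha_1$; pick $\alpha_2$ admissible for $A$ with $\alpha_2>\beta_1$; and so forth, at each step choosing from an uncountable admissible set an element above the finitely many ordinals already selected. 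This produces $\alpha_1<\beta_1<\cdots<\alpha_n<\beta_n$ with $\alpha=\{\alpha_1,\dots,\alpha_n\}\in A$ and $\beta=\{\beta_1,\dots,\beta_n\}\in B$, and these interlace.

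For (ii), I note first that $\Omega_n$ has diameter $n$, so the $L$-Lipschitz map $f$ is bounded. I set $a=\inf\{t:\{f>t\}\text{ small}\}$ and $b=\sup\{t:\{f<t\}\text{ small}\}$, both finite, and reduce the goal to finding $\xi$ for which $\{f>\xi+L/2\}$ and $\{f<\xi-L/2\}$ are both small, since then $\{|f-\xi|>L/2\}$ is their union. Part (i) furnishes the key obstruction: if $\{f>c_1\}$ and $\{f<c_2\}$ were both large with $c_1-c_2\ge L$, an interlacing pair $\alpha,\beta$ would give $f(\alpha)-f(\beta)>c_1-c_2\ge L$ while $d(\alpha,\beta)=1$, contradicting the Lipschitz bound. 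Applying this with $c_1=a-\eps$ and $c_2=b+\eps$ (both half-sets being large, as $a-\eps<a$ and $b+\eps>b$) and letting $\eps\downarrow 0$ yields $a-b\le L$, so the interval $[a-L/2,\,b+L/2]$ is nonempty.

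It then remains to pick $\xi$ in this interval and certify that the two half-sets are small; the delicate point, and the main obstacle, is the boundary value, since for $\xi=a-L/2$ the upper half-set is exactly $\{f>a\}$, which is small only once one knows that the ideal of small sets is countably additive. I would prove this auxiliary fact by induction on $n$: for a fixed $s$ one has $\{\beta:s\cup\{\beta\}\in\bigcup_k A_k\}=\bigcup_k\{\beta:s\cup\{\beta\}\in A_k\}$, a countable union, hence uncountable only if some single term is, so that $\partial(\bigcup_k A_k)=\bigcup_k\partial A_k$; the inductive hypothesis on $\Omega_{n-1}$ then shows a countable union of small sets is small, the cases $n=0,1$ being immediate. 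Consequently $\{f>a\}=\bigcup_k\{f>a+1/k\}$ and $\{f<b\}=\bigcup_k\{f<b-1/k\}$ are small, and for any $\xi\in[a-L/2,\,b+L/2]$ the two half-sets, being subsets of $\{f>a\}$ and $\{f<b\}$ respectively, are small, completing the argument.
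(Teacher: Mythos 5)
Your proof is correct. Note that the survey itself gives no proof of this lemma: it is stated with only the remark that it ``relies on basic properties of the ordered set $\Omega_1$,'' the details being in Kalton's paper \cite{K3}, so there is no argument in the text to compare against line by line. Your treatment is the natural one and is consistent with Kalton's development. In (i), the unwinding of $\emptyset\in\partial^nA$ into a tree of uncountable extension sets, together with the alternating greedy selection $\alpha_1<\beta_1<\cdots<\alpha_n<\beta_n$, works exactly as you say; the only fact used is that an uncountable subset of $[1,\omega_1)$ is cofinal above every countable ordinal, and you invoke it correctly at each step (including the point that the uncountably many admissible $\beta$'s in the definition of $\partial$ may be taken above all previously chosen coordinates). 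In (ii), you correctly identified the one genuinely delicate point: with $a=\inf\{t:\{f>t\}\ \text{small}\}$ and $b=\sup\{t:\{f<t\}\ \text{small}\}$, the interlacing argument only yields $a-b\le L$, and in the boundary case $a-b=L$ the forced choice $\xi=a-L/2=b+L/2$ requires knowing that $\{f>a\}$ and $\{f<b\}$ themselves are small, which does not follow from monotonicity alone. Your remedy --- proving that the small sets form a $\sigma$-ideal via the identity $\partial\bigl(\bigcup_k A_k\bigr)=\bigcup_k\partial A_k$ (a countable union of countable subsets of $\Omega_1$ being countable) and induction on $n$ --- is exactly the right auxiliary fact, and it is also how the ideal of small sets is handled in Kalton's original paper. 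Two implicit steps are fine but worth being aware of: smallness is inherited by subsets because $\partial$ is monotone, and in the Lipschitz contradiction the two large sets $\{f>c_1\}$ and $\{f<c_2\}$ are disjoint (as $c_1\ge c_2$), so the interlacing pair from (i) consists of distinct, hence adjacent, points with $d(\alpha,\beta)=1$.
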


It yields:

\begin{Prop}\label{kalton1}

If $f$ is a Lipschitz map from $(\Omega_n, d)\to \ell_\infty$ with
Lipschitz constant $L$, then there is $\xi\in\ell_\infty$ and an
uncountable subset $\Theta$ of $\Omega_1$ so that
$$
\|f(\alpha)-\xi\|\le L/2, \quad \quad \alpha\in\Theta^{[n]}.
$$
\end{Prop}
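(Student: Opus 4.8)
The plan is to prove Proposition \ref{kalton1} by combining the two parts of the preceding Lemma with the Ramsey-type result on very large sets. The key observation is that $\ell_\infty$ is a space of bounded functions, so a map $f:(\Omega_n,d)\to\ell_\infty$ is really a family of coordinate maps $f_m:\Omega_n\to\Rdb$, each of which inherits the Lipschitz constant $L$ of $f$ (since the coordinate projections are $1$-Lipschitz). My first step would be to apply part (ii) of the Lemma to each coordinate map $f_m$ separately: for each $m\in\Ndb$ there is a scalar $\xi_m\in\Rdb$ such that the set $S_m=\{\alpha:\ |f_m(\alpha)-\xi_m|>L/2\}$ is small. Setting $\xi=(\xi_m)_{m}$, I would first need to check that $\xi\in\ell_\infty$, which follows because $f$ is bounded (its image lies in a bounded set, as $\Omega_n$ has finite diameter $n$ and $f$ is Lipschitz) and each $\xi_m$ lies within $L/2$ of $f_m(\alpha)$ for the bulk of $\alpha$'s.

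The heart of the argument, and the step I expect to be the main obstacle, is passing from the countably many small exceptional sets $S_m$ to a single uncountable $\Theta\subset\Omega_1$ on which \emph{every} coordinate is controlled simultaneously. A countable union of small sets need not be small in general, so I cannot simply take the complement of $\bigcup_m S_m$ and hope it is very large. This is the genuinely delicate point, and I expect it is precisely where the structure of $\Omega_1$ (uncountable cofinality, the pressing-down/diagonal properties behind the definition of ``small'') must be used. The natural plan is to show that the property of being small is preserved under countable unions in this setting, or more likely to argue directly: one shows that $\bigcup_m S_m$ is small (equivalently, its complement $A=\bigcap_m(\Omega_n\setminus S_m)$ is very large), using that each $S_m$ is small together with a diagonalization that exploits the uncountability of $\Omega_1$ against the countability of the index set. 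I would look to the proof of the Ramsey Lemma for the right combinatorial mechanism, since that is where the interaction between countable data and the uncountable $\Omega_1$ is already handled.

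Once $A=\{\alpha:\ \|f(\alpha)-\xi\|_\infty\le L/2\}$ is shown to be very large, the conclusion is immediate: I would apply the Ramsey-type Lemma, which asserts that any very large subset of $\Omega_n$ contains $\Theta^{[n]}$ for some uncountable $\Theta\subset\Omega_1$. For every $\alpha\in\Theta^{[n]}\subset A$ we then have $\|f(\alpha)-\xi\|\le L/2$, which is exactly the desired statement. Thus the proof reduces to the single technical assertion that the coordinatewise exceptional sets can be amalgamated into one small set, and everything else is a routine assembly of the stated lemmas.

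I should note one subtlety to verify in writing up the first step: part (ii) of the Lemma as stated gives a small set, while the Ramsey Lemma consumes a \emph{very large} set (small complement). The bridge is therefore to prove that $\bigcup_m S_m$ is small so that its complement is very large; getting this union-of-small-sets fact right is the crux, and I would devote the bulk of the argument to it before invoking the Ramsey Lemma to extract $\Theta$.
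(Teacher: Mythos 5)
Your proposal is correct and is exactly the paper's (implicit) argument: apply part (ii) of the lemma to each coordinate $f_m$, amalgamate the countably many small exceptional sets into one small set, and feed the very large complement into the Ramsey-type lemma to extract $\Theta$. The one step you defer as the ``crux'' is in fact immediate: since a countable union of countable subsets of $\Omega_1$ is countable, $\partial\bigl(\bigcup_m A_m\bigr)=\bigcup_m \partial A_m$, hence by induction $\partial^n\bigl(\bigcup_m A_m\bigr)=\bigcup_m \partial^n A_m$, so the small sets form a $\sigma$-ideal and $\bigcup_m S_m$ is small as you need.
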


\begin{proof}[Sketch of the proof of
Theorem \ref{kalton}]

For the case of $C[0, \omega_1]$, let $(x_\mu)_{\mu\le\omega_1}$
be defined by $x_\mu=\chi_{[0, \mu]}$. Assume that $X=C[0,
\omega_1]$ uniformly homeomorphically embeds into $\ell_\infty$ and
let $f: B_X\to \ell_\infty$ be a uniformly homeomorphic embedding.

\noindent Under the notation above, for each $n$, consider the map
$f_n: \Omega_n\to\ell_\infty$ given by
$$
f_n(\alpha)= f(\frac1n\sum_{j=1}^n x_{\alpha_j})
$$

\noindent If $\alpha$ and $\beta$ interlace, then by a telescopic
argument
$$
\|\frac1n\sum_{j=1}^n(x_{\beta_j}-x_{\alpha_j})\|\le \frac2n
$$
\noindent and from the definition of the distance in $\Omega_n$ we
get that $f_n$ has Lipschitz constant $\psi_f(\frac2n)$, where
$\psi_f$ is the modulus of uniform continuity of $f$.

By Proposition \ref{kalton1}, we may pick an uncountable subset
$\Theta_n$ of $\Omega_1$ so that
$$
\|f_n(\alpha)-f_n(\beta)\|\le\psi_f(\frac2n), \quad\quad \alpha, \beta\in
\Theta_n.
$$

\noindent Hence
$$
\|\frac2n\sum_{j=1}^nx_{\beta_j}-\frac2n\sum_{j=1}^nx_{\alpha_j}\|\le
\psi_g(\psi_f(\frac2n)), \quad \quad \alpha, \beta \in \Theta_n
$$

\noindent Pick now $\alpha_1<\alpha_2<\cdots <\alpha_n\in
\Theta_n$. If $\nu>\mu>\alpha_n$, we can find in $\beta_1,..\beta_n$ in $\Theta_n$ so that $\beta_n>\beta_{n-1}>\cdots >\beta_1>\nu$. Then
$$
\|x_\nu-x_\mu\|\le\|\frac1n\sum_{j=1}^nx_{\beta_j}-
\frac1n\sum_{j=1}^nx_{\alpha_j}\|\le\psi_g(\psi_f(\frac2n))
$$
\noindent Thus
$$
\theta(\mu):=\sup_{\sigma>\mu}\|x_\sigma-x_\mu\|\le\psi_g(\psi_f(\frac2n)), \quad \quad \mu>\alpha_n.
$$

\noindent Applying this for every $n$, since
$\lim_{n\to\infty}\psi_g(\psi_f(\frac2n))=0$, we get
$\theta(\mu)=0$ eventually, which is not true.

For $\ell_\infty/c_0$, Theorem \ref{kalton} follows from the case
of $C[0, \omega_1]$ and from the result that $C[0, \omega_1]$ is
linearly isometric to a subspace of $\ell_\infty/c_0$ \cite{Par}.

\end{proof}

Note that Theorem \ref{kalton} implies that there is no quasi-additive Lipschitz projection from $\ell_\infty$ onto $c_0$ (see \cite{BL}). As another application of Theorem \ref{kalton}, Kalton obtains the
following fundamental example.

\begin{Thm}\label{kalton3} There is a (nonseparable) Banach space $Z$ that is not
a uniform retract of its second dual.
\end{Thm}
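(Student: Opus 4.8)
The plan is to reduce the statement to Theorem~\ref{kalton} via the principle that a uniform retraction of $Z$ onto itself inside $Z^{**}$ forces the separation present in the quotient $Z^{**}/Z$ to be recorded faithfully inside $Z^{**}$. Suppose $r\colon Z^{**}\to Z$ is a uniformly continuous retraction, so $r|_{Z}=\mathrm{id}_{Z}$, and set $g=\mathrm{id}_{Z^{**}}-r$. Then $g$ is uniformly continuous, $g|_{Z}=0$, and, writing $q\colon Z^{**}\to Z^{**}/Z$ for the quotient map, one has $q\circ g=q$ because $r$ takes values in $Z=\ker q$. Consequently, for any $a,b\in Z^{**}$ one gets the lower bound $\|g(a)-g(b)\|\ge\|q(a)-q(b)\|_{Z^{**}/Z}$ together with an upper bound governed by the modulus of uniform continuity of $g$. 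Thus $g$ is a uniformly continuous ``quasi-lift'' of the identity of the quotient, and composing with a uniform embedding $j\colon Z^{**}\hookrightarrow\ell_\infty$ transports quotient separation into genuine $\ell_\infty$-separation.

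Next I would fix the target space so that the bidual is harmless while the quotient is heavy. I would seek a nonseparable $Z$ whose bidual $Z^{**}$ has density character at most $c$ and carries an unconditional basis, so that Theorem~\ref{unc} supplies the required Lipschitz---hence uniform---embedding $j\colon Z^{**}\hookrightarrow\ell_\infty$, while simultaneously $Z^{**}/Z$ contains an isometric copy of $C[0,\omega_1]$. Since $C[0,\omega_1]$ embeds isometrically into $\ell_\infty/c_0$ \cite{Par}, it suffices to arrange $Z^{**}/Z\supseteq C[0,\omega_1]$. A Johnson--Lindenstrauss-type construction is the natural candidate; the delicate point is to keep $\mathrm{dens}(Z^{**})\le c$ --- which rules out the naive choice $Z=c_0(\Gamma)$, whose bidual $\ell_\infty(\Gamma)$ is already too large --- while the canonical quotient stays big enough to absorb $C[0,\omega_1]$. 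I would also demand that the construction provide bounded lifts $x_\mu\in Z^{**}$ of the generators $\chi_{[0,\mu]}$ of $C[0,\omega_1]\subseteq Z^{**}/Z$, with $\|x_\mu\|\le M$ and with the telescoping property that $\tfrac1n\sum_{j=1}^{n}(x_{\beta_j}-x_{\alpha_j})$ is $Z^{**}$-small whenever $\alpha,\beta$ interlace.

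With such a $Z$ in hand I would run the interlacing-graph argument of Theorem~\ref{kalton} directly on the lifts. Consider $F_n\colon\Omega_n\to\ell_\infty$ defined by $F_n(\alpha)=j\bigl(g\bigl(\tfrac1n\sum_{j=1}^{n}x_{\alpha_j}\bigr)\bigr)$. When $\alpha$ and $\beta$ interlace the inputs are close in $Z^{**}$ by the lift control, so uniform continuity of $g$ and $j$ makes $F_n$ Lipschitz with a small constant, and Proposition~\ref{kalton1} furnishes an uncountable $\Theta_n$ on which $F_n$ is almost constant. On the other hand $q\circ g=q$ gives the lower bound tying the $\ell_\infty$-distances of the $F_n$-values to the quotient distances of $\tfrac1n\sum\chi_{[0,\alpha_j]}$, so near-constancy of $F_n$ forces these $C[0,\omega_1]$-combinations to be almost equal. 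Telescoping exactly as in the proof of Theorem~\ref{kalton} then drives $\theta(\mu)=\sup_{\sigma>\mu}\bigl\|\chi_{[0,\sigma]}-\chi_{[0,\mu]}\bigr\|_{Z^{**}/Z}$ to $0$ eventually, contradicting the fact that these generators remain separated. Hence no uniform retraction can exist.

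The hard part will be twofold. The structural difficulty is that $g=\mathrm{id}-r$ is not constant on the cosets of $Z$, so it is not literally a map on $Z^{**}/Z$; rather than trying to descend it --- which would require a quasi-additivity estimate $\|g(w+z)-g(w)\|\le\mathrm{const}$ for $z\in Z$, precisely the obstruction behind the nonexistence of a quasi-additive Lipschitz projection $\ell_\infty\to c_0$ (see \cite{BL}) --- I keep all the nonlinearity inside the already-understood graph machinery and use only the identity $q\circ g=q$. The remaining, genuinely delicate, difficulty is the construction of $Z$ itself: one must reconcile $\mathrm{dens}(Z^{**})\le c$ (so Theorem~\ref{unc} applies) with a quotient large enough to contain $C[0,\omega_1]$, and produce the bounded, interlacing-compatible lifts $x_\mu$ of its generators. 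I expect this design of $Z$, rather than the reduction to Theorem~\ref{kalton}, to be the main obstacle.
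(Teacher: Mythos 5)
The decisive problem is that your reduction uses only two properties of $g=\mathrm{id}_{Z^{**}}-r$: that $g$ is uniformly continuous and that $q\circ g=q$. Both are also satisfied by $g=\mathrm{id}_{Z^{**}}$, so the retraction hypothesis is never actually engaged, and if your scheme closed it would equally yield a contradiction with no retraction in sight. Concretely: with the data you demand --- lifts $x_\mu\in Z^{**}$ with $\|x_\mu\|\le M$, the telescoping bound $\|\frac1n\sum_{j=1}^n(x_{\beta_j}-x_{\alpha_j})\|_{Z^{**}}=O(1/n)$ for interlacing $\alpha,\beta$, and a Lipschitz embedding $j\colon Z^{**}\to\ell_\infty$ (which, per the paper's definitions and Theorem \ref{unc}, is bi-Lipschitz onto its range) --- the map $\alpha\mapsto j\bigl(\frac1n\sum_j x_{\alpha_j}\bigr)$ is Lipschitz on $(\Omega_n,d)$ with constant $O(1/n)$; Proposition \ref{kalton1} gives an uncountable $\Theta_n$ on which its values are $O(1/n)$-close; pulling back through $j^{-1}$ and pushing down by $q$ makes the quotient averages $\frac1n\sum_j\chi_{[0,\alpha_j]}$ pairwise $O(1/n)$-close on $\Theta_n^{[n]}$; and the telescoping step of Theorem \ref{kalton} then forces $\|\chi_{[0,\nu]}-\chi_{[0,\mu]}\|_{Z^{**}/Z}\le O(1/n)<1$, contradicting the isometric $1$-separation of the generators. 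This runs with $g=\mathrm{id}$, i.e.\ with no retraction whatsoever. Hence the space you posit cannot exist: the ``main obstacle'' you defer (the design of $Z$) is not merely delicate but impossible, so the proposal cannot be completed along these lines. (Separately, asking that $Z^{**}$ have an unconditional basis is itself close to self-defeating: a nonreflexive dual space with an unconditional basis cannot contain $c_0$, since a dual containing $c_0$ contains $\ell_\infty$, which embeds into no space with an unconditional basis; but the inconsistency above is decisive regardless.)

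The paper's proof exploits exactly the feature your reduction discards: a retraction takes values \emph{in} $Z$, which the identity cannot simulate. Kalton uses Benyamini's space $Z=(\sum_n Y_n)_{c_0}$, where $Y_n$ is $\ell_\infty$ renormed by $\|y\|_{Y_n}=\max\{\frac1n\|y\|_{\ell_\infty},\|Qy\|_{\ell_\infty/c_0}\}$ with $Q\colon\ell_\infty\to\ell_\infty/c_0$ the quotient map. A uniform retraction of $B_{Z^{**}}$ onto $B_Z$ yields equi-uniformly continuous retractions $g_n\colon Y_n^{**}\to Y_n$. Taking a $\frac1n$-net $D_n$ of $B_{\ell_\infty/c_0}$ and a selection $h_n\colon D_n\to Y_n$ of $Q$ (which is $2$-Lipschitz for the degenerate $Y_n$-norm), one extends $h_n$ to a $2$-Lipschitz $f_n\colon B_{\ell_\infty/c_0}\to B_{Y_n^{**}}$ because $B_{Y_n^{**}}$ is a $1$-absolute Lipschitz retract; then $g_n\circ f_n$ is an approximate uniformly continuous selection of $Q$, which Lemma \ref{kalton4} upgrades, for $n$ large, to an exact uniformly continuous selection --- that is, a uniform embedding of $B_{\ell_\infty/c_0}$ into $Y_n\simeq\ell_\infty$, contradicting Theorem \ref{kalton}. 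Note that the retraction is used precisely to produce points $g_n(f_n(x))\in Y_n\subset Z$-type fibers above approximately correct cosets, which is information the identity map does not carry; your instinct that Theorem \ref{kalton} should be the terminal contradiction is right, but the route to it must pass through this selection mechanism rather than through $g=\mathrm{id}-r$ and $q\circ g=q$.
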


Before starting on discussing the ideas in the proof, let us
include the following useful lemma of independent interest.

\begin{Lem}\label{kalton4} Let $X$ be a Banach space and let $Q: Y\to X$ be a
quotient mapping. In  order that there is a uniformly continuous selection $f: B_X\to Y$ of
the quotient mapping $Q$, it is sufficient that for some $0<\lambda<1$ there is a uniformly
continuous map $\phi:S_X\to Y$ with $\|Q(\phi(x))-x\|\le\lambda$ for $x\in S_X$.
\end{Lem}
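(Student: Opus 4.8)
The plan is to upgrade the approximate selection $\phi$, defined only on the sphere, to an exact selection on the ball by a geometric correction scheme, after first making $\phi$ positively homogeneous so that its error scales linearly with the norm. Recall that $Q$ is a norm-one quotient map, so it is $1$-Lipschitz. First I would note that $\phi$ is bounded on $S_X$: since $S_X$ is a bounded, uniformly chain-connected metric space, a uniformly continuous map has bounded range, say $\|\phi(u)\|\le C$ for $u\in S_X$. Define $g:X\to Y$ by $g(0)=0$ and $g(z)=\|z\|\,\phi(z/\|z\|)$ for $z\neq 0$. Then $g$ is positively homogeneous of degree one, $\|g(z)\|\le C\|z\|$, and
\[
\|Qg(z)-z\|=\|z\|\,\bigl\|Q\phi(z/\|z\|)-z/\|z\|\bigr\|\le\lambda\|z\|
\]
for every $z$, by the hypothesis on $\phi$. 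I would then check that $g$ is uniformly continuous on $B_X$: writing $\omega_\phi$ for the modulus of $\phi$, for $z,z'$ away from the origin one splits $g(z)-g(z')$ into a radial part, bounded by $C\,|\,\|z\|-\|z'\|\,|\le C\|z-z'\|$, and an angular part, bounded by $\omega_\phi(\|z/\|z\|-z'/\|z'\|\|)$; while for $z,z'$ both of small norm one uses the crude estimate $\|g(z)-g(z')\|\le C(\|z\|+\|z'\|)$.

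With $g$ in hand I would iterate. Set $f_0=g$ and, writing $r_n(x)=x-Qf_n(x)$ for the residual, define $f_{n+1}(x)=f_n(x)+g(r_n(x))$. The key identity is $r_{n+1}(x)=r_n(x)-Qg(r_n(x))$, which together with the displayed estimate gives $\|r_{n+1}(x)\|\le\lambda\|r_n(x)\|$ and hence, by induction, $\|r_n(x)\|\le\lambda^{\,n+1}\|x\|$. In particular, for $x\in B_X$ every argument $r_n(x)$ lies in $\lambda^{\,n+1}B_X\subset B_X$, so $g$ is only ever evaluated on $B_X$, where it is uniformly continuous by the previous paragraph.

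Since $\|g(r_n(x))\|\le C\|r_n(x)\|\le C\lambda^{\,n+1}\|x\|$ and $\lambda<1$, the series $f(x)=g(x)+\sum_{n\ge 0}g(r_n(x))$ converges, with $\|f(x)-f_n(x)\|\le C\lambda^{\,n+1}/(1-\lambda)$ \emph{uniformly} in $x\in B_X$. Because $Qf_n(x)=x-r_n(x)\to x$, the limit satisfies $Qf(x)=x$ on $B_X$, so $f$ is a genuine selection of $Q$. Finally each $f_n$ is uniformly continuous on $B_X$, being a finite sum of compositions $g\circ r_k$ of uniformly continuous maps whose ranges lie in $B_X$ (the $r_k$ are uniformly continuous since $r_0=\mathrm{id}-Q\circ g$ and $r_{k+1}=r_k-Q\circ g\circ r_k$, and $Q$ is Lipschitz). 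As $f$ is the uniform limit of the $f_n$, it is uniformly continuous, which completes the argument.

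The only delicate point I expect is the uniform continuity of the homogeneous extension $g$ near the origin: the radial rescaling $z\mapsto\|z\|\,\phi(z/\|z\|)$ has an angular factor whose modulus degrades like $1/\|z\|$, so one must genuinely combine the angular estimate with the small-norm estimate to produce a single modulus valid on all of $B_X$. Once this is settled, the geometric decay $\lambda^{\,n+1}$ makes both the convergence of the series and the passage of uniform continuity to the limit essentially automatic.
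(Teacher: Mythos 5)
Your proof is correct and is essentially the paper's argument: your residuals $r_n$ are exactly the iterates $g^{n+1}$ of the paper's map $g(x)=x-Q\phi(x)$ applied to the positively homogeneous extension, so your series $f=g+\sum_n g(r_n)$ coincides with the paper's $f(x)=\sum_{n\ge 0}\phi(g^n(x))$. The only difference is that you spell out details the paper leaves implicit (boundedness of $\phi$ on $S_X$ and the uniform continuity of the homogeneous extension near the origin), which is a welcome but not structural addition.
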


\begin{proof} We extend $\phi$ to $B_X$ to be positively homogeneous and
$\phi$ remains uniformly continuous. Define $g(x)=x-Q(\phi(x))$, so
that $g$ is also positively homogeneous. Then
$\|g(x)\|\le\lambda\|x\|$ and so $\|g^n(x)\|\le\lambda^n\|x\|^n$
for $x\in B_X$. Let $g^0(x)=x$. Let
$$
f(x)=\sum_{n=0}^\infty\phi(g^n(x)).
$$
The series converges uniformly in $x\in B_X$ and so $f$ is
uniformly continuous. Furthermore,
$$
Qf(x)=\sum_{n=0}^\infty(g^n(x)-g^{n+1}(x))=x.
$$
\end{proof}

\begin{proof}[Sketch of the proof of Theorem \ref{kalton3}]

The space $Z$ that we shall consider was constructed by Benyamini in \cite{Ben}. Consider the quotient map
$Q:\ell_\infty\to\ell_\infty/c_{0}$. For each $n$, pick a maximal
set $D_n$ in the interior of $B_{\ell_\infty/c_{0}}$ so that
$\|x-x'\|\ge\frac1n$ for $x, x'\in D_n$ and $x\not= x'$. Then for
each $n$ define a map $h_n: D_n\to B_{\ell_\infty}$ with $Qh_n(x)=
x$ for $x\in D_n$ and denote by $Y_n$ the space $\ell_\infty$ with
the equivalent norm
$$
\|y\|_{Y_n}=\max\big\{\frac1n \|y\|_{\ell_\infty}, \|Qy\|_{\ell_{\infty/c_{0}}}\big\}.
$$

\noindent Note that $Q$ remains a quotient map for the usual norm
on $\ell_{\infty}/c_0$.

Let $Z=(\sum Y_n)_{c_{0}}$ and assume that there is a uniformly
continuous retraction of $B_{Z^{**}}$ onto $B_Z$. Then it follows
that there is a sequence of retractions $g_n:Y^{**}\to Y_n$ which
is equi--uniformly continuous, i.e their moduli of uniform
continuity satisfy
$$
\psi_{g_n}(t)\le\psi(t) \quad 0<t\le 2,\ \ {\rm with}\ \  \lim_{t\to 0}\psi(t)=0.$$
Consider the map $h_n: D_n\to Y_n$. If $x\not= x'\in D_n$, then

$$
\|h_n(x)-h_n(x')\|\le\max\big\{\frac2n, \|x-x'\|\big\}\le 2\|x-x'\|.
$$

\noindent Since $B_{Y_n^{**}}$ is a 1-absolute Lipschitz retract,
there is an extension $f_n: B_{\ell_\infty/c_{0}}\to B_{Y_n^{**}}$
of $h_n$ with $\hbox{Lip}(f_n)\le 2$. Now, if $x\in
B_{\ell_\infty/c_{0}}$, there is $x'\in D_n$ with
$\|x-x'\|<\frac2n$. Thus

$$
\|g_n(f_n(x))-g_n(f_n(x'))\|\le\psi(\frac4n)
$$

\noindent and hence

$$
\|Q(g_n(f_n(x))-x\|\le \psi(\frac4n)+\frac2n.
$$

\noindent Then for $n$ large enough, we have
$$
\psi(\frac4n)+\frac2n<1.
$$

By Lemma \ref{kalton4}, this means that there is a uniformly
continuous selection of the quotient map $Q:
B_{\ell_\infty/c_{0}}\to Y_n$. Thus $B_{\ell_\infty/c_{0}}$
uniformly embeds into $Y_n$ which is isomorphic to $\ell_\infty$
and this is a contradiction with Theorem \ref{kalton}.

\end{proof}

\noindent {\bf Problem 10.} Does there exist a separable, or at least a weakly compactly generated (WCG) Banach space $X$ which is not a Lipschitz retract of its bidual?
\bigskip

We note that if $X$ is Lipschitz embedded in $\ell_\infty$, then
$X$ admits a countable separating family of Lipschitz real valued
functions on $X$. Let us also recall that Bourgain proved in \cite{Bo0} that $\ell_\infty/c_0$ has no equivalent strictly convex norm. This somehow suggests the following problem.
\bigskip

\noindent {\bf Problem 11.} If $X$ is Lipschitz embeddable into
$\ell_\infty$, does $X$ have an equivalent strictly convex norm?
\bigskip

We will finish this section by discussing a few more examples of
nonisomorphic nonseparable spaces that have unique Lipschitz
structure. We will discuss one way of getting many examples by using
the so called pull-back construction in the theory of exact
sequences (see \cite{CCKY}). First, we need some preparation.

A diagram $0\to Y \to X \to Z \to 0$ of Banach spaces and
operators is said to be an exact sequence if the kernel of each
arrow coincides with the image of the preceding one. Hence, in
the diagram above, the second arrow denotes an injection and the
third one is a quotient map. This means, by the open mapping
theorem, that $Y$ is isomorphic to a closed subspace of $X$ and
that the corresponding quotient is isomorphic to $Z$. We say
that $X$ is a twisted sum of $Y$ and $Z$ or an extension of $Y$ by
$Z$.

We say that the exact sequence {\it splits} if the second arrow
$i$ admits a linear retraction (i.e. an arrow $r$ from $X$ into $Y$ so
that $ri=Id_Y$) or equivalently if the third arrow $q$ admits a
linear section, or selection, i.e. if  there is an arrow $s$ from $Z$
into $X$ such that $qs=Id_Z$. This implies that
then $X$ is isomorphic to the direct sum $Y\oplus Z$.

Let $A: U\to Z$ and $B:V\to Z$ be two operators. The {\it
pull-back} of $\{A, B\}$ is the space $PB=\{(u, v): Au=Bv\}\subset
U\times V$ considered with the canonical projections of $U\times
V$ onto $U$ and $V$ respectively.

If $0\to Y\to X\to Z\to 0$ is an exact sequence with quotient map
$q$ and $T:V\to Z$ is an operator and $PB$ denotes the pull-back
of of the couple $\{q, T\}$, then the diagram

$$
\begin{CD}
0 @>>> Y @>>> X @>>> Z @>>>0 \\
@. @.  @ AAA @ AT AA @. \\
0 @>>> Y @>>> PB  @>>> V @>>> 0
\end{CD}
$$
is commutative with exact rows. It follows that the pull-back
sequence splits if and only if the operator $T$ can be lifted to
$X$, i.e. there exists an operator $\tau: V\to X$ such that
$q\tau=T$.

Kalton's  Lemma  \cite{K3} says that if the quotient map in the
first row admits a Lipschitz section then so does the quotient
map in the second row.

We now consider the following pull-back diagram.

$$
\begin{CD}
0 @>>> c_0 @>>> JL_\infty @>>> c_0(I) @>>>0 \\
@. @AI{c_0}AA  @ A\hat{T}AA @ AT AA @. \\
0 @>>> c_0 @>>> JL_2  @>>> \ell_2(I) @>>> 0 \\
@. @AI{c_0}AA   @ A\hat{S}AA @ASAA @.\\
0 @>>> c_0 @>>> CC @>>> \ell_\infty @>>> 0
\end{CD}
$$

\noindent In this diagram, the operator $T$ is the inclusion map
and the operator $S$ is the Rosenthal quotient map \cite{Ros}.
Therefore we get that the space $JL_2$ of Johnson and
Lindenstrauss is an example of a non WCG space that it is
Lipschitz isomorphic to $\ell_2(I)\oplus c_0$ and Lipschitz embeds
into $\ell_\infty$. However, it is known that it does not linearly
isomorphically embeds into $\ell_\infty$ (see \cite{JL}).

The space $CC$ in the third row is then an example of a space that
is Lipschitz homeomorphic to $\ell_\infty\oplus c_0$, but is not
linearly isomorphic to it. Note also that $CC$ is linearly
isomorphic to a subspace of $\ell_\infty$ because $JL_\infty$ linearly embeds into $\ell_\infty$ and $CC$ is a subspace of $JL_\infty\oplus \ell_\infty$ (see \cite{CC} for details).

Similarly, Kalton obtained for instance that any nonseparable WCG
space that contains an isomorphic copy of $c_0$ fails to have
unique Lipschitz structure.

\medskip
\noindent{\bf Problem 12.} Does every reflexive (superreflexive)
space have unique Lipschitz structure?

\medskip

\noindent{\bf Problem 13.} Does $\ell_\infty$ have unique Lipschitz
structure?

\medskip\noindent {\bf Remark:} It turns out that the undecidability of the continuum hypothesis (and of related
statements) casts a shadow on the Lipschitz classification of non-separable spaces. Let $I$ be a set of
cardinality $c$. We assume that the
space $c_{0}(I)$ is Lipschitz- isomorphic to a WCG Banach space $X$. Does it follow that $X$ is
linearly isomorphic to   $c_{0}(I)$? It follows from \cite{GKL} that the answer is positive if
$c<\aleph_{\omega_0}$, where $\aleph_{\omega_0}$ denotes the $\omega_0$-th cardinal.
On the other hand, it follows from \cite{BMar} and \cite{Mar} that the answer is negative if $c >\aleph_{\omega_0}$ (see also \cite{ACGJM}
for a related result on the failure of a non-separable Sobczyk theorem at the density $\aleph_{\omega_0}$).
Since (ZFC) does not decide if $c$ is strictly less or strictly more than $\aleph_{\omega_0}$,
the above question is undecidable in (ZFC).

\section{Coarse embeddings into Banach spaces and geometric group
theory}

Two topological spaces $M$ and $N$ are homotopically equivalent if there exist
continuous maps $f: M\rightarrow N$ and $g: N\rightarrow M$ such that
$f\circ g$ and $g\circ f$ are both homotopic to the identity map on the space
on which they operate. For instance, any topological {\it vector} space is
homotopically equivalent to a point (in other words, is contractible).

We consider the case where $M$ and $N$ are real compact manifolds. In order to
understand their geometry, it is of course important to find quantities which are invariant
under homotopy equivalence. A basic example is the signature of the manifold, which can
be defined as follows when the dimension $n=4k$ is divisible by 4. If $d_j$ denotes the
exterior derivative acting on the differential forms of degree $j$, then $F_k=Ker (d_{2k})$ is the vector space
of closed forms of degree $2k$ and $E_k=Im(d_{2k-1})$ is its subspace of exact forms of degree $2k$.
If $\omega_1$ and $\omega_2$ are two forms in $F_k$ and if we define $Q$ by

$$Q(\omega_1, \omega_2)=\int_M \omega_1\wedge\omega_2$$

\noindent then $Q$ is a bilinear symmetric form, and an easy computation shows that the value of $Q$
depends only upon the classes of $\omega_1$ and $\omega_2$ in the finite-dimensional quotient space $H_{k}(M)=F_k/E_k$. Therefore $Q$ defines a quadratic form on $H_{k}(M)$, and its signature is called the signature of the manifold $M$. This signature is invariant under homotopy equivalence. Actually, a theorem of Novikov asserts (for simply connected manifolds) that the signature is the only homotopy invariant which can be computed in terms of quantities called Pontryagin polynomials.

We now recall the basics of geometric group theory. Let $G$ be a finitely generated group, and $S$ be a finite set generating $G$. We can equip $G$ with the word distance $d_S$ associated to $S$, as follows: if $\Vert g\Vert_S$ is the minimal length of a word written with elements of $S$ and $S^{-1}$ representing $g$, then we define the left-invariant distance $d_S$ on $G$ by $d_S(g_1, g_2)=\Vert g_1^{-1}g_2\Vert_S$. If for instance $G$ is the group $\Zdb^n$ and $S$ is its generating set consisting of the unit vector basis, then $d_S$ coincide with the distance induced by the $\ell_1$ norm on ${\Rdb}^n$. It is easily checked that if $S$ and $S'$ are two finite generating sets, then the identity map is a coarse Lipschitz isomorphism between the metric spaces $(G, d_S)$ and $(G, d_{S'})$.
A property of finitely generated groups is said to be {\sl geometric} if it depends only upon the space $(G, d_S)$ up to coarse Lipschitz equivalence, Many natural properties of groups (such as amenability, hyperbolicity, or being virtually nilpotent, i.e. containing a nilpotent subgroup of finite index) turn out to be geometric.

We denote again by $M$ a real compact manifold. Novikov's conjecture asserts that certain ``higher signatures" are homotopy invariants. We would have to introduce
several highly non-trivial concepts before providing a precise statement of this conjecture and this
is not the purpose of this survey. However it is easy to describe the link between Novikov's conjecture and geometric group theory. Indeed, let $\pi_1(M)$ be the first homotopy group of the manifold $M$. Mikhail Gromov conjectured a link between the geometry of the finitely generated group $\pi_1(M)$ and the Novikov conjecture, and this conjecture was confirmed by Yu: Novikov's conjecture (and even the stronger coarse Baum-Connes conjecture) holds true if the
group $\pi_1(M)$ equipped with the word distance coarsely embeds into the Hilbert space \cite{Y}. This important result was later generalized by Kasparov and Yu \cite{KY} who showed that the Hilbert space can be replaced in this statement by any super-reflexive space. This is indeed a generalization since the spaces $\ell_p$ with $p>2$ do not coarsely embed into the Hilbert space \cite{JR}. Note that conversely, it is an open question to know if the separable Hilbert space coarsely embeds into every infinite-dimensional Banach space, in other words if coarse embedding into $\ell_2$ is the strongest possible property of that kind. A word of warning is needed here: what we call ``coarse embedding" is often called (after Gromov) ``uniform embedding" in the context of differential geometry. In this survey however, the word uniform bears another meaning.

Let us recall that a metric space $E$ is called locally finite if every ball of $E$ is finite, and it has {\sl bounded geometry} if for any $r>0$, the cardinality of subsets of diameter less than $r$ is uniformly bounded. The left invariance of the distance $d_S$ shows that any finitely generated group has bounded geometry. Therefore the question occurs to decide which finitely generated group, and more generally which space with bounded geometry coarsely embeds into a super-reflexive space. For instance, could it be that every space with bounded geometry coarsely embeds into a super-reflexive space?

This question is now negatively answered. A first example of a locally finite space which does not coarsely embed into the Hilbert space is obtained in \cite{DGLY}, using in particular a construction of Enflo \cite{En}. Then Gromov shows \cite{Gr} through a random approach the existence of finitely generated groups $G$ such that the metric space $(G, d_S)$ coarsely contains a sequence of expanders $E_i$ - that is, a sequence of graphs such that the first positive eigenvalue of the Laplacian is uniformly bounded below - such that the girth of $E_i$, namely the length of the shortest closed curve, increases to infinity. As shown in \cite{LLR}, embeddings of expanders into the Hilbert space have maximal distortion. It follows that $G$ cannot be coarsely embedded into a super-reflexive space, and since such a group can be realized as an homotopy group (see \cite{Gr2}) it cuts short hopes to prove the full Novikov conjecture through the coarse embedding approach (see also \cite{HLS}).

On the other hand, the problem remains open to decide which metric spaces coarsely embed into Banach spaces of given regularity. Any metric space with bounded geometry coarsely embeds into a reflexive space \cite{BG}. This result is widely generalized in \cite{K0} where it is shown that every stable metric space (where ``stable" means that the order of limits can be permuted in $\lim_k \lim_n d(x_k, y_n)$ each time all limits exist) can be coarsely embedded into a reflexive space. This is indeed extending \cite{BG} since every metric space whose balls are compact, and thus every locally finite metric space, is stable. Moreover, it follows from Theorem \ref{BaLa} that any locally finite metric space Lipschitz embeds into the following very simple reflexive space: $(\sum_{n=1}^\infty \ell_\infty^n)_{\ell_2}$ (note that this space is both AUC and AUS). On the other hand, Theorem \ref{coarse} states that $c_0$ does not coarsely embed into a reflexive space, nor into a stable metric space (by the above, or directly by \cite{Ray}). Note that the important stable Banach space $L^1$ coarsely embeds into the Hilbert space \cite{JR},\cite{Ran}.

As seen before, coarse embeddings of special graphs bear important consequences on the non-linear geometry of Banach spaces. Sequences of expanders shed light on the geometry of groups and its applications to homotopy invariants. It is plausible that such expanders could provide several interesting examples in geometry of Banach spaces.

\medskip
For the record, we recall the

\medskip\noindent {\bf Problem 14.} Does $\ell_2$ coarsely embed into every infinite dimensional Banach space?

\section{Lipschitz-free spaces and their applications}

Let $M$ be a pointed metric space, that is, a metric space equipped with a
distinguished point denoted $0$. The space $\Lip_0(M)$ is the space of real-valued
Lipschitz functions on $M$ which vanish at $0$. Let $\mathcal{F}(M)$ be the natural predual of $\Lip_0(M)$, whose
$w^*$-topology coincide on the unit ball of $\Lip_0(M)$ with the
pointwise convergence on $M$.  The Dirac map $\delta : M \to \mathcal{F}
 (M)$ defined by $\langle g, \delta(x)\rangle =g(x)$ is an isometric
embedding from $M$ to a subset of $\mathcal{F}(M)$ which
generates a dense linear subspace. This predual $\mathcal{F}(M)$ is called in
\cite{GK} the {\sl Lipschitz-free space over $M$}. When $M$ is separable,
$\mathcal{F}(M)$ is separable as well since $\delta(M)$ spans a dense subspace.
Although Lipschitz-free spaces over separable metric spaces constitute a class of
separable Banach spaces which are easy to define, the structure of these spaces
is very poorly understood to this day. Improving our understanding of this class
is a fascinating research program. Note that if we identify (through the Dirac map)
a metric space $M$ with a subset of $\mathcal{F}(M)$, any Lipschitz map from $M$ to a
metric space $N$ extends to a continuous linear map from $\mathcal{F}(M)$ to
$\mathcal{F}(N)$. So Lipschitz maps become linear, but of course the complexity is shifted
from the map to the free space: this may explain why the structure of Lipschitz-free spaces
is not easy to analyze. A first example is provided by the real line, whose free space is isometric
to $L_1$. Actually, metric spaces $M$ whose free space is isometric to a subspace of $L_1$ are characterized in \cite{God} as subsets of metric trees equipped with the least path metric. On the other hand, the free space of the plane $\Rdb^2$ does not embed isomorphically into $L^1$ \cite{NS}.

Banach spaces $X$ are in particular pointed metric spaces (pick the origin as distinguished point)
and we can apply the previous construction. Note that the isometric embedding
$\delta:X\to \mathcal{F}(X)$ is of course non linear
since there exist Lipschitz functions on $X$ which are not affine.

This Dirac map has a linear left
inverse $\beta: \mathcal{F}(X) \to X$ which is the quotient map such
that $x^*(\beta(\mu))= \langle x^*, \mu \rangle$ for all $x^* \in X^*$; in
other words, $\beta$ is the extension to $\mathcal{F}(X)$ of the
barycenter map.  This setting provides canonical examples of
Lipschitz-isomorphic spaces.  Indeed, if we let $Z_X= Ker (\beta)$,
it follows easily from $\beta \delta = Id_X$ that $Z_X \oplus X =
\mathcal{G}(X)$ is Lipschitz-isomorphic to $\mathcal{F}(X)$.

Following \cite{GK}, let us say that a Banach space $X$ has the {\it
lifting property} if there is a continuous linear map $R:X\to \mathcal{F}(X)$ such
that $\beta R=Id_X$, or equivalently, if for $Y$ and $Z$ Banach spaces and
$S: Z\rightarrow Y$ and $T:X\rightarrow Y$ continuous linear maps, the existence of a
Lipschitz map $\mathcal{L}$ such that $T=S \mathcal{L}$ implies the
existence of a continuous linear operator $L$ such that $T=SL$.  A
diagram-chasing argument shows that $\mathcal{G}(X)$ is linearly isomorphic to
$\mathcal{F}(X)$ if and only if $X$ has the lifting property \cite{GK}.
It turns out that all non-separable reflexive spaces, and also the
spaces $\ell_\infty (\mathbb{N})$ and $c_0(\Gamma)$ when $\Gamma$ is
uncountable, fail the lifting property and this provides canonical
examples of pairs of Lipschitz but not linearly isomorphic spaces.

On the other hand, the following result is proved in \cite{GK}:

\begin{Thm}\label{lifting}: Every separable Banach space $X$ has the lifting
property.

\end{Thm}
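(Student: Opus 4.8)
The plan is to pass to the dual side, where the desired section of the barycenter map becomes a bounded linear \emph{projection}, and then to build that projection by differentiating Lipschitz functions and averaging their derivatives against a Gaussian measure. The point is that a section $R$ of $\beta$ is nonlinear to find directly (the obvious candidate $\delta$ is an isometric but nonlinear section), whereas its adjoint is a very concrete object.

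First I would record the dual reformulation. Since $\mathcal F(X)^*=\Lip_0(X)$ and the adjoint $\beta^*:X^*\to\Lip_0(X)$ is nothing but the inclusion of $X^*$ into $\Lip_0(X)$ (a continuous linear functional is Lipschitz and vanishes at $0$, and $\beta^*x^*=x^*$ as a function), it suffices to produce a bounded linear projection $P:\Lip_0(X)\to X^*$ onto this copy of $X^*$. Indeed, set $R:=P^*|_X$; then $P^*:X^{**}\to\Lip_0(X)^*=\mathcal F(X)$, so $R$ maps $X\subset X^{**}$ into $\mathcal F(X)$, and for $x\in X$, $x^*\in X^*$ one computes
$$\langle x^*,\beta R x\rangle=\langle\beta^*x^*,P^*x\rangle=\langle P(\beta^*x^*),x\rangle=\langle P(x^*),x\rangle=\langle x^*,x\rangle,$$
using $\beta^*x^*=x^*$ and the projection property $P(x^*)=x^*$. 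As this holds for all $x^*$, we get $\beta R=\mathrm{Id}_X$, so the whole problem reduces to constructing $P$.

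To build $P$, I would fix a nondegenerate Gaussian probability measure $\mu$ on $X$; such measures exist precisely because $X$ is separable. The crucial input is Aronszajn's theorem: every real-valued Lipschitz function on a separable Banach space is G\^{a}teaux differentiable outside a Gauss-null set, hence $\mu$-almost everywhere. For $f\in\Lip_0(X)$ I would then define
$$\langle P(f),x\rangle=\int_X\langle Df(\omega),x\rangle\,d\mu(\omega),$$
where $Df(\omega)\in X^*$ is the G\^{a}teaux derivative, defined for $\mu$-a.e.\ $\omega$ with $\|Df(\omega)\|\le\mathrm{Lip}(f)$. The integrand is an a.e.\ pointwise limit of difference quotients of the continuous function $f$, hence measurable, and it is bounded by $\mathrm{Lip}(f)\|x\|$, hence integrable. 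Where the derivative exists the map $x\mapsto\langle Df(\omega),x\rangle$ is linear, so after integration $x\mapsto\langle P(f),x\rangle$ is linear and bounded by $\mathrm{Lip}(f)$; thus $P(f)\in X^*$ and $\|P\|\le1$. Finally $P$ is clearly linear in $f$, and if $f=x^*\in X^*$ then $Df\equiv x^*$ gives $P(x^*)=x^*$, so $P$ is a norm-one projection of $\Lip_0(X)$ onto $X^*$. This even yields $\|R\|\le1$, the isometric form of the lifting property.

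The hard part will be the differentiability input: everything after the reformulation is formal duality, but the linearity of the averaged derivative rests on genuine a.e.\ G\^{a}teaux differentiability (additivity of the directional derivatives at almost every point), which is exactly Aronszajn's deep theorem and is where separability is indispensable. A secondary technical point to check carefully is that the averaged functional lands in $X^*$ rather than merely in $X^{***}$ — this is handled by defining $P(f)$ directly as a bounded functional on $X$ — and that the weak measurability and domination needed for the integral are in place, both of which are routine once the differentiability theorem is granted.
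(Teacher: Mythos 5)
Your reduction to a projection $P:\Lip_0(X)\to X^*$ breaks at the duality step. The identification ``$\Lip_0(X)^*=\mathcal{F}(X)$'' is false: $\mathcal{F}(X)$ is the \emph{predual} of $\Lip_0(X)$, so $\Lip_0(X)^*=\mathcal{F}(X)^{**}$, and free spaces are essentially never reflexive (already $\mathcal{F}(\Rdb)$ is isometric to $L_1$). Consequently $P^*$ maps $X^{**}$ into $\mathcal{F}(X)^{**}$, and your construction only yields $R:X\to\mathcal{F}(X)^{**}$ with $\beta^{**}R=\mathrm{Id}_X$ --- a lifting into the bidual of the free space, which is not the lifting property. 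To conclude you would need $f\mapsto\langle P f,x\rangle=\int_X\langle Df(\omega),x\rangle\,d\mu(\omega)$ to be weak$^*$-continuous (by Krein--\v{S}mulian it suffices that it be continuous on $B_{\Lip_0(X)}$ for pointwise convergence), and this is precisely the crux of the theorem, not a formality: pointwise convergence of uniformly Lipschitz functions gives no pointwise control of their derivatives. There is also no cheap way back from $\mathcal{F}(X)^{**}$ to $\mathcal{F}(X)$: whether even $\mathcal{F}(\ell_1)$ is complemented in its bidual is recorded as an open problem in this very paper (Problem 16). The ``secondary technical point'' you flag (that $P(f)$ lies in $X^*$) is the harmless one; the one you do not address is $P^*x\in\mathcal{F}(X)$ versus $\mathcal{F}(X)^{**}$, and that is where all the real work lives.

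The paper's two proofs are engineered exactly to avoid this issue: they build $R(x)$ \emph{inside} $\mathcal{F}(X)$ from the start, as a norm limit or Bochner integral of differences of Dirac measures --- $R=(\delta*\gamma)'(0)$ for a Gaussian $\gamma$ with dense support, or, in the cube-measure proof, $R(x_n)=\int_{H_n}\bigl(\delta(x_n+\sum_{j\neq n}t_jx_j)-\delta(\sum_{j\neq n}t_jx_j)\bigr)\,d\lambda_n(t)$, automatically an element of $\mathcal{F}(X)$ since the integrand is a bounded continuous $\mathcal{F}(X)$-valued function --- and derivative-averaging enters only in proving the norm estimate $\|R\|\le 1$, via Fubini and approximation of an arbitrary $f\in\Lip_0(X)$ by G\^{a}teaux-differentiable functions with controlled Lipschitz constant. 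Your scheme is repairable, but it requires a genuinely new ingredient you never invoke: for $x$ in the Cameron--Martin space $H_\mu$, Gaussian integration by parts gives $\int\langle Df,x\rangle\,d\mu=\int f\,\hat{x}\,d\mu$ with $\hat{x}\in L_2(\mu)$, whence $P^*x=\int \hat{x}(\omega)\,\delta(\omega)\,d\mu(\omega)\in\mathcal{F}(X)$ as a Bochner integral, with $\|P^*x\|\le\|x\|$ by your duality estimate; one then extends $R$ from the dense subspace $H_\mu$ to $X$. As written, the proposal has a genuine gap at its central step.
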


 \begin{proof} We will actually give two proofs. First, one can pick a Gaussian measure $\gamma$
whose support is dense in $X$ and use the result that $(\delta
* \gamma)$ is G{\^{a}}teaux-differentiable.  Then in the above notation
$R=(\delta * \gamma)' (0)$ satisfies $Id_X =\beta R$.

The second proof is essentially self-contained. It consists into replacing the Gaussian measure by a cube measure, and this will be useful later.
It underlines the simple fact that being separable is equivalent to being ``compact-generated". Again, we use differentiation, but only in the directions which are normal to the faces of the cube.

Let $(x_i)_{i\geq 1}$ be a linearly independent sequence of vectors in $X$
such that $$\overline{span}\,[(x_i)_{i\geq 1}]=X$$ and $\Vert x_i\Vert=2^{-i}$ for all $i$.
Let $H=[0,1]^{\Ndb}$ be the Hilbert cube and $H_n=[0,1]^{{\Ndb}_n}$ be the copy of the
Hilbert cube where the factor of rank $n$ is omitted; that is, ${\Ndb}_n={\Ndb}\backslash\{n\}$.
We denote by $\lambda$ (resp. $\lambda_n$) the natural probability measure on $H$ (resp. $H_n$)
obtained by taking the product of the Lebesgue measure on each factor.

We denote $E={span}\,[(x_i)_{i\geq 1}]$  and $R:E\to \mathcal{F}(X)$
the unique linear map which satisfies for all $n\geq 1$ et all $f\in Lip_0(X)$

$$R(x_n)(f)=\int_{H_n}\big[f(x_n+\displaystyle\sum_{j=1, j\not=n}^{\infty}t_j x_j)-f(\displaystyle\sum_{j=1, j\not=n}^{\infty}t_j x_j)\big]d\lambda_n(t)$$

\vskip 5 mm
Pick $f\in \Lip_0(X)$. If the function $f$ is  G{\^{a}}teaux-differentiable, Fubini's theorem shows that for all $x\in E$

$$R(x)(f)=\int_{H}<\{\nabla f\}(\displaystyle\sum_{j=1}^{\infty}t_j x_j), x> d\lambda(t) $$

Thus $\vert R(x)(f)\vert\leq\Vert x\Vert\,\Vert f\Vert_L$ in this case. But since $X$ is separable, any  $f\in \Lip_0(X)$ is a uniform limit of a sequence $(f_j)$ of G{\^{a}}teaux-differentiable functions such that $\Vert f_j\Vert_L\leq\Vert f \Vert_L$. It follows that $$\Vert R\Vert\leq 1.$$

\vskip 5 mm
We may now extend $R$ to a linear map $\overline{R}: X\to \mathcal{F}(X)$ such that $\Vert\overline{R}\Vert=1$ and it is clear that $\overline{R}(x)(x^*)=x^*(x)$ for all $x\in X$ and all $x^*\in X^*$. Hence $\beta \overline{R}=Id_X$.

\end{proof}

The above proof follows \cite{GK}. We refer to \cite{G1} for an elementary approach along the same lines, which uses only finite-dimensional arguments and is accessible at the undergraduate level.

The lifting property for separable spaces forbids the existence of a {\it separable} Banach space $X$ such that $\cal F(X)$ and $\cal G(X)$ are not linearly isomorphic, but on the other hand it
shows that if there exists an isometric
embedding from a separable Banach space $X$ into a Banach space $Y$, then $Y$ contains a
linear subspace which is isometric to $X$.
Indeed a theorem due to Figiel \cite{F} states that if $J:X \to Y$ is an
isometric embedding  such that $J(0)=0$ and $\overline{span}
[J(X)]=Y$ then there is a linear quotient map with $\| Q\|=1$ and
$QJ = Id_X$, and then the lifting property provides a linear contractive map $R$
such that $QR=Id_X$, and this map $R$ is a linear isometric embedding.  We note
that $P=RQ$ is a contractive projection from $Y$ onto $R(X)$. This remark is developed
further in \cite{G2} where it is shown that the existence of
a non-linear isometric embedding from $X$ into $Y$ is a very restrictive condition on
the couple $(X,Y)$.

Nigel Kalton constructed the proper frame for showing the gap which separates
H\"{o}lder maps from Lipschitz ones \cite{K-1}.  If $(X, \| \ \|)$ is a Banach space
and $\omega:[0, + \infty) \to [0,+ \infty)$ is a subadditive function
such that $\lim_{t \to 0} \omega(t) = \omega(0) =0$ and $\omega(t) =t$ if $t \geq
1$, then the space $\Lip_\omega(X)$ of ($\omega \circ d$)-Lipschitz functions
on $X$ which vanish at $0$ has a natural predual denoted
$\mathcal{F}_\omega (X)$, and the barycentric map $\beta_\omega:\mathcal{F}_\omega
(X) \to X$ (whose adjoint is the canonical embedding from $X^*$ to
$\mathcal{F}_\omega(X)$) is still a linear quotient map such that
$\beta_\omega \delta = Id_X$.  However, the Dirac map $\delta: X \to
\mathcal{F}_\omega(X)$ is now uniformly continuous with modulus $\omega$ -
e.g. $\alpha$- H\"{o}lder when $\omega(t) = \max (t^\alpha, t)$ with
$0<\alpha<1$. Uniformly continuous functions fail the differentiability
properties that Lipschitz functions enjoy, and thus one can
expect that this part of the theory is more ``distant'' from the
linear theory than the Lipschitz one.  It is indeed so, and \cite[Theorem 4.6]{K-1},
 reads as follows.

\begin{Thm}\label{Holder}
If $\omega$ satisfies $\lim_{t\to 0} \frac{\omega(t)}{t}= \infty$, then
$\mathcal{F}_\omega (X)$ is a Schur space - that is, weakly convergent
sequences in $\mathcal{F}_\omega (X)$ are norm convergent.
\end{Thm}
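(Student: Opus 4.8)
The plan is to establish the Schur property in its contrapositive form: I will show that $\mathcal{F}_\omega(X)$ admits \emph{no} normalized weakly null sequence. This suffices, since if some weakly null sequence failed to be norm null, then after passing to a subsequence with $\|\mu_n\|\ge\varepsilon$ and normalizing one would obtain a normalized weakly null sequence. Thus the whole theorem reduces to the claim that every normalized weakly null sequence $(\mu_n)$ in $\mathcal{F}_\omega(X)$ has a subsequence equivalent to the unit vector basis of $\ell_1$. Granting the claim, that subsequence is a normalized $C$-$\ell_1$-basic sequence, so by Mazur's lemma every convex combination of its terms has norm at least $1/C$; being also weakly null, it would have $0$ in the norm closure of its convex hull, a contradiction. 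Hence no normalized weakly null sequence exists and $\mathcal{F}_\omega(X)$ is Schur.

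To prepare the extraction I would first record the two facts that drive everything. Writing $m_{x,y}=(\delta(x)-\delta(y))/\omega(\|x-y\|)$ for the normalized molecules, one has $\|\delta(x)-\delta(y)\|_{\mathcal{F}_\omega}=\omega(\|x-y\|)$ (the upper bound is the defining $\Lip_\omega$ inequality, and equality is realized by $z\mapsto\omega(\|z-y\|)-\omega(\|y\|)$), while the barycentric quotient map satisfies $\beta_\omega(m_{x,y})=(x-y)/\omega(\|x-y\|)$, of norm $\|x-y\|/\omega(\|x-y\|)$. The hypothesis $\lim_{t\to0}\omega(t)/t=\infty$ says precisely that this quantity tends to $0$ as $\|x-y\|\to0$: fine-scale molecules are geometrically negligible. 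I would also use that $x\mapsto[\,\omega(r)-\omega(\mathrm{dist}(x,A))\,]^+$ is a $1$-$\Lip_\omega$ function for every $A$ and $r>0$, by monotonicity and subadditivity of $\omega$; these truncated bumps are the test functions to be glued together.

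The core is the extraction. I would approximate each $\mu_n$ to within $2^{-n}$ by a finite molecule combination and split it, at a threshold $\delta$, into a coarse part (transport over distances $\ge\delta$) and a fine part (distances $<\delta$). For the coarse part the transport distances are bounded below, so weak nullity forces the supporting points to escape every fixed bounded, $\omega$-trackable region; a gliding-hump (or Ramsey) extraction then renders the coarse parts essentially disjointly supported, whence suitably placed truncated bumps witness $\langle f,\sum a_i\mu_{n_i}\rangle\ge c\sum|a_i|$, i.e.\ the coarse parts behave like an $\ell_1$-basis. For the fine part I would invoke the negligibility of $\|x-y\|/\omega(\|x-y\|)$ together with weak nullity to show that, along a further subsequence, the fine contributions either vanish in norm or likewise separate, so that combining the two regimes yields the $\ell_1$ lower estimate for $(\mu_{n_i})$.

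The step I expect to be the main obstacle is controlling the fine-scale \emph{overlapping} combinations, and this is exactly where the hypothesis is indispensable. Individual fine molecules still have norm one, so the hypothesis cannot be applied molecule-by-molecule; it must be used to rule out the sustained cancellation that, for the linear modulus $\omega=\mathrm{id}$, organizes fine molecules into a Haar-type system and makes $\mathcal{F}(\Rdb)=L_1$ fail the Schur property. Concretely, the delicate point is to bound, uniformly over the combination, the cross-interaction terms $f(x_i)-f(y_j)$ between the molecules of one index $n$ and the bumps adapted to another index $m$; the super-linearity $\omega(t)/t\to\infty$ is what decouples the scales and forces these cross terms to be negligible, preventing the $L_1$-type behaviour and producing the clean $\ell_1$ structure. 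Equivalently, the scale decomposition may be viewed as the constructive heart of an embedding of $\mathcal{F}_\omega(X)$ into an $\ell_1$-sum of Schur spaces indexed by dyadic scales, using that the Schur property passes to subspaces and to $\ell_1$-sums.
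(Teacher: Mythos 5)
You should first be aware that the paper contains no proof of Theorem \ref{Holder}: it is quoted verbatim from \cite[Theorem 4.6]{K-1}, where Kalton's argument runs through a decomposition of $\mathcal{F}_\omega$ along \emph{all} dyadic scales into an $\ell_1$-sum of free spaces over pieces that are uniformly discrete and bounded at their own scale (such pieces have $\ell_1$-like, in particular Schur, free spaces) --- that is, precisely the route you mention only parenthetically in your closing sentence. Your outer skeleton is fine: assuming a normalized weakly null sequence and extracting a subsequence with a uniform $\ell_1$-lower estimate does yield a contradiction via Mazur (no weakly null sequence can satisfy such an estimate), and you correctly locate where $\lim_{t\to 0}\omega(t)/t=\infty$ must act, namely in decoupling the scales. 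But the proposal has a genuine gap exactly there: the decisive cross-scale estimate is announced (``the super-linearity \dots forces these cross terms to be negligible'') rather than proved, and this estimate is the entire content of the theorem, since for the trivial gauge the identical scheme collapses onto $L_1=\mathcal{F}(\Rdb)$, which fails Schur.

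Worse, with the test functions you actually specify the decisive estimate is \emph{false}. The bump $g(x)=\big[\omega(r)-\omega(\mathrm{dist}(x,A))\big]^+$ is indeed $1$-$\Lip_\omega$, but it sees fine molecules straddling the boundary of $A$ at full strength: if $\mathrm{dist}(x,A)=0$ and $\mathrm{dist}(y,A)=\|x-y\|=t\le r$, then $\langle g,m_{x,y}\rangle=\omega(t)/\omega(t)=1$, independently of the gauge, so no decoupling occurs. The decoupling appears only for bumps that are \emph{classically} Lipschitz, e.g.\ $g=\frac{\omega(s)}{s}\min(\mathrm{dist}(\cdot,A),s)$, which is $2$-$\Lip_\omega$ because subadditivity and monotonicity of $\omega$ make $t/\omega(t)$ nondecreasing up to a factor $2$, and whose pairing with a scale-$t$ molecule is at most $\frac{\omega(s)}{s}\cdot\frac{t}{\omega(t)}$ --- a quantity that tends to $0$ as $t/s\to 0$ exactly when the gauge is nontrivial, and is identically $1$ for $\omega(t)=t$. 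Your toolkit never produces this ratio. Two further points: a single threshold $\delta$ is structurally insufficient, since the fine part can itself have norm one and contains infinitely many scales, so the coarse/fine difficulty recurs inside it; one needs all dyadic bands simultaneously with uniform constants, i.e.\ Kalton's $\ell_1$-sum embedding. And your assertion that ``weak nullity forces the supporting points to escape every fixed bounded region'' is unjustified and false as a principle: all $\mu_n$ may be supported in $B_X$ (the boundedly supported case, e.g.\ $\mathcal{F}_\omega([0,1])$, is the heart of the matter), and in infinite dimensions bounded sets are not totally bounded, so neither escape nor compactness is available without further argument. A smaller, fixable omission: for your coarse/fine split even to be bounded you must take near-optimal molecule representations with $\sum_j|a_j|\le\|\mu_n\|+2^{-n}$, which exist because $B_{\mathcal{F}_\omega(X)}$ is the closed convex hull of the normalized molecules.
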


It follows from Theorem \ref{Holder} that the uniform analogue of the lifting
property fails unless $X$ has the (quite restrictive) Schur
property.  Moreover, $\mathcal{F}_\omega (X)$ is $(3\omega)$-uniformly
homeomorphic to $[X \oplus Ker (\beta_\omega)]$ and as soon as $\lim_{t
\to 0} \frac{\omega(t)}{t} =0$ and $X$ fails the Schur property we obtain
canonical pairs of uniformly (even H\"{o}lder) homeomorphic separable Banach spaces
which are not linearly isomorphic.  We refer to \cite{R1,JLS} for
other examples of such pairs.

Along with H\"{o}lder maps between Banach spaces, one may as
well consider Lipschitz maps between quasi-Banach spaces, and this is
done in \cite{AK} where similar methods provide examples of separable
quasi-Banach spaces which are Lipschitz but not linearly isomorphic.

We now observe that the proof (with cube measures) of Theorem \ref{lifting} provides
the existence of compact metric spaces whose free space fails the approximation property (in short, A.P.).
This has been observed in \cite{GO}.

\begin{Thm}\label{Ozawa}
There exists a compact metric space $K$ whose free space $\mathcal{F}(K)$ fails the
approximation property.
\end{Thm}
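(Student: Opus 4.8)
The plan is to extract from the cube-measure proof of Theorem \ref{lifting} a \emph{compact} set that already carries the lifting, and then to feed a separable space without the approximation property into that construction. Concretely, I would show that for a suitable compact $K\subset X$ the lifting $R$ built above already maps $X$ into $\mathcal F(K)\subset\mathcal F(X)$, so that $X$ is linearly isomorphic to a complemented subspace of $\mathcal F(K)$. Since the approximation property passes to complemented subspaces, a space $X$ failing it will produce a free space $\mathcal F(K)$ failing it as well.

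First I would fix a \emph{separable} Banach space $X$ failing the approximation property (Enflo's example, or a subspace of $c_0$ or of $\ell_p$ with $p\neq 2$, all serve), together with a linearly independent sequence $(x_i)_{i\ge 1}$ with dense span and $\|x_i\|=2^{-i}$, exactly as in the second proof of Theorem \ref{lifting}. I set $\phi\colon H=[0,1]^{\mathbb N}\to X$ by $\phi(t)=\sum_{j\ge 1}t_jx_j$; the normalization makes the series uniformly convergent, so $\phi$ is continuous and $K:=\phi(H)$ is a norm-compact subset of $X$ containing the basepoint $0=\phi(0)$. Thus $\mathcal F(K)$ sits isometrically inside $\mathcal F(X)$ as the closed linear span of $\{\delta(k):k\in K\}$, and the barycenter map restricts to a norm-one quotient $\beta_K\colon\mathcal F(K)\to X$.

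The heart of the argument is the observation that $R$ takes its values in $\mathcal F(K)$. Indeed, in the defining formula
$$R(x_n)(f)=\int_{H_n}\Bigl[f\bigl(x_n+\sum_{j\neq n}t_jx_j\bigr)-f\bigl(\sum_{j\neq n}t_jx_j\bigr)\Bigr]\,d\lambda_n(t)$$
the two evaluation points are precisely $\phi(t)$ with its $n$-th coordinate set equal to $1$ and to $0$, respectively; hence both lie in $K$. Consequently $R(x_n)$ is the integral of the map $t\mapsto \delta\bigl(x_n+\sum_{j\neq n}t_jx_j\bigr)-\delta\bigl(\sum_{j\neq n}t_jx_j\bigr)$, which is continuous (since $\|\delta(u)-\delta(v)\|=\|u-v\|$) and takes its values in $\mathcal F(K)$. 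A continuous $\mathcal F(K)$-valued map on a finite measure space is Bochner integrable with integral in the closed subspace $\mathcal F(K)$, and that integral computes $R(x_n)$; by linearity and continuity $R(X)\subset\mathcal F(K)$. Because $\beta_K R=\mathrm{Id}_X$, the operator $P:=R\beta_K$ is a bounded idempotent on $\mathcal F(K)$ with range $R(X)\cong X$, so $X$ is isomorphic to a complemented subspace of $\mathcal F(K)$.

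It then remains only to invoke the permanence of the approximation property under passing to complemented subspaces: if $\mathcal F(K)$ had A.P., then $R(X)\cong X$ would too, contrary to our choice; hence $\mathcal F(K)$ fails A.P. The one genuinely delicate point is the middle step, namely checking that the weak$^*$-integral defining $R$ is in fact a Bochner integral valued in the \emph{closed} subspace $\mathcal F(K)$; everything else is formal once the compact generating set $K$ and the location of its cube endpoints have been identified.
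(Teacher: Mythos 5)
Your proposal is correct and takes essentially the same route as the paper's proof: both feed a separable space failing A.P.\ into the cube-measure proof of Theorem \ref{lifting}, observe that the lifting $R$ (hence $\overline{R}$) takes its values in $\mathcal{F}(K)$ for a compact set $K$ built from the cube, and conclude that $X$ is $1$-complemented in $\mathcal{F}(K)$ via the projection $\overline{R}\beta$, so that $\mathcal{F}(K)$ inherits the failure of A.P. The only differences are matters of detail: you take $K=\phi(H)$, the image of the Hilbert cube, where the evaluation points of the defining integrals visibly lie (the paper instead takes $K=2C$ with $C$ the closed convex hull of the $x_i$), and you spell out via Bochner integration the step the paper dismisses as ``easily seen.''
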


\begin{proof} We use the notation of the proof of Theorem \ref{lifting}. Let $C$ be the closed convex hull of the sequence $(x_i)_{i\geq 1}$, and let $K=2C$. It is easily seen that the map $R$ takes its values in the closed subspace $\mathcal{F}(K)$ of $\mathcal{F}(X)$, and so does $\overline{R}$. It follows that $X$ is isometric to a 1-complemented subspace of $\mathcal{F}(K)$, through the projection $\overline{R}Q$. If this construction is applied to a Banach space $X$ which fails A.P. , then $\mathcal{F}(K)$ fails A. P. as well since A. P. is carried to complemented subspaces.

\end{proof}

\noindent {\bf Problem 15.} Let $X$ be a separable Banach space, and $Y$ a Banach space which is Lipschitz-isomorphic to $X$. Does it follow that $Y$ is linearly isomorphic to $X$?

This question amounts to know if every separable Banach space is determined by its metric structure.
It is open for instance if $X=\ell_1$ or if $X=C(K)$ with $K$ a countable compact metric space, unless $C(K)$ is isomorphic to $c_0$. Note that by the above the answer to this question is negative if we drop the separability assumption, or if we replace Lipschitz by H\"{o}lder, or if we replace Banach by quasi-Banach.

\medskip\noindent {\bf Problem 16.} Is the Lipschitz-free space ${\mathcal F}(\ell_1)$ over $\ell_1$ complemented in its bidual?

A motivation for this question is that if ${\mathcal F}(\ell_1)$ is complemented in its bidual, it follows that every space $X$ which is Lipschitz-isomorphic to $\ell_1$ is complemented in its bidual, and then  \cite[Corollary 7.7]{BL} shows that $X$ is linearly isomorphic to $\ell_1$.

 \medskip\noindent {\bf Problem 17.} Theorem \ref{Ozawa} leads to the question of knowing for which compact spaces $K$ the space $\mathcal{F}(K)$ has A. P. or its metric version M. A. P. So far, very little is known on this topic, which is related with the existence of linear extension operators for Lipschitz functions (see \cite{Bor}, \cite{GO}).

 \medskip\noindent {\bf Problem 18.} Let $M$ be an arbitrary uniformly discrete metric space, that is, there exists $\theta>0$
such that $d(x,y)\geq\theta$ for all $x\not=y$ in $M$.
Does $\mathcal{F}(M)$ have the B. A. P? Note that A. P. holds by (\cite[Proposition 4.4]{K-1}).
A positive answer to this question would imply that every separable Banach space $X$
is approximable, that is, the identity $Id_X$ is the pointwise limit of an equi-uniformly
continuous sequence of maps with relatively compact range. By \cite[Theorem 4.6]{K4},
it is indeed so for $X$ and $X^*$ when $X^*$ is separable, and in particular every separable reflexive space is approximable. On the other hand, a negative answer to this question would provide an equivalent norm on $\ell_1$ failing M. A. P. and this would solve a famous
problem in approximation theory, by providing the first example of a dual space - namely, $\ell_{\infty}$ equipped with the corresponding dual norm - with A. P. (and even B. A. P.) but failing M. A. P.

\bigskip\noindent {\bf Aknowledgements.} The authors would like to thank F. Baudier, M. Kraus and V. Montesinos for useful discussions.

\end{document}